\theoremstyle{plain}
\newtheorem{theorem}{Theorem}[section]
\newtheorem{corollary}[theorem]{Corollary}
\newtheorem{lemma}[theorem]{Lemma}
\newtheorem{proposition}[theorem]{Proposition}
\newtheorem{theorem-definition}[theorem]{Theorem-Definition}
\theoremstyle{definition}
\newtheorem{definition}[theorem]{Definition}
\newtheorem{notation}[theorem]{Notation}
\newtheorem{convention}[theorem]{Convention}
\theoremstyle{remark}
\newtheorem{remark}[theorem]{Remark}
\numberwithin{equation}{section}
\newcommand{\N}{{\mathds{N}}}
\newcommand{\R}{{\mathds{R}}}
\newcommand{\C}{{\mathds{C}}}
\newcommand{\T}{{\mathds{T}}}
\newcommand{\A}{{\mathfrak{A}}}
\newcommand{\B}{{\mathfrak{B}}}
\newcommand{\Lip}{{\mathsf{L}}}
\newcommand{\qpropinquity}[1]{{\mathsf{\Lambda}_{#1}}}
\newcommand{\Kantorovich}[1]{{\mathsf{mk}_{#1}}}
\newcommand{\StateSpace}{{\mathscr{S}}}
\newcommand{\mongekant}{{Mon\-ge-Kan\-to\-ro\-vich metric}}
\newcommand{\unit}{1}
\newcommand{\sa}[1]{{\mathfrak{sa}\left({#1}\right)}}
\newcommand{\dom}[1]{{\operatorname*{dom}\left({#1}\right)}}
\newcommand{\diam}[2]{{\mathrm{diam}\left({#1},{#2}\right)}}
\newcommand{\bridgelength}[2]{{\lambda\left({#1}\middle|{#2}\right)}}
\newcommand{\alg}[1]{{\mathfrak{#1}}}
\renewcommand{\geq}{\geqslant}
\renewcommand{\leq}{\leqslant}
\newcommand{\Latremoliere}{Latr\'{e}moli\`{e}re}
\newcommand{\vast}{\bBigg@{4}}
\newcommand{\Vast}{\bBigg@{5}}
\begin{document}

\title[Standard homogeneous C*-algebras as compact quantum metric spaces]{Standard homogeneous C*-algebras as compact quantum metric spaces}
\author{Konrad Aguilar}
\address{School of Mathematical and Statistical Sciences \\ Arizona State University \\  901 S. Palm Walk, Tempe, AZ 85287-1804}
\email{konrad.aguilar@asu.edu}
\urladdr{}
\author{Tristan Bice}
\address{Mathematical Institute \\
Polish Academy of Sciences \\
ul. \'{S}niadeckich 8 \\
00-656 Warszawa \\
Poland}
\email{tristan.bice@gmail.com}
\urladdr{}

\date{\today}
\subjclass[2010]{Primary:  46L89, 46L30, 58B34.}
\keywords{Noncommutative metric geometry, Monge-Kantorovich distance, Quantum Metric Spaces, Lip-norms, circle algebras, homogeneous C*-algberas}
\thanks{The first author  was partially supported by the Simons - Foundation grant 346300 and the Polish Government MNiSW 2015-2019 matching fund, and this work is part of the project supported by the grant H2020-MSCA-RISE-2015-691246-QUANTUM DYNAMICS and the Polish Government grant  3542/H2020/2016/2.}
\thanks{Some of this work was completed during the Simons semester hosted at IMPAN during September-December 2016 titled ``Noncommutative Geometry - The Next Generation''.}

\begin{abstract}
Given a compact metric space X and a unital C*-algebra A, we introduce a family of seminorms on the C*-algebra of continuous functions from X to A, denoted C(X, A), induced by classical Lipschitz seminorms that produce compact quantum metrics in the sense of Rieffel if and only if A is finite-dimensional.  As a consequence, we are able isometrically embed X into the state space of C(X,A).  Furthermore, we are able to extend convergence of compact metric spaces in the Gromov-Hausdorff distance to convergence of spaces of matrices over continuous functions on the associated compact metric spaces in \Latremoliere's Gromov-Hausdorff propinquity.
\end{abstract}
\maketitle

\setcounter{tocdepth}{1}
\tableofcontents
\section{Introduction and Background}
Compact quantum metric spaces of  Rieffel \cite{Rieffel98a,Rieffel99,Rieffel05} (motivated by work of Connes \cite{Connes89, Connes})  provide a framework for the study of noncommutative metric geometry just as spectral triples provide for noncommutative differential geometry and C*-algebras provide for noncommutative topology.  This is done by way of quantum metrics induced by  seminorms on C*-algebras that serve as noncommutative analogues to the Lipschitz seminorm on the C*-algebra of complex-valued continuous functions on a compact metric space.    In this article, we will provide families of quantum metrics for the C*-algebra of continuous functions from a compact metric space to a finite dimensional C*-algebra. As an application, we show that  convergence of compact metric spaces in the Gromov-Hausdorff distance can be extended to convergence of spaces of {\em matrices} over continuous functions on the associated compact metric spaces in \Latremoliere's quantum Gromov-Hausdorff propinquity. It has been known since the introduction of Rieffel's quantum distance \cite[Proposition 4.7]{Rieffel00} in 2000 that  convergence of compact metric spaces in the Gromov-Hausdorff distance can be extended to convergence of spaces of continuous functions on the associated compact metric spaces by way of Rieffel's quantum distance (this also holds true for \Latremoliere's quantum Gromov-Hausdorff propinquity \cite[Theorem 6.6]{Latremoliere13}), and  thus, our work answers --- in the positive --- the question of whether this result extends to the matricial case.

We note of another motivation for our work in this article.  The quantum metrics we introduced will be quantum metrics for standard circle algebras, i.e.  $C(\T, \A)$, where $\T$ is the circle and $\A$ is a finite dimensional C*-algebra. Thus,  this article is partly motivated by work of the first author and F. \Latremoliere \ \cite{Aguilar-Latremoliere15, Aguilar16, Aguilar16b}, in which they brought AF algebras into the realm on noncommutative metric geometry by, in part, finding various quantum metrics for AF algebras.  Therefore, following the Elliott classification program \cite{Elliott93b}, a next natural step is to develop quantum metrics for limit circle algebras or A$\T$-algebras or inductive limits of circle algebras \cite[page 159]{Davidson}. But, as done in \cite{Aguilar-Latremoliere15}, it is quite beneficial to first place quantum metrics on the C*-algebras of the inductive sequence that build the inductive limit.  Thus, this article serves to provide a natural family of quantum metrics for circle algebras as a vital step for this pursuit.

Our construction will be based on the quantum metric induced by the Lipschitz seminorm on the C*-algebra of continuos functions on a compact metric space $\left(X, \mathsf{d}_X\right)$, denoted $C(X)$.  Indeed, it is a well-known result (likely due to L. Kantorovich) that the metric on the state space $\StateSpace(C(X))$ defined by:
\begin{equation*}
\Kantorovich{\Lip_{\mathsf{d}_X}}: (\mu, \nu) \in \StateSpace(C(X)) \times \StateSpace (C(X)) \longmapsto \sup_{\Lip_{\mathsf{d}_X}(f) \leq 1} \vert \mu(f)-\nu(f) \vert
\end{equation*}
metrizes the weak* topology of   $\StateSpace(C(X))$, where $\StateSpace(C(X))$ is the state space of $C(X)$ and: 
\begin{equation*}\Lip_{\mathsf{d}_X}(f)=\sup_{x,y\in X, x \neq y} \frac{\vert f(x)-f(y) \vert}{\mathsf{d}_X(x,y)}
\end{equation*} is the Lipschitz constant of $f$, which we call the Lipschitz seminorm.  Furthermore, the map:
\begin{equation*}
\Delta: x \in \left(X, \mathsf{d}_X\right)\longmapsto \delta_x \in  \left(\StateSpace(C(X)), \Kantorovich{\Lip_{\mathsf{d}_X}}\right),
\end{equation*}
 is an isometry onto its image, where $\delta_x: f \in C(X) \longmapsto f(x) \in \C$ is the Dirac point mass at $x$.  Of course, even when $X$ is only compact Hausdorff, this map is a homeomorhism onto its image.  Thus, the contribution of  $\Kantorovich{\Lip_{\mathsf{d}_X}}$ is that this homeomorphism is strengthened to an isometry in the case when $X$ is compact metric.
 (Although these  results about $C(X)$ are well-known, they are difficult to find in the literature, but our results in this paper will provide these results as a corollary, in particular, by Theorem (\ref{fd-thm}) and Corollary (\ref{main-cor})).  
 
 In this article, we claim that our quantum metrics for $C(X, \A)$, when $\A$ is finite-dimensional, will be a suitable generalizations of this setting on $C(X)=C(X, \C)$.  First, the family of seminorms we develop on $C(X, \A)$ will produce quantum metrics if and only if $\A$ is finite dimensional in Section (\ref{cqms-section}).  Next, for a particular choice of seminorm on $C(X, \A)$, we will respresent the above $C(X)$ structure on the unital C*-subalgebra of scalar-valued functions denoted $C(X, \C1_\A)$ where $1_\A$ is the identity of $\A$ (also in Section (\ref{cqms-section})).  Finally,  in Section (\ref{mk-metric-section}), an appropriate analogue to the above  map, $\Delta$, will capture the metric stucture of $X$ in the state space via the quantum metric by way of a bi-Lipschitz map for any of the seminorms we produce, and for particular natural choices of seminorms, the map will be an isometry just as $\Delta$ is in the classical case, and these cases will still be independent of $\A$ and $X$. Furthermore, although some of our main results rest on the finite-dimensionality of $\A$, we note that we are able to prove many crucial algebraic and analytic properties of our seminorms without the assumption of finite-dimensionality on $\A$. And, finally, in Section (\ref{converge-section}), we extend the convergence of compact metric spaces to a purely noncommutative setting by way of matrices over continuous function spaces.  Next, we provide some necessary background for the results of this paper.

\begin{notation}
Let $\A$ be a unital C*-algebra.  Denote the C*-norm of $\A$ by $\Vert \cdot \Vert_\A$ and the unit of $\A$ by $1_\A$.  If $\B$ is a C*-subalgebra of $\A$, then we use $\Vert \cdot \Vert_{\A/\B}$ to denote the quotient norm on $\A/\B$ induced by the C*-norm of $\A$.

Denote the self-adjoint elements of $\A$ by $\sa{\A}.$

Denote the state space of $\A$ by $\StateSpace(\A).$
\end{notation}

\begin{notation}
Let $\A$ be a C*-algebra. Let $\Lip$ be a seminorm defined on $\sa{\A}$.  Its domain is defined as $\dom{\Lip}= \{ a \in \sa{\A} : \Lip (a) < \infty\}$.

If $\Lip$ is defined on $\A$, then we denote $\dom{\Lip}_\A = \{a \in \A : \Lip(a) < \infty \}$ and $\dom{\Lip}=\dom{\Lip}_\A \cap \sa{\A}$.
\end{notation}
\begin{definition}[\cite{Rieffel98a,Rieffel99, Rieffel05}]\label{Monge-Kantorovich-def}
A {\em compact quantum  metric space} $(\A,\Lip)$ is an ordered pair where $\A$ is a unital C*-algebra with unit $1_\A$  and $\Lip$ is a seminorm over $\R$ defined on $\sa{\A}$  whose domain $\dom{\Lip}= \{ a \in \sa{\A} : \Lip (a) < \infty\}$ is a unital dense subspace of $\sa{\A}$ over $\R$ such that:
\begin{enumerate}
\item $\{ a \in \sa{\A} : \Lip(a) = 0 \} = \R\unit_\A$,
\item the \emph{\mongekant} defined, for all two states $\varphi, \psi \in \StateSpace(\A)$, by:
\begin{equation*}
\Kantorovich{\Lip} (\varphi, \psi) = \sup\left\{ |\varphi(a) - \psi(a)| : a\in\dom{\Lip}, \Lip(a) \leq 1 \right\}
\end{equation*}
metrizes the weak* topology of $\StateSpace(\A)$, and
\item the seminorm $\Lip$ is lower semi-continuous with respect to $\|\cdot\|_\A$.
\end{enumerate}
If $(\A,\Lip)$ is a  compact quantum metric space, then we call the seminorm $\Lip$ a {\em Lip-norm}.  
\end{definition}
\begin{remark}\label{mk-metric-dense-remark}
The density condition on $\dom{\Lip}$ in the above definition condition guarantees that the  map $\Kantorovich{\Lip}$ is a metric (possibly taking value $+\infty$) on $\StateSpace(\A)$, which follows by continuity and linearity of states and the fact that every element of a C*-algebra is a linear combination of self-adjoint elements.
\end{remark}

In Rieffel's pioneering work on  compact quantum metric spaces \cite{Rieffel98a}, certain equilavent conditions were given for the requirement that the Monge-Kantorovich metric   metrizes the weak* topology of the state space.  These conditions provide a useful tool for verifying this difficult property.  Further equivalences were given in \cite{Ozawa05}.  The following theorem summarizes all known characterizations of Lip-norms and the proof uses both Arzela-Ascoli theorem and the classical structure of $C(X)$ with $X$ compact metric and its associated Lipschitz seminorm, which, in part, explains the term "compact quantum metric space."

\begin{theorem}[\cite{Rieffel98a,Rieffel99, Ozawa05}]\label{Rieffel-thm}
Let $(\A,\Lip)$ be an ordered pair where $\A$ is unital C*-algebra and $\Lip$ is a lower semi-continuous seminorm defined on $\sa{\A}$ such that its domain $\dom{\Lip}=\{a \in \sa{\A} : \Lip(a) < \infty\}$ is a dense unital subspace  of $\sa{\A}$ and $\{ a \in \sa{\A} : \Lip(a) = 0 \} = \R\unit_\A$.  The following are equivalent:
\begin{enumerate}
\item $(\A,\Lip)$ is a  compact quantum metric space; 
\item the metric $\Kantorovich{\Lip}$ is bounded and there exists $r \in \R, r >0$ such that the set:
\begin{equation*}
\{a \in \dom{\Lip} : \Lip(a) \leq 1 \text{ and } \Vert a \Vert_\A \leq r \}
\end{equation*}
is totally bounded in $\A$ for $\Vert \cdot \Vert_\A $;
\item the set:
\begin{equation*} \{ a+\R1_\A \in \sa{\A}/\R1_\A : a \in \dom{\Lip}, \Lip(a) \leq 1 \}
\end{equation*} is totally bounded in $\sa{\A}/\R1_\A$ for $\Vert \cdot \Vert_{\sa{\A}/\R1_\A}$; 
\item there exists a state $\mu \in \StateSpace (\A)$ such that the set: 
\begin{equation*}
\{ a\in \dom{\Lip} : \Lip (a) \leq 1 \text{ and } \mu(a) = 0 \}
\end{equation*}
is totally bounded in $\A$ for $\Vert \cdot \Vert_\A $;
\item for all $\mu \in \StateSpace (\A)$ the set: 
\begin{equation*}
\{ a\in \dom{\Lip} : \Lip (a) \leq 1 \text{ and } \mu(a) = 0 \}
\end{equation*}
is totally bounded in $\A$ for $\Vert \cdot \Vert_\A $.
\end{enumerate}
\end{theorem}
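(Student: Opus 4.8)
The plan is to prove the cycle $(1)\Leftrightarrow(3)$, $(3)\Leftrightarrow(2)$, and $(3)\Leftrightarrow(5)\Rightarrow(4)\Rightarrow(3)$, with $(1)\Leftrightarrow(3)$ carrying all the analytic content and the rest reducing to translation-and-rescaling bookkeeping. Write $\ell_1=\{a\in\dom{\Lip}:\Lip(a)\le 1\}$. Two elementary observations are used throughout. First, for $a\in\sa{\A}$ the affine function $\widehat a\colon\varphi\mapsto\varphi(a)$ on $\StateSpace(\A)$ satisfies $\|\widehat a\|_\infty=\|a\|_\A$ (self-adjointness, via spectral calculus), so $a\mapsto\widehat a$ is an isometry of $\sa{\A}$ onto the affine functions in $C_\R(\StateSpace(\A))$ sending $1_\A$ to the constant function $1$; it therefore descends to an isometric embedding $\sa{\A}/\R 1_\A\hookrightarrow C_\R(\StateSpace(\A))/\R 1$. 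Second, $\sup_{\varphi,\psi\in\StateSpace(\A)}|\varphi(a)-\psi(a)|=2\,\|a+\R 1_\A\|_{\sa{\A}/\R 1_\A}$ for $a\in\sa{\A}$, so $\diam{\StateSpace(\A)}{\Kantorovich{\Lip}}=2\sup\{\|a+\R 1_\A\|_{\sa{\A}/\R 1_\A}:a\in\ell_1\}$; in particular $\Kantorovich{\Lip}$ is bounded precisely when the image of $\ell_1$ in $\sa{\A}/\R 1_\A$ is bounded, so no separate boundedness clause is needed alongside $(3)$.

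For $(1)\Leftrightarrow(3)$, I would first note that, regardless of the hypotheses, the weak$^\ast$ topology on $\StateSpace(\A)$ is always coarser than the $\Kantorovich{\Lip}$-topology: each $\varphi\mapsto\varphi(a)$ with $a\in\dom{\Lip}$ is $\Kantorovich{\Lip}$-Lipschitz with constant $\Lip(a)$, and a general $\varphi\mapsto\varphi(b)$ is a uniform limit of such (density of $\dom{\Lip}$ in $\sa{\A}$, together with $\A=\sa{\A}+i\sa{\A}$). Hence $(1)$ holds iff the identity $(\StateSpace(\A),\mathrm{w}^\ast)\to(\StateSpace(\A),\Kantorovich{\Lip})$ is continuous, which — since $\Kantorovich{\Lip}(\varphi,\psi)=\sup\{|\widehat a(\varphi)-\widehat a(\psi)|:a\in\ell_1\}$ — is exactly weak$^\ast$-equicontinuity of the family $\{\widehat a:a\in\ell_1\}$ on the weak$^\ast$-compact state space. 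Now if $(1)$ holds then $(\StateSpace(\A),\Kantorovich{\Lip})$ is a compact metric space (necessarily of finite diameter, the state space being convex, hence connected) on which each $\widehat a$, $a\in\ell_1$, is $1$-Lipschitz; by the classical Arzel\`{a}--Ascoli theorem — equivalently, by the classical structure of $C(X)$ for $X$ compact metric, under which the $1$-Lipschitz functions modulo constants form a totally bounded subset of $C_\R(X)$ — the set $\{\widehat a+\R 1:a\in\ell_1\}$ is totally bounded in $C_\R(\StateSpace(\A))/\R 1$, and pulling this back through the isometric embedding above yields $(3)$. Conversely, $(3)$ makes $\ell_1$ bounded modulo scalars, hence $\Kantorovich{\Lip}$ bounded; pushing $(3)$ forward through the embedding and normalizing representatives to vanish at a fixed state (which distorts norms by at most a factor $2$) shows $\{\widehat a:a\in\ell_1\}$ is totally bounded in $C_\R(\StateSpace(\A))$, hence equicontinuous for the weak$^\ast$ uniformity by Arzel\`{a}--Ascoli, i.e. $(1)$.

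For the remaining equivalences I would exploit homogeneity of $\Lip$: the set in $(2)$ is, up to a scalar dilation, independent of $r>0$, so total boundedness for one $r$ gives it for all. Given $(2)$, its boundedness clause lets me take $r$ at least the radius $\sup\{\|a+\R 1_\A\|_{\sa{\A}/\R 1_\A}:a\in\ell_1\}$; then each member of $\ell_1$ has a representative of norm at most $r$, so the set appearing in $(2)$ and $\ell_1$ have the same image in $\sa{\A}/\R 1_\A$, and total boundedness descends through the contractive quotient map to give $(3)$; conversely a finite net for $(3)$ lifts to representatives of bounded norm, and adjoining a finite net of the (bounded) scalar corrections yields $(2)$. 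For $(4)$ and $(5)$: for any fixed state $\mu$, the linear bijection $a\mapsto a+\R 1_\A$ from $\{a\in\sa{\A}:\mu(a)=0\}$ onto $\sa{\A}/\R 1_\A$ is $2$-bi-Lipschitz (if $\|a-t1_\A\|_\A<\varepsilon$ and $\mu(a)=0$ then $|t|=|\mu(a-t1_\A)|<\varepsilon$, so $\|a\|_\A<2\varepsilon$, while $\|a+\R 1_\A\|_{\sa{\A}/\R 1_\A}\le\|a\|_\A$ always), and it carries $\{a\in\dom{\Lip}:\Lip(a)\le1,\mu(a)=0\}$ onto the image of $\ell_1$ in $\sa{\A}/\R 1_\A$; since bi-Lipschitz bijections preserve total boundedness, the set in $(4)$--$(5)$ is totally bounded iff $(3)$ holds, for that $\mu$ and simultaneously for every $\mu$, whence $(5)\Rightarrow(4)$ is trivial, $(4)\Rightarrow(3)$ uses the bijection for the one state in $(4)$, and $(3)\Rightarrow(5)$ uses it for all $\mu$.

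The hard part will be $(1)\Leftrightarrow(3)$, and in particular the discipline of working modulo constants throughout the Arzel\`{a}--Ascoli argument: $\ell_1$ is never uniformly bounded (it contains $\R 1_\A$), so the compactness statement genuinely lives in the quotient $C_\R(\StateSpace(\A))/\R 1$, and the passage between ``$1$-Lipschitz for $\Kantorovich{\Lip}$'' and ``equicontinuous for the weak$^\ast$ uniformity'' only becomes available once boundedness of $\Kantorovich{\Lip}$ has been secured. The lower semicontinuity hypothesis, carried along in all five conditions via the standing assumptions, serves to upgrade the totally bounded sets produced above to norm-closed — hence compact — ones, as required to match Definition (\ref{Monge-Kantorovich-def}).
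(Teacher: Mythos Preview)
The paper does not supply its own proof of this theorem: it is stated as cited background from \cite{Rieffel98a,Rieffel99,Ozawa05}, with only the remark that ``the proof uses both Arzela--Ascoli theorem and the classical structure of $C(X)$ with $X$ compact metric and its associated Lipschitz seminorm.'' Your proposal is consistent with that description and is essentially the argument found in the cited sources: the analytic core is $(1)\Leftrightarrow(3)$ via Arzel\`a--Ascoli on $C_\R(\StateSpace(\A))$ through the Kadison isometry $a\mapsto\widehat a$, and the remaining equivalences are the bookkeeping you describe. The key identities you isolate --- $\|\widehat a\|_\infty=\|a\|_\A$ for $a\in\sa{\A}$, $\sup_{\varphi,\psi}|\varphi(a)-\psi(a)|=2\|a+\R1_\A\|_{\sa{\A}/\R1_\A}$, and the $2$-bi-Lipschitz identification of $\ker\mu\cap\sa{\A}$ with $\sa{\A}/\R1_\A$ --- are exactly the tools Rieffel and Ozawa use, and your homogeneity argument for the irrelevance of the particular $r$ in $(2)$ is correct.

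Two small remarks. First, in $(1)\Rightarrow(3)$ you invoke connectedness of $\StateSpace(\A)$ to get finite diameter, but this is unnecessary: any compact metric space has finite diameter. Second, your closing sentence about lower semicontinuity is slightly misplaced --- it is a standing hypothesis here, not something used in the proof of the equivalences themselves; it guarantees closedness of the Lip-ball (so ``totally bounded'' upgrades to ``compact'') but that upgrade is not needed to establish $(1)$--$(5)$ as stated. Neither point affects correctness.
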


\Latremoliere's quantum Gromov-Hausdorff propinquity \cite{Latremoliere13, Latremoliere13b, Latremoliere15} is a distance on the class of compact quantum metric spaces and serves as  noncommutative analogue to the Gromov-Hausdorff distance that has proven to be an especially profitable contribution to noncommutative metric geometry by expanding the possiblilities of continuous families of C*-algebras as well as extending the notion of finite-dimensional approximations  \cite{Latremoliere13c, Rieffel15} .  However, in order for propinquity to capture the C*-algebraic structure and have distance $0$ in propinquity to produce a *-isomorphism between the underlying C*-algebras, \Latremoliere\ had the insight  to remedy this by requiring the Lip-norms to have a multiplicative property like that of the Leibniz rule defined in the following Definition (\ref{quasi-Monge-Kantorovich-def}). In particular, the quantum Gromov-Hausdroff propinquity produces a distance on the class of compact quantum metric spaces of Definition (\ref{quasi-Monge-Kantorovich-def}) with this desirable distance $0$ property given, in part, by a *-isomorphism (see Theorem-Definition (\ref{def-thm},(5))).  Thus,  all Lip-norms in this paper are shown to satisfy the following definition.

\begin{definition}[{\cite{Latremoliere15}}]\label{quasi-Monge-Kantorovich-def}
A $(C,D)$-quasi-Leibniz compact quantum metric space $(\A,\Lip)$, for some $C\geq 1$ and $D\geq 0$, is  compact quantum metric space such that
the seminorm $\Lip$ is a \emph{$(C,D)$-quasi-Leibniz Lip-norm}, i.e. for all $a,b \in \dom{\Lip}$:
\begin{equation*}
\max\left\{ \Lip\left(a \circ b \right), \Lip\left(\{a,b\}\right) \right\} \leq C\left(\|a\|_\A \Lip(b) + \|b\|_\A\Lip(a)\right) + D \Lip(a)\Lip(b)\text{,}
\end{equation*}
where $a\circ b = \frac{ab+ba}{2}$ is the Jordan product and $\{a,b\}=\frac{ab-ba}{2i}$ is the Lie product.

When $C=1, D=0$, we call $\Lip$ a {\em Leibniz} Lip-norm.  When we do not specify $C$ and $D$, we call $(\A,\Lip)$ a {\em quasi-Leibniz  compact quantum metric space}.
\end{definition}

The following serves as a summary of results we will use in this paper involving the \Latremoliere's propinquity.
\begin{theorem-definition}[\cite{Latremoliere13, Latremoliere15}]\label{def-thm}
 Let $\mathrm{qLCQMS}$ be the class of all quasi-Leibniz compact quantum metric spaces. There exists a class function $\qpropinquity{}$ from $\mathrm{qLCQMS}\times \mathrm{qLCQMS}$ to $[0,\infty) \subseteq \R$ such that:
\begin{enumerate}
\item for any $(\A,\Lip_\A), (\B,\Lip_\B) \in \mathrm{qLCQMS}$ we have:
\begin{equation*}
 \qpropinquity{}((\A,\Lip_\A),(\B,\Lip_\B)) \leq \max\left\{\diam{\StateSpace(\A)}{\Kantorovich{\Lip_\A}}, \diam{\StateSpace(\B)}{\Kantorovich{\Lip_\B}}\right\}\text{,}
\end{equation*}
\item for any $(\A,\Lip_\A), (\B,\Lip_\B) \in \mathrm{qLCQMS}$ we have:
\begin{equation*}
0\leq \qpropinquity{}((\A,\Lip_\A),(\B,\Lip_\B)) = \qpropinquity{}((\B,\Lip_\B),(\A,\Lip_\A))
\end{equation*}
\item for any $(\A,\Lip_\A), (\B,\Lip_\B), (\alg{C},\Lip_{\alg{C}}) \in \mathrm{qLCQMS}$ we have:
\begin{equation*}
\qpropinquity{}((\A,\Lip_\A),(\alg{C},\Lip_{\alg{C}})) \leq \qpropinquity{}((\A,\Lip_\A),(\B,\Lip_\B)) + \qpropinquity{}((\B,\Lip_\B),(\alg{C},\Lip_{\alg{C}}))\text{,}
\end{equation*}
\item for all  for any $(\A,\Lip_\A), (\B,\Lip_\B) \in \mathrm{qLCQMS}$ and for any bridge $\gamma$ from $\A$ to $\B$ defined in \cite[Definition 3.6]{Latremoliere13}, we have:
\begin{equation*}
\qpropinquity{}((\A,\Lip_\A), (\B,\Lip_\B)) \leq \bridgelength{\gamma}{\Lip_\A,\Lip_\B}\text{,}
\end{equation*}
where $\bridgelength{\gamma}{\Lip_\A,\Lip_\B}$ is defined in \cite[Definition 3.17]{Latremoliere13},
\item for any $(\A,\Lip_\A), (\B,\Lip_\B) \in \mathrm{qLCQMS}$, we have:
\begin{equation*}
\qpropinquity{}((\A,\Lip_\A),(\B,\Lip_\B)) = 0
\end{equation*}
if and only if $(\A,\Lip_\A)$ and $(\B,\Lip_\B)$ are {\em quantum isometric}, i.e. if and only if there exists a *-isomorphism $\pi : \A \rightarrow\B$ with $\Lip_\B\circ\pi = \Lip_\A$, and as quantum isometry is an equivalence relation, we have that $\qpropinquity{}$ induces a metric on the class of equivalence classes up to quantum isometry of quasi-Leibniz compact quantum metric spaces, and
\item if $\Xi$ is a class function from $\mathrm{qLCQMS}\times \mathrm{qLCQMS}$ to $[0,\infty)$ which satisfies Properties {\em(2), (3)} and {\em(4)} above, then:
\begin{equation*}
\Xi((\A,\Lip_\A), (\B,\Lip_\B)) \leq \qpropinquity{}((\A,\Lip_\A),(\B,\Lip_\B))
\end{equation*}
 for all $(\A,\Lip_\A)$ and $(\B,\Lip_\B)$ in $\mathrm{qLCQMS}.$
\end{enumerate}
\end{theorem-definition}

Due to this Theorem-Definition, we may introduce the following convention.
\begin{convention}\label{equiv-class-convention}
Let $\mathrm{CMS}$ denote the class of all compact metric spaces. Let $\mathcal{A}$ be a subclass of $\mathrm{CMS}$, then  by $(\mathcal{A}, \mathrm{GH})$, we mean the class of all equivalence classes up to isometry of compact metric spaces topologized by the quotient topology induced by the Gromov-Hausdorff distance, $\mathrm{GH}$ \cite[Section 7.3]{burago01}, for which $\mathrm{GH}$ induces a metric on this quotient space.  And, when we let $(X,\mathsf{d}_X) \in (\mathcal{A}, \mathrm{GH}),$ we implicitly mean the  equivalence class of $(X,\mathsf{d}_X)$ with repsect to isometry. 

By $(\mathrm{qLCQMS}, \qpropinquity{})$, we mean the class of all equivalence classes up to quantum isometry of Theorem-Definition (\ref{def-thm}) topologized by the quotient topology induced the the quantum Gromov-Hausdorff propinquity, $\qpropinquity{}$.  And, when we take $(\A,\Lip) \in (\mathrm{qLCQMS}, \qpropinquity{}),$ we implicitly mean the equivalence class of $(\A,\Lip) $ with respect to quantum isometry. 
\end{convention}

We also have the following theorem that establishes the quantum Gromov-Hausdorff propinquity as a noncommutative analogue of the Gromov-Hausdorff distance on compact metric space.

\begin{theorem}[{\cite[Theorem 6.6, Corollary 6.4]{Latremoliere13} and \cite[Theorem 13.6]{Rieffel00}}]\label{comm-propinquity-thm}
If given compact metric spaces $(X, \mathsf{d}_X), (Y, \mathsf{d}_Y)$ and we let $\Lip_{\mathsf{d}_X}, \Lip_{\mathsf{d}_Y}$ denote their respective Lipschitz seminorms, then:
 \begin{equation*}
 \qpropinquity{} \left(\left(C(X), \Lip_{\mathsf{d}_X}\right), \left( C(Y), \Lip_{\mathsf{d}_Y}\right)\right) \leq \mathrm{GH}((X, \mathsf{d}_X), (Y, \mathsf{d}_Y)),
 \end{equation*}
 where $\mathrm{GH}$ is the Gromov-Hausdorff distance \cite[Section 7.3]{burago01}.
 
 Moreover, using Convention (\ref{equiv-class-convention}), the class map:
 \begin{equation*}
 \Gamma: (X, \mathsf{d}_X) \in (\mathrm{CMS}, \mathrm{GH}) \mapsto  \left(C(X), \Lip_{\mathsf{d}_X}\right) \in \left(\mathrm{qLCQMS}, \qpropinquity{}\right)
 \end{equation*}
 is a homeomorphism onto its image.
\end{theorem}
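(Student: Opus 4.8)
The statement has two halves, the inequality and the homeomorphism; the plan is to get the inequality from an explicit bridge, then deduce continuity and injectivity of $\Gamma$ essentially formally, and finally invoke \cite{Rieffel00} for continuity of the inverse, which is the one substantial point.

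\textbf{The inequality.} The idea is to produce, for every $\varepsilon>0$, a bridge from $C(X)$ to $C(Y)$ of length at most $\mathrm{GH}((X,\mathsf{d}_X),(Y,\mathsf{d}_Y))+\varepsilon$ and then apply Theorem-Definition (\ref{def-thm},(4)). First I would use the definition of $\mathrm{GH}$ to choose a metric $\mathsf{d}$ on $X\sqcup Y$ restricting to $\mathsf{d}_X$ on $X$ and to $\mathsf{d}_Y$ on $Y$ with $\Haus{\mathsf{d}}(X,Y)<r:=\mathrm{GH}((X,\mathsf{d}_X),(Y,\mathsf{d}_Y))+\varepsilon$, and set $W:=\{(x,y)\in X\times Y:\mathsf{d}(x,y)\leq r\}$. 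This $W$ is compact, and since $\Haus{\mathsf{d}}(X,Y)<r$ the coordinate projections $p\colon W\to X$ and $q\colon W\to Y$ are surjective. I would then take the bridge $\gamma:=(C(W),\,1_{C(W)},\,f\mapsto f\circ p,\,g\mapsto g\circ q)$, with the unit of $C(W)$ as pivot. Since the pivot is the unit, the states of $C(W)$ relevant to the height form all of $\StateSpace(C(W))$, and the states of $C(X)$ obtained by composing these with $f\mapsto f\circ p$ form a weak* closed convex subset of $\StateSpace(C(X))$ containing every Dirac mass (because $p$ is surjective), hence all of $\StateSpace(C(X))$; likewise for $q$, so the height of $\gamma$ is $0$. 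For the reach I would invoke McShane's extension theorem: given a real-valued $a\in C(X)$ with $\Lip_{\mathsf{d}_X}(a)\leq1$, extend it to $\bar a\colon X\sqcup Y\to\R$ with $\Lip_{\mathsf{d}}(\bar a)\leq1$; then $b:=\bar a|_Y$ satisfies $\Lip_{\mathsf{d}_Y}(b)\leq1$ and $\sup_{(x,y)\in W}|a(x)-b(y)|=\sup_{(x,y)\in W}|\bar a(x)-\bar a(y)|\leq r$, and symmetrically, so the reach is at most $r$. Hence $\bridgelength{\gamma}{\Lip_{\mathsf{d}_X},\Lip_{\mathsf{d}_Y}}\leq r$, Theorem-Definition (\ref{def-thm},(4)) gives $\qpropinquity{}(\Gamma(X,\mathsf{d}_X),\Gamma(Y,\mathsf{d}_Y))\leq r$, and letting $\varepsilon\downarrow0$ proves the inequality.

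\textbf{Well-definedness, continuity, injectivity.} An isometry $g\colon X\to X'$ of compact metric spaces induces a *-isomorphism $f\mapsto f\circ g$ intertwining the Lipschitz seminorms, so $\Gamma$ descends to isometry classes by Theorem-Definition (\ref{def-thm},(5)), and the inequality just proved makes it $1$-Lipschitz, hence continuous. For injectivity I would assume $\qpropinquity{}(\Gamma(X,\mathsf{d}_X),\Gamma(Y,\mathsf{d}_Y))=0$ and use Theorem-Definition (\ref{def-thm},(5)) to obtain a *-isomorphism $\pi\colon C(X)\to C(Y)$ with $\Lip_{\mathsf{d}_Y}\circ\pi=\Lip_{\mathsf{d}_X}$; by Gelfand duality $\pi(f)=f\circ h$ for a homeomorphism $h\colon Y\to X$, so the dual map $\varphi\mapsto\varphi\circ\pi$ on state spaces carries $\delta_y$ to $\delta_{h(y)}$ and, thanks to the intertwining of Lip-norms, is an isometry for the associated Monge-Kantorovich metrics. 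Since for any compact metric space one has $\Kantorovich{\Lip_{\mathsf{d}_X}}(\delta_x,\delta_{x'})=\mathsf{d}_X(x,x')$ --- the bound ``$\leq$'' is the Lipschitz estimate, and equality is realized by the function $z\mapsto\mathsf{d}_X(z,x)$ --- evaluating on Dirac masses forces $h$ to be an isometry of $(Y,\mathsf{d}_Y)$ onto $(X,\mathsf{d}_X)$. Thus $\Gamma$ is a continuous injection on the quotient, and it only remains to see that it is an embedding.

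\textbf{Continuity of $\Gamma^{-1}$: the main obstacle.} What is left is a reverse estimate ensuring that $\mathrm{GH}((X,\mathsf{d}_X),(Y,\mathsf{d}_Y))\to0$ whenever $\qpropinquity{}(\Gamma(X,\mathsf{d}_X),\Gamma(Y,\mathsf{d}_Y))\to0$. I would deduce this from \cite[Theorem~13.6]{Rieffel00}, which bounds $\mathrm{GH}$ between compact metric spaces in terms of the quantum Gromov-Hausdorff distance between the associated algebras $C(X)$, combined with the fact that $\qpropinquity{}$ dominates that distance --- the latter is symmetric, satisfies the triangle inequality, and is bounded above by bridge lengths, so Theorem-Definition (\ref{def-thm},(6)) applies. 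The substance of that argument is the reconstruction of an admissible metric on $X\sqcup Y$ from a small-length bridge --- equivalently, from an admissible Lip-norm on $C(X)\oplus C(Y)\cong C(X\sqcup Y)$ whose induced state-space Hausdorff distance is small --- using that the relevant identifications preserve pure states, so that the near-coupling of the ambient state spaces restricts to a near-isometric coupling of the Dirac masses, i.e. of $X$ and $Y$ themselves (once more via $\Kantorovich{\Lip_{\mathsf{d}_X}}(\delta_x,\delta_{x'})=\mathsf{d}_X(x,x')$). I expect the genuine difficulty to lie exactly here: controlling the metric distortion on the point sets $X$ and $Y$, not merely on the large state spaces surrounding them. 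Granting this estimate, $\Gamma$ is a homeomorphism onto its image.
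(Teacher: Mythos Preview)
Your proposal is correct. The paper does not give its own proof of this theorem --- it is quoted as background from \cite{Latremoliere13} and \cite{Rieffel00} --- but your bridge construction for the inequality is precisely the strategy the paper later adopts and generalizes in the proof of Theorem~(\ref{conv-lip-conv-thm}) for the matricial case (same $W\subseteq X\times Y$, same use of the unit as pivot giving height $0$, same appeal to McShane for the reach), and your treatment of well-definedness, injectivity, and continuity of $\Gamma^{-1}$ via \cite[Theorem~13.6]{Rieffel00} together with Theorem-Definition~(\ref{def-thm},(6)) is exactly what the cited references do.
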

A main goal of this paper is to generalize the continuity of $\Gamma$ in this theorem to matrix-valued continuous functions.

\section{Quantum metrics on standard homogeneous C*-algebras}\label{cqms-section}

Given a compact metric space $\left(X, \mathsf{d}_X \right)$ and a finite-dimensional C*-algebra $\A$, the task of equipping $C(X, \A)$ with a Lip-norm may at first seem obvious since we could define the quantity:
\begin{equation*}
L(a)=\sup_{x , y \in X, x \neq y} \frac{\Vert a(x) - a(y) \Vert_\A}{\mathsf{d}_X(x,y)} \text{ for  all $a \in C(X,\A)$}.
\end{equation*}
 However, an immediate issue with this quantity is that the kernel of $L$ is  not $\C1_{C(X,\A)}$ if $\dim(\A)>1.$  Thus, we would immediately not  satisfy the definition of the Lip-norm and Theorem (\ref{Rieffel-thm}) (the theorem that , in part, motivates the term "compact quantum metric space") would be unavailable to us.  Hence, in this section, we present various remedies to this deficit by coupling the quantity $L$ with other quantities and choosing different norms on $\A$ for the quantity in the numerator of $L$ to make a Lip-norm. We will discuss the advantages of each construction and, in the process, provide new Lip-norms on $C(X)$ itself. 
 
 First, we explicitly define what we mean by a standard homogeneous C*-algebra with a remark afterward explaining this definition.
 \begin{definition}\label{standard-h-def}
 A unital separable C*-algebra $\B$ is a {\em standard homogeneous C*-algebra} if there exists a compact metric space $(X, \mathsf{d}_X)$  and a finite-dimensional C*-algebra $\A$ such that $\B=C(X,\A)$, which is the C*-algebra of continuous $\A$-valued functions on $X$ with point-wise algebraic and adjoint operations induced by $\A$,  supremum norm, and the unit is the constant $1_\A$ function on $X$.  
 \end{definition}
 \begin{remark}
 The reason we assume $X$ is compact metric in the previous definition is because the C*-algebra of a compact quantum metric space is always unital by definition and separable by \cite[Proposition 2.11]{Latremoliere13b}. Indeed, if $X$ is compact Hausdorff and $\A$ is finite-dimensional and $C(X,\A)$ is separable, then its state space is compact by unital and metrizable as the unit ball of a the dual of a separable Banach space is metrizable in the weak* topology.  However, $X$ embeds homeomorphically into the state space of $C(X,\A)$ (see proof of Proposition (\ref{any-state-lip-prop})) , which induces a metric on $X$ that agrees with its topology. The terminology "standard" is taken from \cite[Section IV.1.4]{Blackadar-op}.
 \end{remark}
 Now, we define  the family of seminorms we consider throughout this paper, and we note that we define them in the more general setting than Definition (\ref{standard-h-def}), in which the C*-algebra $\A$ in $C(X,\A)$ need not be finite-dimensional.  And, in fact, we can and do prove many interesting properties about the seminorms of the following definition without the assumption that $\A$ is finite-dimensional, and we only assume $\A$ is finite-dimensional when it is necessary in Theorem (\ref{fd-thm}), in that this theorem, in part, provides an equivalence for finite-dimensionality of $\A$. This thus shows that our seminorms are natural choices for Lip-norms on $C(X,\A)$ in the case when $\A$ is finite-dimensional.
\begin{definition}\label{homog-lipschitz-def}
Let $(X, \mathsf{d}_X)$ be a compact metric space and let $\A$ be a unital C*-algebra.  Let $C(X, \A)$ denote the unital  C*-algebra of continuous $\A$-valued functions on $X$ with point-wise algebraic and adjoint operations induced by $\A$, supremum norm on $X$, and the unit $1_{C(X,\A)}$ is the constant $1_\A$ function on $X$.  Let $C(X,\C1_\A)=\{a \in C(X, \A) : a(x) \in \C1_\A \text{ for all } x \in X\}$. Define:
\begin{equation*}
l_{\mathsf{d}_X}^{\mathsf{(n)}} (a)= \sup \left\{ \frac{\Vert a(x) - a(y) \Vert_{\mathsf{n}}}{\mathsf{d}_X (x,y)} : x,y\in X,x \neq y \right\} \text{ for all } a \in C(X, \A)
\end{equation*}
where $\Vert \cdot \Vert_\mathsf{n}$ denotes any norm over $\R$ or $\C$ on $\A$.

Let $\Lip_{\mathsf{d}_X}^{(\mathsf{n}),q}$ denote the following:
\begin{enumerate}
\item if $q=C(X)$, then for all $a \in C(X, \A)$ let:

 $\Lip_{\mathsf{d}_X}^{(\mathsf{n}),q}(a) = \max \left\{ l_{\mathsf{d}_X}^{(\mathsf{n})} (a), \left\Vert a+C(X, \C1_\A) \right\Vert_{C(X,\A)/C(X, \C1_\A) } \right\}$ ;
 
\item if $q=\C$, then for all $a \in C(X, \A)$ let:

 $\Lip_{\mathsf{d}_X}^{(\mathsf{n}),q}(a) = \max \left\{ l_{\mathsf{d}_X}^{(\mathsf{n})} (a), \left\Vert a+\C1_{C(X,\A)} \right\Vert_{C(X,\A)/\C1_{(C(X,\A)}} \right\}$;
 
\item if $\mu \in \StateSpace(C(X, \A))$ is any state and $q=\mu$, then for all $a \in C(X, \A)$ let:

 $\Lip_{\mathsf{d}_X}^{(\mathsf{n}),q}(a) = \max \left\{ l_{\mathsf{d}_X}^{(\mathsf{n})} (a), \left\Vert a-\mu(a)1_{C(X, \A)} \right\Vert_{C(X,\A)} \right\}$.
\end{enumerate}
If $\A=\C$, $\mathsf{n}=\C$ is the usual norm on $\C$, and $q=C(X)$, then we note that  $\Lip_{\mathsf{d}_X}^{(\mathsf{n}),q}=l^{(\mathsf{n})}_{\mathsf{d}_X}$ and denote this by $\Lip_{\mathsf{d}_X}$.

As a convention, when $\mathsf{n}$ or $q$ are not specified, then we implicity assume that they satisfy any of the conditions above.
\end{definition}
We note that the constructions of the above definition are related to  the construction of a norm in \cite[Proposition 4.4]{Latremoliere05b}, where the non-unital case is considered, in which $X$ is a locally compact separable metric space.  However,  the seminorm used there is a norm and uses the norm on $C_0(X, \A)$ in place of $q$ above.   Hence, the norm of \cite[Proposition 4.4]{Latremoliere05b} applied in our setting would vanish only at $0$, which would not provide a possibility for a Lip-norm.  Thus, the fact that we are left to rely on the above choices of $q$ does require us to do more work to prove that the seminorms of Definition (\ref{homog-lipschitz-def}) form Lip-norms if and only if $\A$ is finite-dimensional.

When $\A$ is a finite dimensional C*-algebra, there are many standard norms that can be placed on $\A$, which are automatically equivalent by finite-dimensionality.  Later in Section (\ref{mk-metric-section}), we will focus on one particular norm aside from the C*-norm, the max norm (see Lemma (\ref{norm-equiv-lemma}) and Remark (\ref{max-norm-remark}), the proof of Proposition (\ref{any-state-lip-prop}), and Corollary (\ref{main-cor}), for instances when the max norm is used or mentioned). But, for  now, let's focus on the algebraic properties of the domain of the seminorms of Definition (\ref{homog-lipschitz-def}), and note the following proposition does not assume finite-dimensionality of $\A$ in $C(X,\A)$ and this finite-dimensionality assumption on $\A$ does not appear until Theorem (\ref{fd-thm}), where it is, in fact, a necessity.
\begin{proposition}\label{Leibniz-homog-norm-prop}
Let $(X, \mathsf{d}_X)$ be a compact metric space and let $\A$ be a unital C*-algebra  and let $\mu \in \StateSpace(C(X,\A))$ be a state.

 Using notation from Defintion (\ref{homog-lipschitz-def}), if $\Vert \cdot \Vert_\mathsf{n}$ is a norm on $\A$ over $\R$ or $\C$ that is equivalent to the C*-norm $\Vert \cdot \Vert_{\A}$, then $\ker  \Lip_{\mathsf{d}_X}^{(\mathsf{n}),q}=\C1_{C(X,\A)}$  and  $\dom{\Lip_{\mathsf{d}_X}^{(\mathsf{n}),q}}_{C(X, \A)}$ is a  unital *-subalgebra of $C(X, \A).$ 
 
 Furthermore, if $M>0, N>0$ such that $M \Vert \cdot \Vert_\mathsf{n} \leq \Vert \cdot \Vert_\A \leq N\Vert \cdot \Vert_\mathsf{n}$, then:
\begin{enumerate}
\item if $q$ is either $C(X)$ or  $\C$, then  $\Lip_{\mathsf{d}_X}^{(\mathsf{n}),q}$ is a $\left(N/M,0\right)$-quasi-Leibniz seminorm;
\item if $q=\mu$, then $\Lip_{\mathsf{d}_X}^{(\mathsf{n}),q}$ is a $\left(\max \{N/M, 2\},0\right)$-quasi-Leibniz seminorm.
\end{enumerate}
\end{proposition}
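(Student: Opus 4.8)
The plan is to verify the two claims — that $\ker \Lip_{\mathsf{d}_X}^{(\mathsf{n}),q} = \C1_{C(X,\A)}$ and that $\dom{\Lip_{\mathsf{d}_X}^{(\mathsf{n}),q}}_{C(X,\A)}$ is a unital $*$-subalgebra — and then establish the quasi-Leibniz estimates. For the kernel, I would observe that $\Lip_{\mathsf{d}_X}^{(\mathsf{n}),q}(a)=0$ forces $l_{\mathsf{d}_X}^{(\mathsf{n})}(a)=0$; since $\Vert\cdot\Vert_\mathsf{n}$ is a genuine norm, this means $a(x)=a(y)$ for all $x,y\in X$, i.e.\ $a$ is a constant function $a\equiv c$ for some $c\in\A$. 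The second term of $\Lip_{\mathsf{d}_X}^{(\mathsf{n}),q}$ then pins $c$ down: in case $q=C(X)$ the condition $\Vert a + C(X,\C1_\A)\Vert = 0$ says $a\in C(X,\C1_\A)$, and combined with constancy gives $c\in\C1_\A$; in case $q=\C$ the quotient condition directly gives $a\in\C1_{C(X,\A)}$; in case $q=\mu$ the condition $\Vert a-\mu(a)1_{C(X,\A)}\Vert=0$ gives $a=\mu(a)1_{C(X,\A)}\in\C1_{C(X,\A)}$. Conversely any scalar multiple of the unit is constant and lies in each relevant subspace, so its seminorm is $0$. For the domain being a unital $*$-algebra: it contains $1_{C(X,\A)}$ (all terms vanish on it), and is visibly a linear subspace by subadditivity and homogeneity of both $l_{\mathsf{d}_X}^{(\mathsf{n})}$ and the norm/quotient-norm terms. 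Closure under adjoints follows because $\Vert a(x)^*-a(y)^*\Vert_\mathsf{n}$ need not equal $\Vert a(x)-a(y)\Vert_\mathsf{n}$ in general, but the quasi-Leibniz (equivalently, submultiplicative-type) bound we prove next shows in particular that products of finite-seminorm elements have finite seminorm, and for adjoints one uses that $\Vert\cdot\Vert_\mathsf{n}$ is equivalent to the C*-norm, which is $*$-invariant, together with $l_{\mathsf{d}_X}^{(\mathsf{n})}(a^*)\le (N/M)\, l_{\mathsf{d}_X}^{(\mathsf{n})}(a)$; hence $a^*$ has finite seminorm. (Alternatively, closure under multiplication is a consequence of the quasi-Leibniz inequality itself, so it suffices to prove that.)

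For the quasi-Leibniz inequality, the key is to control each of the two ingredients of $\Lip_{\mathsf{d}_X}^{(\mathsf{n}),q}(a\circ b)$ and $\Lip_{\mathsf{d}_X}^{(\mathsf{n}),q}(\{a,b\})$ separately. For the Lipschitz part $l_{\mathsf{d}_X}^{(\mathsf{n})}$: using $a(x)b(x)-a(y)b(y) = (a(x)-a(y))b(x) + a(y)(b(x)-b(y))$, passing through the C*-norm via $M\Vert\cdot\Vert_\mathsf{n}\le\Vert\cdot\Vert_\A\le N\Vert\cdot\Vert_\mathsf{n}$ to use submultiplicativity of $\Vert\cdot\Vert_\A$, and taking the sup over $x\ne y$, one gets
\begin{equation*}
l_{\mathsf{d}_X}^{(\mathsf{n})}(ab) \le \tfrac{N}{M}\left(\Vert a\Vert_{C(X,\A)}\, l_{\mathsf{d}_X}^{(\mathsf{n})}(b) + \Vert b\Vert_{C(X,\A)}\, l_{\mathsf{d}_X}^{(\mathsf{n})}(a)\right),
\end{equation*}
and the same bound holds for $a\circ b$ and $\{a,b\}$ since these are averages of $ab$ and $ba$ (the reversed product satisfies the symmetric estimate). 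For the second ingredient: when $q$ is $C(X)$ or $\C$, the quotient norm is an algebra seminorm on $C(X,\A)/\mathcal{J}$ (where $\mathcal{J}$ is the relevant closed ideal/subspace — note $C(X,\C1_\A)$ and $\C1_{C(X,\A)}$ are both central, and $C(X,\C1_\A)$ is an ideal only relative to... actually $C(X,\C1_\A)$ need not be an ideal, so here one uses instead that the quotient map is unital and the Leibniz-type bound $\Vert \bar a\bar b\Vert \le \Vert \bar a\Vert\,\Vert\bar b\Vert$ combined with $\Vert\bar a\Vert\le\Vert a\Vert$, giving $\Vert\overline{a\circ b}\Vert\le \Vert a\Vert\,\Vert\bar b\Vert+\Vert b\Vert\,\Vert\bar a\Vert$ after splitting the product — this is where the constant $1$, not $2$, appears). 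When $q=\mu$, one instead estimates $\Vert ab-\mu(ab)1\Vert \le \Vert ab - \mu(a)b\Vert + \Vert \mu(a)b - \mu(a)\mu(b)1\Vert = \Vert(a-\mu(a)1)b\Vert + |\mu(a)|\,\Vert b-\mu(b)1\Vert \le \Vert a-\mu(a)1\Vert\,\Vert b\Vert + \Vert a\Vert\,\Vert b-\mu(b)1\Vert$, and symmetrizing over $ab$ and $ba$ contributes the factor $2$; this accounts for the $\max\{N/M,2\}$.

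Finally, one assembles: $\Lip_{\mathsf{d}_X}^{(\mathsf{n}),q}(a\circ b) = \max\{l_{\mathsf{d}_X}^{(\mathsf{n})}(a\circ b), \text{(2nd term)}\}$, each of the two arguments of the max is bounded by $C(\Vert a\Vert_{C(X,\A)}\Lip_{\mathsf{d}_X}^{(\mathsf{n}),q}(b) + \Vert b\Vert_{C(X,\A)}\Lip_{\mathsf{d}_X}^{(\mathsf{n}),q}(a))$ with $C=N/M$ (resp.\ $C=\max\{N/M,2\}$) since $l_{\mathsf{d}_X}^{(\mathsf{n})}(\cdot)\le\Lip_{\mathsf{d}_X}^{(\mathsf{n}),q}(\cdot)$ and the second-term seminorm is likewise dominated by $\Lip_{\mathsf{d}_X}^{(\mathsf{n}),q}$, and the max of two quantities each $\le$ a bound is $\le$ that bound; the same for $\{a,b\}$, with $D=0$ throughout. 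The main obstacle I anticipate is the bookkeeping around the second ingredient when $q=C(X)$: since $C(X,\C1_\A)$ is a unital $*$-subalgebra but \emph{not} an ideal of $C(X,\A)$, the quotient $C(X,\A)/C(X,\C1_\A)$ is only an operator space, not an algebra, so one cannot invoke submultiplicativity of the quotient norm directly; instead one must carry out the product-splitting $ab = (a-\text{scalar part})b + (\text{scalar part})b$ at the level of representatives and then pass to the quotient, using only $\Vert\bar x\Vert_{C(X,\A)/C(X,\C1_\A)} \le \Vert x\Vert_{C(X,\A)}$, which is exactly what yields the Leibniz constant $1$ for that term.
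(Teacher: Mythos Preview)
Your overall plan matches the paper's: the kernel computation, the $*$-subalgebra check, and the $(N/M)$ bound for $l_{\mathsf{d}_X}^{(\mathsf{n})}$ via the telescoping identity $a(x)b(x)-a(y)b(y)=(a(x)-a(y))b(x)+a(y)(b(x)-b(y))$ are all done exactly as in the paper. The differences (and the gaps) are entirely in how you handle the \emph{second} term of $\Lip_{\mathsf{d}_X}^{(\mathsf{n}),q}$.

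\textbf{Case $q=C(X)$.} You correctly flag that $C(X,\C1_\A)$ is a unital C*-subalgebra but not an ideal, so the quotient is only a normed space. Your proposed fix---split $ab=(a-\text{scalar part})b+(\text{scalar part})b$ and pass to the quotient---does not work: if $f\in C(X,\C1_\A)$ approximates $a$, then $fb\notin C(X,\C1_\A)$ in general, so $\|ab+C(X,\C1_\A)\|$ is not bounded by $\|(a-f)b\|$. The paper does not attempt this by hand; it invokes \cite[Theorem~3.1]{Rieffel11}, which says precisely that the quotient norm by any unital C*-subalgebra is Leibniz. This is a nontrivial C*-algebraic fact (Rieffel's argument uses best approximations with controlled norm in C*-algebras), and you would need either to cite it or to reproduce a genuine proof.

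\textbf{Case $q=\mu$.} Your chain
\[
\|ab-\mu(ab)1\|\le\|ab-\mu(a)b\|+\|\mu(a)b-\mu(a)\mu(b)1\|
\]
is not a valid triangle inequality unless $\mu(ab)=\mu(a)\mu(b)$, which fails for general states. What your right-hand side actually bounds is $\|ab-\mu(a)\mu(b)1\|$, and you are missing the correction $|\mu(ab)-\mu(a)\mu(b)|=|\mu((a-\mu(a)1)b)|\le\|a-\mu(a)1\|\,\|b\|$. Once you add this term you do get the $(2,0)$ constant, but your attribution of the factor $2$ to ``symmetrizing over $ab$ and $ba$'' is wrong---the $2$ comes from this extra term. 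The paper avoids the computation entirely by observing that $a\mapsto\mu(a)1_{C(X,\A)}$ is a conditional expectation onto $\C1_{C(X,\A)}$ and citing \cite[Lemma~3.2]{Aguilar-Latremoliere15}, which gives $(2,0)$-quasi-Leibniz for $a\mapsto\|a-E(a)\|$ whenever $E$ is a conditional expectation.
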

\begin{proof}
For $\ker  \Lip_{\mathsf{d}_X}^{(\mathsf{n}),q}=\C1_{C(X,\A)}$, first note that if $a \in \C1_{C(X, \A)} \subseteq C(X,\C1_\A)$, then since $a$ is constant, $l^{(\mathsf{n})}_{\mathsf{d}_X}(a)=0$ and, for any choice of $q$, the second expression in the definition of $\Lip^{(\mathsf{n}), q}_{\mathsf{d}_X}$ is also $0$ and so $\Lip^{(\mathsf{n}), q}_{\mathsf{d}_X}(a)=0.$  Hence, we have  $\ker  \Lip_{\mathsf{d}_X}^{(\mathsf{n}),q} \supseteq \C1_{C(X,\A)}$.  

Next, let $a \in \ker  \Lip_{\mathsf{d}_X}^{(\mathsf{n}),q}$.  First, consider the case when $q=\C$ or $q=\mu$.  Since $ \Lip_{\mathsf{d}_X}^{(\mathsf{n}),q}(a)=0$, then either: 
\begin{equation*}\left\Vert a+\C1_{C(X, \A)} \right\Vert_{C(X, \A)/\C1_{C(X, \A)}}=0 \text{ or }\Vert a-\mu(a)1_{C(X, \A)}\Vert_{C(X, \A)}=0.\end{equation*}  In either case, we have that $a \in \C1_{C(X, \A)}$ and so $\ker  \Lip_{\mathsf{d}_X}^{(\mathsf{n}),q}=\C1_{C(X,\A)}$.  Second, assume that $q=C(X)$.  If $ \Lip_{\mathsf{d}_X}^{(\mathsf{n}),q}(a)=0$, then $l^{(\mathsf{n})}_{\mathsf{d}_X}(a)=0$ implies that $a$ is constant.  However, the expression $\Vert a+C(X, \C1_\A) \Vert_{C(X, \A)/C(X, \C1_\A)}=0$ implies that $a(x) \in \C1_\A$ for all $x \in X$.  Thus $a$ is a constant scalar and so $a \in \C1_{C(X, \A)}$, which implies that $\ker  \Lip_{\mathsf{d}_X}^{(\mathsf{n}),q}=\C1_{C(X,\A)}$.

Next, consider $\dom{\Lip_{\mathsf{d}_X}^{(\mathsf{n}),q}}_{C(X, \A)}$.  It is clear that $\Lip_{\mathsf{d}_X}^{(\mathsf{n}),q}$ is a seminorm and thus $\dom{\Lip_{\mathsf{d}_X}^{(\mathsf{n}),q}}_{C(X, \A)}$ is a subspace of $C(X, \A)$, and we have already shown that it is unital.  

For subalgebra,  since $\Vert \cdot \Vert_\mathsf{n}$ is equivalent to $\Vert \cdot \Vert_\A$, we have that there exist $M, N >0 $ such that $M \Vert \cdot \Vert_\mathsf{n} \leq \Vert \cdot \Vert_\A \leq N\Vert \cdot \Vert_\mathsf{n}$. Let $a,b \in \dom{\Lip_{\mathsf{d}_X}^{(\mathsf{n}),q}}_{C(X, \A)}$. We then have:
\begin{equation}\label{quasi-Leibniz-eq}
\begin{split}
& \Vert ab(x)-ab(y) \Vert_{\mathsf{n}}\\
 & \leq \frac{1}{M} \Vert ab(x) -ab(y) \Vert_{\A} \\
& \leq \frac{1}{M} \left(\Vert ab(x) -a(x)b(y) \Vert_\A + \Vert a(x)b(y) - a(y)b(y) \Vert_\A \right) \\
&  \leq \frac{1}{M} \left(\Vert a(x) \Vert_\A \cdot \Vert b(x) - b(y) \Vert_\A + \Vert a(x)-a(y) \Vert_\A \cdot \Vert b(y) \Vert_\A \right) \\
& \leq \frac{1}{M} \left(\Vert a \Vert_{C(X,\A)} \cdot \Vert b(x) - b(y) \Vert_\A + \Vert a(x)-a(y) \Vert_\A \cdot \Vert b \Vert_{C(X, \A)}\right)\\
& \leq  \frac{1}{M} \Big( \Vert a \Vert_{C(X,\A)} \cdot N  \Vert b(x)-b(y) \Vert_\mathsf{n}+  N\Vert a(x)-a(y) \Vert_{\mathsf{n}} \cdot \Vert b \Vert_{C(X, \A)}\Big) \\
& = \frac{N}{M} \left( \Vert a \Vert_{C(X,\A)} \cdot \Vert b(x)-b(y) \Vert_\mathsf{n} +  \Vert a(x)-a(y) \Vert_{\mathsf{n}} \cdot \Vert b \Vert_{C(X, \A)}\right) 
\end{split}
\end{equation}
for all $x,y \in X$.  Thus, since $l^{(\mathsf{n})}_{\mathsf{d}_X}(a), l^{(\mathsf{n})}_{\mathsf{d}_X}(b) < \infty$, we have that $l^{(\mathsf{n})}_{\mathsf{d}_X}(ab)< \infty$.  Furthermore, the seminorm $l^{(\mathsf{n})}_{\mathsf{d}_X}$ is $(N/M, 0)$-quasi-Leibniz.  And, clearly, the expression determined by $q$ is finite on all of $C(X, \A)$.  Hence, the unital subspace $ \dom{\Lip_{\mathsf{d}_X}^{(\mathsf{n}),q}}_{C(X, \A)}$ is a subalgebra.  For *-subalgebra, a similar argument to  that of Expression (\ref{quasi-Leibniz-eq}) applies utilizing the equivalence of norms and the fact that the adjoint is an isometry with respect to the C*-norm.  

Finally, note that both quotient norms $\Vert \cdot \Vert_{C(X, \A)/\C1_{C(X, \A)}}$ and $\Vert \cdot \Vert_{C(X, \A)/C(X, \C1_\A)}$ are Leibniz by \cite[Theorem 3.1]{Rieffel11} since both $\C1_{C(X, \A)}$ and $C(X, \C1_\A)$ are unital C*-subalgebras of $C(X, \A)$.  Also, note that for equivalent norms $M \leq N$ or $(N/M)\geq 1$ lest we reach a contradiction.  This along with Expression (\ref{quasi-Leibniz-eq}) provides statement (1) of this proposition.  

For statement (2), we note that the function $a \in C(X, \A) \longmapsto \mu(a)1_{C(X,\A)}$ is a conditional expectation onto $\C1_{C(X, \A)}$.  Hence, by \cite[Lemma 3.2]{Aguilar-Latremoliere15}, we conclude that the seminorm:
\begin{equation*} a \in C(X, \A) \longmapsto \Vert a- \mu(a)1_{C(X, \A)} \Vert_{C(X, \A)}
\end{equation*} is $(2,0)$-quasi-Leibniz, and this establishes statement (2) with Expression (\ref{quasi-Leibniz-eq}).
\end{proof}

We now move onto the analytic properties of the seminorm $\Lip_{\mathsf{d}_X}^{(\mathsf{n}),q}$ on $C(X,\A)$ such as lower semi-continuity and density of the domain. Towards this goal, in Lemma (\ref{classical-lip-dense-lemma}), we prove these properties in the classical case $C(X)$ equipped with its standard Lipschitz seminorm $\Lip_{\mathsf{d}_X}$, which is a well-known result but useful for the proof of the  lemma that follows.    

\begin{lemma}\label{classical-lip-dense-lemma}
Consider the C*-algebra $C(X)$.  The Lipschitz seminorm $\Lip_{\mathsf{d}_X}$ is lower semi-continuous on $C(X)$ with respect to $\Vert \cdot \Vert_{C(X)}$ and its domain $\dom{\Lip_{\mathsf{d}_X}}_{C(X)}$ is dense.
\end{lemma}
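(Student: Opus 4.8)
The plan is to prove the two claims—lower semicontinuity and density of the domain—separately, using only elementary facts about $C(X)$ with $X$ compact metric.

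For lower semicontinuity: I would fix a sequence $(f_n)$ in $C(X)$ converging uniformly to $f \in C(X)$, and show $\Lip_{\mathsf{d}_X}(f) \leq \liminf_n \Lip_{\mathsf{d}_X}(f_n)$. The key observation is that $\Lip_{\mathsf{d}_X}$ is a supremum of continuous seminorms: for each fixed pair $x \neq y$ in $X$, the map $g \mapsto \frac{|g(x)-g(y)|}{\mathsf{d}_X(x,y)}$ is continuous on $C(X)$ with respect to $\|\cdot\|_{C(X)}$ (indeed it is bounded by $\frac{2}{\mathsf{d}_X(x,y)}\|g\|_{C(X)}$). Hence for each such pair, $\frac{|f(x)-f(y)|}{\mathsf{d}_X(x,y)} = \lim_n \frac{|f_n(x)-f_n(y)|}{\mathsf{d}_X(x,y)} \leq \liminf_n \Lip_{\mathsf{d}_X}(f_n)$, and taking the supremum over all pairs $x \neq y$ gives the conclusion. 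A pointwise supremum of lower semicontinuous functions is lower semicontinuous, so this is really just unwinding that standard fact.

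For density of $\dom{\Lip_{\mathsf{d}_X}}_{C(X)}$: the cleanest route is to invoke the Stone–Weierstrass theorem. The subalgebra of Lipschitz functions on $X$ contains the constants, is closed under the algebraic operations (a product of bounded Lipschitz functions is Lipschitz, as in Expression (\ref{quasi-Leibniz-eq})) and under complex conjugation, and separates points of $X$ because the functions $x \mapsto \mathsf{d}_X(x, x_0)$ are $1$-Lipschitz for each fixed $x_0 \in X$. By Stone–Weierstrass this $*$-subalgebra is uniformly dense in $C(X)$. I should remark that $\dom{\Lip_{\mathsf{d}_X}}_{C(X)}$ is exactly the algebra of (complex-valued) Lipschitz functions on $X$, so this is precisely what is needed.

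I do not anticipate a real obstacle here—both statements are classical—so the main care is just to phrase the argument cleanly and note that one could alternatively prove density directly (e.g., using an inf-convolution/McShane-type extension, replacing $f$ by $f_\varepsilon(x) = \inf_{y \in X}\bigl(f(y) + \varepsilon^{-1}\mathsf{d}_X(x,y)\bigr)$, which is $\varepsilon^{-1}$-Lipschitz and converges uniformly to $f$ as $\varepsilon \to 0$ by uniform continuity of $f$), but the Stone–Weierstrass argument is shorter and suffices. The point of recording this lemma, as the text says, is that it will be fed into the proof of the analogous statement for $\Lip_{\mathsf{d}_X}^{(\mathsf{n}),q}$ on $C(X,\A)$, so I would keep the proof self-contained and explicit about the identification of the domain with the Lipschitz algebra.
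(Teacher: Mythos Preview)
Your proposal is correct and matches the paper's proof essentially step for step: both arguments obtain lower semicontinuity by observing that $\Lip_{\mathsf{d}_X}$ is a pointwise supremum of the continuous seminorms $g\mapsto \frac{|g(x)-g(y)|}{\mathsf{d}_X(x,y)}$, and both obtain density via Stone--Weierstrass using the distance functions $x\mapsto \mathsf{d}_X(x,x_0)$ to separate points (the paper invokes Proposition~(\ref{Leibniz-homog-norm-prop}) for the $*$-subalgebra property where you cite Expression~(\ref{quasi-Leibniz-eq}), which amounts to the same thing).
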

\begin{proof}
First, we check lower semi-continuity of $\Lip_{\mathsf{d}_X}$.   Fix $x,y \in X$.  Note that the map $\Lip_{x,y}: f \in C(X) \longmapsto \frac{\vert f(x)-f(y)\vert}{\mathsf{d}_X(x,y)} \in \R$ is continuous.  But, we have that $\Lip_{\mathsf{d}_X}(f)=\sup \left\{ \Lip_{x,y}(f) : x,y \in X \right\}$.  Hence, since a supremum of real-valued lower semi-continuous functions is lower semi-continuous, we have that $\Lip_{\mathsf{d}_X}$ is lower semi-continuous.

Next, we prove density of $\dom{\Lip_{\mathsf{d}_X}}_{C(X)}$ in $C(X)$. By Proposition (\ref{Leibniz-homog-norm-prop}), we have that   $\dom{\Lip_{\mathsf{d}_X}}_{C(X)}$ is a unital *-subalgebra of $C(X)$.  Now, fix $a,b \in X, a \neq b$ and consider the function on $X$ defined by $a_\mathsf{d}(x)=\mathsf{d}_X(a,x)$ for all $x \in X$.  Clearly, the function $a_\mathsf{d} \in C(X)$.  Also, we have for $x,y \in X$ that $\vert a_\mathsf{d}(x)-a_\mathsf{d}(y)\vert=\vert \mathsf{d}_X (a,x) - \mathsf{d}_X(a,y)\vert \leq \mathsf{d}_X(x,y)$.  Hence, the function $a_\mathsf{d} \in \dom{\Lip_{\mathsf{d}_X}}_{C(X)}.$ Finally, $a_\mathsf{d}(b)>0=a_\mathsf{d}(a)$, which implies that $ \dom{\Lip_{\mathsf{d}_X}}_{C(X)}$ separates the points of $X$.  Therefore, the proof is complete by \cite[Stone-Weierstrass Theorem 44.5]{Willard}. 
\end{proof}

Now, we are prepared to generalize the results of Lemma (\ref{classical-lip-dense-lemma}) when $\C$ is replaced by any unital C*-algebra. We  utilize the argument outlined in \cite[Theorem 3.4]{Kaplansky51} to obtain the following.

\begin{lemma}\label{dom-dense-lemma}
Let $(X,\mathsf{d}_X)$ be a compact metric space and let $\A$ be a unital C*-algebra.  Using notation from Defintion (\ref{homog-lipschitz-def}), if $\Vert \cdot \Vert_\mathsf{n}$ is a norm on $\A$ over $\R$ or $\C$ that is equivalent to the C*-norm $\Vert \cdot \Vert_{\A}$, then the seminorm $\Lip_{\mathsf{d}_X}^{(\mathsf{n}),q}$ is lower semi-continuous with respect to $\Vert \cdot \Vert_{C(X, \A)}$ and the sets $\dom{\Lip_{\mathsf{d}_X}^{(\mathsf{n}),q}}_{C(X, \A)}$ and $\dom{\Lip_{\mathsf{d}_X}^{(\mathsf{n}),q}}$ are dense in $C(X, \A)$ and $\sa{C(X, \A)}$, respectively.
\end{lemma}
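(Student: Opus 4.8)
The plan is to mirror the structure of Lemma~\ref{classical-lip-dense-lemma}: first I would establish lower semi-continuity of $\Lip_{\mathsf{d}_X}^{(\mathsf{n}),q}$, then density of $\dom{\Lip_{\mathsf{d}_X}^{(\mathsf{n}),q}}_{C(X,\A)}$ in $C(X,\A)$, and finally deduce density of $\dom{\Lip_{\mathsf{d}_X}^{(\mathsf{n}),q}}$ in $\sa{C(X,\A)}$ from the $\ast$-subalgebra property recorded in Proposition~\ref{Leibniz-homog-norm-prop}.

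For lower semi-continuity I would fix $x,y\in X$ with $x\neq y$ and note that $a\in C(X,\A)\longmapsto \Vert a(x)-a(y)\Vert_{\mathsf{n}}/\mathsf{d}_X(x,y)$ is norm-continuous, since the evaluations $a\mapsto a(x)$ and $a\mapsto a(y)$ are bounded linear maps $C(X,\A)\to\A$ and $\Vert\cdot\Vert_{\mathsf{n}}$ is continuous on $\A$ by its equivalence with $\Vert\cdot\Vert_\A$. Then $l_{\mathsf{d}_X}^{(\mathsf{n})}$ is a pointwise supremum of continuous functions, hence lower semi-continuous, exactly as in Lemma~\ref{classical-lip-dense-lemma}. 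The second quantity entering $\Lip_{\mathsf{d}_X}^{(\mathsf{n}),q}$, in each of the three cases of Definition~\ref{homog-lipschitz-def}, is in fact norm-continuous: it is the composition of a bounded linear map ($a\mapsto a+C(X,\C1_\A)$, resp. $a\mapsto a+\C1_{C(X,\A)}$, resp. $a\mapsto a-\mu(a)1_{C(X,\A)}$) with the relevant norm. Since a maximum of a lower semi-continuous function and a continuous function is lower semi-continuous, this yields lower semi-continuity of $\Lip_{\mathsf{d}_X}^{(\mathsf{n}),q}$ with respect to $\Vert\cdot\Vert_{C(X,\A)}$.

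For density I would first observe that the $q$-term is finite on all of $C(X,\A)$, so $\dom{\Lip_{\mathsf{d}_X}^{(\mathsf{n}),q}}_{C(X,\A)}=\dom{l_{\mathsf{d}_X}^{(\mathsf{n})}}_{C(X,\A)}$, which is a unital $\ast$-subalgebra of $C(X,\A)$ by Proposition~\ref{Leibniz-homog-norm-prop}. It contains every constant function (a copy of $\A$) and, by Lemma~\ref{classical-lip-dense-lemma}, contains $\dom{\Lip_{\mathsf{d}_X}}_{C(X)}\cdot 1_{C(X,\A)}$, a dense subset of $C(X)1_\A$; being an algebra it then contains every finite sum $\sum_i f_i a_i$ with $f_i\in C(X)$ scalar Lipschitz and $a_i\in\A$ viewed as a constant function. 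Following the argument of \cite[Theorem~3.4]{Kaplansky51} --- equivalently, using that the algebraic tensor product $C(X)\otimes\A$ is dense in $C(X,\A)\cong C(X)\otimes_{\min}\A$ together with density of the scalar Lipschitz functions in $C(X)$ --- the closure of such sums is all of $C(X,\A)$. For the self-adjoint statement, given $a\in\sa{C(X,\A)}$ and $\varepsilon>0$ I would pick $b$ in the domain with $\Vert a-b\Vert_{C(X,\A)}<\varepsilon$; then $(b+b^\ast)/2$ lies in the domain (a $\ast$-subalgebra), is self-adjoint, and $\Vert a-(b+b^\ast)/2\Vert_{C(X,\A)}\leq\bigl(\Vert a-b\Vert_{C(X,\A)}+\Vert a-b^\ast\Vert_{C(X,\A)}\bigr)/2=\Vert a-b\Vert_{C(X,\A)}<\varepsilon$, using $a^\ast=a$ and that the adjoint is isometric, which gives density in $\sa{C(X,\A)}$.

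The hard part is the density step when $\A$ is infinite-dimensional: I must be sure that scalar Lipschitz functions times elements of $\A$ span a dense subspace of $C(X,\A)$, i.e., that $C(X,\A)$ really is the minimal tensor product $C(X)\otimes\A$ with its algebraic tensor product dense in it. An alternative that avoids tensor products altogether is a direct partition-of-unity approximation: choose a finite cover of $X$ by balls of radius $\delta/2$ on which $a$ varies by less than $\varepsilon$ (uniform continuity on the compact space $X$), take a Lipschitz partition of unity $\{\phi_i\}$ subordinate to it (which exists on any compact metric space, the normalizing denominator being bounded away from $0$), and set $b=\sum_i\phi_i\,a(x_i)$; then $b$ is manifestly $\A$-valued Lipschitz and $\Vert a-b\Vert_{C(X,\A)}<\varepsilon$. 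Everything else is routine bookkeeping with the equivalence of $\Vert\cdot\Vert_{\mathsf{n}}$ and the C*-norm, which only enters through harmless constants, exactly as in Proposition~\ref{Leibniz-homog-norm-prop}.
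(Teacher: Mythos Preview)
Your proposal is correct. The lower semi-continuity and self-adjoint density arguments match the paper's exactly. For density in $C(X,\A)$, the paper follows your \emph{alternative} route (partition of unity) rather than the tensor-product argument you lead with: it takes a continuous partition of unity $\{p_k\}$ subordinate to a $\delta/2$-cover (via Bourbaki), forms $f_p=\sum_k f(x_k)p_k$, and then---instead of assuming the $p_k$ are already Lipschitz---approximates each $p_k$ in $C(X)$ by a scalar Lipschitz $q_k$ using Lemma~\ref{classical-lip-dense-lemma}, obtaining $f_\varepsilon=\sum_k f(x_k)q_k\in\dom{\Lip_{\mathsf{d}_X}^{(\mathsf{n}),q}}_{C(X,\A)}$ within $\varepsilon$ of $f$. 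Your shortcut of producing a Lipschitz partition of unity directly is a clean simplification that avoids this extra approximation step; the tensor-product route is also valid (the paper records the isomorphism $C(X,\A)\cong\A\otimes C(X)$ in the proof of Lemma~\ref{pure-state-lemma}) but the partition-of-unity argument has the virtue of being entirely self-contained.
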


\begin{proof}
Semi-continuity follows as in the proof of Lemma (\ref{classical-lip-dense-lemma}) along with the fact that $\| \cdot \|_\mathsf{n}$ is equivalent to $\|\cdot \|_\A$.

For density of $\dom{\Lip_{\mathsf{d}_X}^{(\mathsf{n}),q}}_{C(X, \A)}$ in $C(X, \A)$, let $f\in C(X,\A)$.  Let $\varepsilon>0$. As $X$ is compact, $f$ is uniformly continuous, and thus there exists $\delta>0$ such that: 
\begin{equation}\label{deltaepsilon}
\mathsf{d}_X(x,y)<\delta\qquad\implies\qquad\|f(x)-f(y)\|_\A <\varepsilon/2
\end{equation}
for all $x,y \in X$.  Define $U(y,\delta/2)=\{x \in X: \mathsf{d}_X (x,y)<\delta/2\}$ for all $y \in X$.  Again, as $X$ is compact, the open cover $\{ U(y,\delta/2)\subseteq X: y \in X\}$ of $X$ has a finite subcover of $X$ given by $y_1, \ldots, y_n\in X$ such that $\cup_{k=1}^n U(y_k, \delta/2)=X$.  Since $X$ is compact Hausdorff, there exists a partition of unity with respect to the cover $\{U(y_1, \delta/2), \ldots, U(y_n, \delta/2)\}$ by \cite[Proposition IX.4.3.3]{Bourbaki-top2}.  In particular, for each $k \in \{1, \ldots, n\}$, there exists a continuous function $p_k: X \rightarrow [0,1]$ such that $\{x \in X: p_k(x)>0\}\neq \emptyset$ and if we define $V_k=\{x \in X: p_k(x)>0\}$, then   $\{V_1, \ldots, V_n\}$ is an open cover of $X$ and $V_k \subseteq \overline{V_k}^{\mathsf{d}_X} \subseteq U(y_k,\delta/2)$ for each $k \in \{1, \ldots, n\}$. Futhermore, we have $\sum_{k=1}^n p_k=1_{C(X)}$, which is the constant $1$ function on $X$.

 Now, note that by definition of $l^{(\mathsf{n})}_{\mathsf{d}_X}$, we have that if $g \in C(X)$ and $\Lip_{\mathsf{d}_X}(g)< \infty$, then for any $a \in \A$, we have that $a\cdot g \in C(X, \A)$, where $a\cdot g:x \in X \mapsto a\cdot g(x) \in \A$, and $l^{(\mathsf{n})}_{\mathsf{d}_X}(a\cdot g)= \|a\|_{\mathsf{n}}\cdot \Lip_{\mathsf{d}_X}(g) <\infty$.  Next, for all $k\in \{1, \ldots, n\}$, fix some $x_k \in V_k$.  Let $k\in \{1, \ldots, n\}$. If $f(x_k)=0$, then clearly $f(x_k)p_k \in \dom{\Lip_{\mathsf{d}_X}^{(\mathsf{n}),q}}_{C(X, \A)}$.  If $f(x_k)\neq 0$, then let $q_k \in \dom{\Lip_{\mathsf{d}_X}}_{C(X)}$ such that $\| p_k - q_k \|_{C(X)} < \varepsilon/(2n\cdot \|f(x_k)\|_\A)$ by Lemma (\ref{classical-lip-dense-lemma}). Furthermore, by the comments at the beginning of this paragraph, we have that $f(x_k)\cdot q_k \in \dom{\Lip_{\mathsf{d}_X}^{(\mathsf{n}),q}}_{C(X, \A)}$ and: 
 \begin{equation*}\|f(x_k)\cdot p_k - f(x_k)\cdot q_k \|_{C(X, \A)}= \|f(x_k)\|_\A \cdot \|p_k-q_k\|_{C(X)} < \varepsilon/(2n),
 \end{equation*}
 and thus:
 \begin{equation*}
 \left\| \sum_{k=1}^n f(x_k)\cdot p_k - \sum_{k=1}^n f(x_k)\cdot q_k\right\|_{C(X,\A)} < n \cdot \varepsilon/(2n)=\varepsilon/2.
 \end{equation*}
 Based on these observations, define $f_p =  \sum_{k=1}^n f(x_k)\cdot p_k \in C(X, \A)$ and $f_\varepsilon = \sum_{k=1}^n f(x_k)\cdot q_k \in \dom{\Lip_{\mathsf{d}_X}^{(\mathsf{n}),q}}_{C(X, \A)}$, which is in $\dom{\Lip_{\mathsf{d}_X}^{(\mathsf{n}),q}}_{C(X, \A)}$ since $\Lip_{\mathsf{d}_X}^{(\mathsf{n}),q}$ is a seminorm, and note that $\|f_p-f_\varepsilon\|_{C(X, \A)} < \varepsilon/2$.
 
 Now, let $x \in X$.  Since $\{V_1, \ldots, V_n\}$ is a cover of $X$, we have that $\emptyset \neq \{l \in \{1, \ldots, n\} : x \in V_l\}=\{l \in \{1, \ldots, n\} : p_l(x)>0\}$.  Denote $\{l_1, \ldots, l_m\} =\{l \in \{1, \ldots, n\} : p_l(x)>0\}$, and in particular, we have that $p_l(x)=0$ if $l \in \{1, \ldots, n\}\setminus \{l_1, \ldots, l_m\}$.  Since $x \in V_{l_j} \subseteq U(y_{l_j},\delta/2)$, for all $j \in \{1, \ldots, m\}$, we have that $\mathsf{d}_X(x,x_{l_j})< \delta$ for all $j \in \{1, \ldots, m\}$. Hence by Expression (\ref{deltaepsilon}), we gather: 
 \begin{align*}
 \|f(x)-f_p(x)\|_\A& = \left\| \left(\sum_{k=1}^np_k(x)\right)\cdot f(x) - \sum_{k=1}^nf(x_k)\cdot p_k(x) \right\|_\A\\
 & = \left\| \left(\sum_{k=1}^np_k(x)\cdot f(x)\right) - \sum_{k=1}^nf(x_k)\cdot p_k(x) \right\|_\A\\
 & \leq \sum_{k=1}^n p_k(x) \cdot \|f(x)-f(x_k)\|_\A \\
 & = \sum_{j=1}^m p_{l_j}(x)\cdot \|f(x) - f(x_{l_j})\|_\A < \sum_{j=1}^m p_{l_j}(x) \cdot \varepsilon/2  \leq 1 \cdot  \varepsilon/2
 \end{align*}
 since $\sum_{k=1}^n p_k=1_{C(X)}$. As $x \in X$ was arbitrary, we have $\|f-f_p\|_{C(X, \A)}\leq \varepsilon/2$, which implies that $\|f-f_\varepsilon\|_{C(X,\A)}\leq \|f-f_p\|_{C(X, \A)} + \|f_p-f_\varepsilon\|_{C(X, \A)}< \varepsilon/2+\varepsilon/2=\varepsilon$, where $f_\varepsilon \in \dom{\Lip_{\mathsf{d}_X}^{(\mathsf{n}),q}}_{C(X, \A)},$ which establishes that $\dom{\Lip_{\mathsf{d}_X}^{(\mathsf{n}),q}}_{C(X, \A)}$ is dense in $C(X, \A)$. 
 
 Finally, since $\dom{\Lip_{\mathsf{d}_X}^{(\mathsf{n}),q}}_{C(X,\A)}$ is a *-subalgebra of $C(X,\A)$ by Proposition (\ref{Leibniz-homog-norm-prop}), then $\dom{\Lip_{\mathsf{d}_X}^{(\mathsf{n}),q}}_{C(X,\A)} \cap \sa{C(X, \A)} = \dom{\Lip_{\mathsf{d}_X}^{(\mathsf{n}),q}}$ is dense in $\sa{C(X, \A)}$.  Indeed, let $a \in \sa{C(X, \A)}.$  There exists a sequence $(a_n)_{n \in \N} \subset \dom{\Lip_{\mathsf{d}_X}^{(\mathsf{n}),q}}_{C(X,\A)}$ that converges to $a$.  However, as $\dom{\Lip_{\mathsf{d}_X}^{(\mathsf{n}),q}}_{C(X,\A)}$ is a *-subalgebra, we have that $\left(\frac{a_n+a_n^*}{2}\right)_{n \in \N} \subset \dom{\Lip_{\mathsf{d}_X}^{(\mathsf{n}),q}}_{C(X,\A)} \cap \sa{C(X, \A)}= \dom{\Lip_{\mathsf{d}_X}^{(\mathsf{n}),q}}$ and converges to $\frac{a+a^*}{2}=\frac{a+a}{2}=a$, which completes the proof.
\end{proof}
\begin{remark}
In  Lemma (\ref{dom-dense-lemma}),  the proof of the  density of $\dom{\Lip_{\mathsf{d}_X}^{(\mathsf{n}),q}}_{C(X,\A)}$ did not utilize the C*-algebra structure of $\A$, and this density result would be true if $\A$ was any  normed space.  
\end{remark}

We are on our way to the final steps in proving that we have  compact quantum metric spaces.  There are several ways to approach this.  We will use  \cite[Theorem 1.9]{Rieffel98a} (this is equivalence (2) of Theorem (\ref{Rieffel-thm})), which is Rieffel's first characterization of compact  quantum metric spaces and characterizes when the weak * topology on the state space is metrized by the Monge-Kantorovich metric (the quantum metric) of Definition (\ref{Monge-Kantorovich-def}) by a fascinating application of the Arzela-Ascoli Theorem.  The reason we use  equivalence (2) of Theorem (\ref{Rieffel-thm}) is because it requires us to study the diameter of the Monge-Kantorovich metric, which will provide greater insight into the results of Section (\ref{mk-metric-section}) (see Remark (\ref{diam-remark})). This is Proposition (\ref{mk-metric-diam-lemma}).  However, we first prove two lemmas, the first of which is a likely well-known fact about a characterization of  pure states on $C(X, \A)$. The following lemma can be stated in more generality, but in order to avoid introducing more notation, we present it in the context we require.
\begin{lemma}\label{pure-state-lemma}
Let $(X, \mathsf{d}_X)$ be a compact metric space and  $\A$ be a unital C*-algebra.  Let $x \in X$ and $\phi$ be a pure state on $\A$.  If we define $\phi_x(a)=\phi(a(x))$ for all $a \in C(X, \A)$, then $\phi_x$ is a pure state on $C(X, \A)$.

Furthermore, if $\mu$ is a pure state on $C(X,\A)$, then there exists $x \in X$ and a pure state $\phi$ on $\A$ such that $\mu=\phi_x.$
\end{lemma}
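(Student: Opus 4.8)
The plan is to realize $\phi_x$ as the pullback of $\phi$ along the evaluation $\ast$-homomorphism, and for the converse to use the GNS representation of a pure state together with the fact that $C(X,\A)$ contains $C(X,\C1_\A)\cong C(X)$ as a \emph{central} $\ast$-subalgebra. Nothing metric is used, so everything works for $X$ merely compact Hausdorff, consistent with the remark preceding the lemma.

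\emph{First statement.} For $x\in X$ the evaluation map $\mathrm{ev}_x\colon C(X,\A)\to\A$, $a\mapsto a(x)$, is a unital $\ast$-homomorphism, and it is surjective because every constant function lies in $C(X,\A)$; its kernel is the closed two-sided ideal $J_x=\{a\in C(X,\A):a(x)=0\}$, so $\mathrm{ev}_x$ induces a $\ast$-isomorphism $C(X,\A)/J_x\cong\A$. Since $\phi_x=\phi\circ\mathrm{ev}_x$, it suffices to show that the pullback of a pure state along a surjective $\ast$-homomorphism is pure. I would argue by convex combinations: if $\phi_x=t\psi_1+(1-t)\psi_2$ with $t\in(0,1)$ and $\psi_1,\psi_2\in\StateSpace(C(X,\A))$, then for every positive $b\in J_x$ we get $0=\phi_x(b)=t\psi_1(b)+(1-t)\psi_2(b)$ with $\psi_i(b)\geq 0$, hence $\psi_i(b)=0$; as $J_x$ is spanned by its positive elements, $\psi_i$ vanishes on $J_x$ and therefore descends to a state $\widetilde{\psi_i}$ on $C(X,\A)/J_x\cong\A$ with $\psi_i=\widetilde{\psi_i}\circ\mathrm{ev}_x$. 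Then $\phi=t\widetilde{\psi_1}+(1-t)\widetilde{\psi_2}$, and purity of $\phi$ forces $\widetilde{\psi_1}=\widetilde{\psi_2}=\phi$, whence $\psi_1=\psi_2=\phi_x$. (Equivalently, the GNS representation of $\phi\circ\mathrm{ev}_x$ is $\pi_\phi\circ\mathrm{ev}_x$, which has the same image, hence the same commutant, as the irreducible $\pi_\phi$.)

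\emph{Second statement.} Let $\mu\in\StateSpace(C(X,\A))$ be pure with GNS triple $(\pi,H,\xi)$; then $\pi$ is irreducible, so $\pi(C(X,\A))'=\C1_H$. The subalgebra $Z:=C(X,\C1_\A)$ is central in $C(X,\A)$, so $\pi(Z)$ commutes with $\pi(C(X,\A))$ and therefore $\pi(Z)\subseteq\C1_H$. Identifying $Z\cong C(X)$ via $f\mapsto f\cdot 1_\A$, the assignment $f\mapsto\chi(f)$ with $\pi(f\cdot1_\A)=\chi(f)1_H$ is a character of $C(X)$, hence evaluation at some $x\in X$, so $\pi(f\cdot1_\A)=f(x)1_H$ for all $f\in C(X)$. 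Now I would show $\mu|_{J_x}=0$: given $0\leq a\in J_x$ and $\varepsilon>0$, the set $U=\{y\in X:\|a(y)\|_\A<\varepsilon\}$ is an open neighbourhood of $x$, so by Urysohn's lemma there is $f\in C(X)$ with $0\leq f\leq1$, $f(x)=1$, and $\operatorname{supp}(f)\subseteq U$; then $\|(f\cdot1_\A)\,a\,(f\cdot1_\A)\|_{C(X,\A)}\leq\varepsilon$, while $\mu\big((f\cdot1_\A)a(f\cdot1_\A)\big)=\langle f(x)^2\pi(a)\xi,\xi\rangle=\mu(a)$, so $0\leq\mu(a)\leq\varepsilon$; letting $\varepsilon\to0$ gives $\mu(a)=0$, and hence $\mu$ vanishes on the ideal $J_x$. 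Thus $\mu$ factors as $\mu=\phi\circ\mathrm{ev}_x=\phi_x$ for a state $\phi$ on $C(X,\A)/J_x\cong\A$. Finally $\phi$ is pure: if $\phi=t\rho_1+(1-t)\rho_2$ with $\rho_i\in\StateSpace(\A)$, then $\mu=\phi_x=t(\rho_1)_x+(1-t)(\rho_2)_x$, so purity of $\mu$ gives $(\rho_1)_x=(\rho_2)_x=\mu$, and injectivity of $\rho\mapsto\rho\circ\mathrm{ev}_x$ (from surjectivity of $\mathrm{ev}_x$) yields $\rho_1=\rho_2=\phi$.

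The routine points — that $\mathrm{ev}_x$ is a surjective $\ast$-homomorphism with $C(X,\A)/J_x\cong\A$, and that a state vanishing on a closed two-sided ideal descends to a state on the quotient — I would invoke without detailed proof. The main obstacle is the second statement, and within it the crux is locating the point $x$: this is precisely where irreducibility of the GNS representation enters, via the observation that the central copy of $C(X)$ inside $C(X,\A)$ must act by scalars and hence through a single point evaluation; once $x$ is found, the approximation argument with a bump function pinches $\mu$ down to $J_x$ and the rest is formal.
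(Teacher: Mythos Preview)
Your proof is correct and takes a genuinely different route from the paper's. The paper identifies $C(X,\A)$ with the tensor product $\A\otimes C(X)$ via the canonical $\ast$-isomorphism and then invokes a structural result (Brown--Ozawa, Corollary 3.4.3) saying that, because $C(X)$ is commutative, every pure state on $\A\otimes C(X)$ restricts to $\phi\odot\nu$ on the algebraic tensor product for some pure $\phi$ on $\A$ and pure $\nu$ on $C(X)$; since pure states on $C(X)$ are Dirac masses, this yields $\mu=\phi_x$ after a density argument. The first assertion is then obtained essentially as a by-product of this characterization together with the same tensor machinery.

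By contrast, you avoid tensor products entirely. Your first part is a direct convex-combination argument showing that pulling a pure state back along a surjective $\ast$-homomorphism preserves purity. Your second part locates the point $x$ by observing that $C(X,\C1_\A)$ sits \emph{centrally} in $C(X,\A)$, so under the irreducible GNS representation it must land in the scalars and hence act through a single character $\delta_x$; a bump-function approximation then forces $\mu$ to vanish on the ideal $J_x$, after which the factorization and purity of $\phi$ are formal. This is more self-contained --- it needs only GNS and Urysohn, not the cited tensor-product result --- and it makes the role of commutativity (centrality of $C(X)$) completely transparent. The paper's approach is shorter to write down precisely because it outsources the main step to a standard reference, while yours would be preferable in a setting where one wants to keep the argument elementary or where the tensor-product description is less convenient.
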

\begin{proof}
It is clear that $\phi_x$ defines a state on $C(X, \A)$ for all $x \in X$ and all pure states $\phi$ on $\A$.  We show that $\phi_x$ is a pure state by verifying the final statement in the lemma. 

Let $\A \odot C(X)$ denote the algebra over $\C$ formed over the algebraic tensor product of $\A$ and $C(X)$ \cite[Section 3.1]{Brown-Ozawa}. Let $\A \otimes C(X)$ denote the C*-algebra formed by the  completion of $ \A \odot C(X)$ with respect to a C*-norm.  This C*-norm exists and is unique since $C(X)$ is commutative, which is why we do not decorate the tensor (see \cite[Proposition 2.4.2 and Proposition 3.6.12]{Brown-Ozawa}). Note that $\A \odot C(X)$  is dense in $\A \otimes C(X)$ by definition.

Let $T: \A \otimes C(X) \rightarrow C(X,\A)$ denote the canonical *-isomorphism for which on elementary tensors is given by $T(a\otimes f) = a\cdot f$, where $a\cdot f: x \in X \mapsto a\cdot f(x) \in \A$ (see \cite[Theorem 6.4.17]{Murphy90}). Let $\mu$ be a pure state on $C(X, \A)$. Then $\mu\circ T$ is a pure state on $\A \otimes C(X)$.  Now, since $C(X)$ is commutative, there exists a pure state  $\phi$ on $\A$  and a pure state $\nu$ on $C(X)$ such that $(\mu \circ T)|_{\A \odot C(X)}=\phi \odot \nu$ by \cite[Corollary 3.4.3]{Brown-Ozawa}, where $\phi \odot \nu$ is a complex-valued linear map on $\A \odot C(X)$ given on elementary tensors by $(\phi \odot \nu)(a \otimes f)=\phi(a)\nu(f)$ (we do not distinguish elementary tensors with $\odot$ since these elements are in $\A \odot C(X) \subseteq \A \otimes C(X)$ by definition). However, as $\nu$ is a pure state on $C(X)$, there exists $x \in X$ such that $\nu=\delta_x$, where $\delta_x: f \in C(X) \mapsto f(x) \in \C$ is the Dirac point mass at $x$ (combine \cite[Thereom VII.8.7]{Conway90} and \cite[Theorem 5.1.6]{Murphy90}). And, thus $(\mu \circ T)|_{\A \odot C(X)}= \phi \odot \delta_x$. Hence:
\begin{align*}
\mu (T(a \otimes f))&=\mu\circ T (a \otimes f)=(\phi \odot \delta_x)(a\otimes f)=\phi(a)\delta_x(f)\\
& = \phi(a)f(x)=\phi(a\cdot f(x))=\phi(T(a\otimes f)(x))=\phi_x(T(a\otimes f))
\end{align*}
for all $a \in \A, f \in C(X)$ since $a\otimes f \in \A \odot C(X)$.
By linearity, it is immediate that $\mu$ and $\phi_x$ agree on $T(\A \odot C(X))$. However, as $T$ is a *-isomorphism, we have that $T(\A \odot C(X))$ is dense in $C(X, \A)$, which implies that $\mu=\phi_x$ on $C(X,\A)$ by continuity of states. This completes the proof.
\end{proof}
The next lemma serves as a tool that reduces the problem of finding upper bounds to the Monge-Kantorovich metric in general and regardless of whether it metrizes the weak* topology, which is valid for more general C*-algebras than $C(X,\A)$.
\begin{lemma}\label{pure-mk-metric-lemma}
Let $\A$ be a unital C*-algebra.  If $\Lip$ is a seminorm defined on $\sa{\A}$ such that $\dom{\Lip}$ is dense in $\sa{\A}$, then if $D \in [0,\infty]$ such that $\Kantorovich{\Lip}(\mu,\nu)\leq D$ for all pure states $\mu,\nu$ on $\A$, then $\Kantorovich{ \Lip}(\mu,\nu)\leq D$ for all states $\mu,\nu \in \StateSpace(\A).$
\end{lemma}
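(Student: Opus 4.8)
The plan is to reduce the supremum defining $\Kantorovich{\Lip}$ to a single element and then use that the state space of a unital C*-algebra is the weak* closed convex hull of its pure states. Recall from Definition \ref{Monge-Kantorovich-def} that
\[
\Kantorovich{\Lip}(\mu,\nu) = \sup\left\{ |\mu(a) - \nu(a)| : a \in \dom{\Lip},\ \Lip(a) \leq 1 \right\},
\]
so it is enough to fix $a \in \dom{\Lip}$ with $\Lip(a) \leq 1$ and show $|\mu(a) - \nu(a)| \leq D$ for all $\mu,\nu \in \StateSpace(\A)$; taking the supremum over all such $a$ then gives the conclusion. We may assume $D < \infty$, as otherwise there is nothing to prove.

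First I would note that, since $a \in \dom{\Lip} \subseteq \sa{\A}$, every state of $\A$ takes a real value at $a$, so the set $S_a = \{\varphi(a) : \varphi \text{ a pure state on } \A\}$ is a subset of $\R$, bounded by $\|a\|_\A$. The hypothesis says exactly that $S_a$ has diameter at most $D$, hence its closed convex hull $\co{S_a}$ is a compact interval of length at most $D$.

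Next I would bring in the general state space. The evaluation functional $\mathrm{ev}_a \colon \varphi \mapsto \varphi(a)$ is affine and weak*-continuous on $\StateSpace(\A)$. Since $\A$ is unital, $\StateSpace(\A)$ is weak*-compact and convex, and by the Krein--Milman theorem it is the weak* closed convex hull of its extreme points, which are precisely the pure states of $\A$. Because a weak*-continuous affine map carries the (weak* closed) convex hull of a set into the closed convex hull of its image, we get $\mathrm{ev}_a(\StateSpace(\A)) \subseteq \co{S_a}$. In particular $\mu(a)$ and $\nu(a)$ lie in a common interval of length at most $D$, so $|\mu(a) - \nu(a)| \leq D$, which is what we wanted.

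The only step that takes any thought is the last one --- passing from pure states to all states via the convex-hull description. The cleanest route is the Krein--Milman reduction above; an alternative that sidesteps extreme-point theory on the full state space is to invoke the spectral fact that for self-adjoint $a$ one has $\sup\{\varphi(a) : \varphi \in \StateSpace(\A)\} = \max\,\mathrm{spec}(a)$ with the maximum attained at some pure state, and symmetrically for the infimum, so that the range of $\mathrm{ev}_a$ over all states already coincides with its range over pure states. Either way the argument is short; I expect the density hypothesis on $\dom{\Lip}$ to play no role here beyond ensuring, via Remark \ref{mk-metric-dense-remark}, that $\Kantorovich{\Lip}$ is a genuine extended metric on $\StateSpace(\A)$ in the first place.
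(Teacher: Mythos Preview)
Your proof is correct and, at its core, rests on the same structural fact as the paper's: the state space of a unital C*-algebra is the weak*-closed convex hull of its pure states. The execution differs, however. The paper first treats $\mu,\nu$ lying in the \emph{algebraic} convex hull of the pure states, writing $\mu-\nu=\sum_j r_j(\phi_j-\psi_j)$ with $\phi_j,\psi_j$ pure and $\sum_j r_j=1$ (via the Minkowski-sum identity $\mathrm{co}(A-A)=\mathrm{co}(A)-\mathrm{co}(A)$), estimates $|\mu(a)-\nu(a)|\leq\sum_j r_j|\phi_j(a)-\psi_j(a)|\leq D$ for each admissible $a$, and only then passes to the weak*-closure by approximating general states with nets. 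You instead fix $a$ at the outset and push the entire problem forward to $\R$ via the weak*-continuous affine map $\mathrm{ev}_a$, so that the image of $\StateSpace(\A)$ lands in the closed convex hull of $S_a$, an interval of length at most $D$. Your route is shorter and sidesteps both the convex-hull arithmetic on differences and the explicit two-stage ``convex combinations, then nets'' argument; the paper's route is more hands-on and keeps all the convexity manipulations in the dual. Your spectral alternative is also valid and in fact shows more: for self-adjoint $a$ the range of $\mathrm{ev}_a$ over all states already coincides with its range over pure states. Finally, your reading of the density hypothesis matches the paper's: it is invoked only (via Remark~\ref{mk-metric-dense-remark}) to guarantee that $\Kantorovich{\Lip}$ is a genuine extended metric, not in the bound itself.
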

\begin{proof}The conditions on $\Lip$ are given so that $\Kantorovich{\Lip}$ is a metric (possibly taking value $+\infty$) on $\StateSpace(\A)$ (see Remark (\ref{mk-metric-dense-remark})).
Let $A$ be a subset of a vector space $V$.  Denote the Minkowski sum of $A$ with itself by $A+A=\{a+b\in V:a \in A, b \in A \}$ and denote $A-A=\{a-b\in V:a \in A, b \in A \}$ and $-A=\{-a\in V:a \in A\}$.  Let $\mathrm{co}(A)$ denote the convex hull of $A$.  It is a routine exercise to show that $\mathrm{co}(A-A)=\mathrm{co}(A+(-A))=\mathrm{co}(A)+\mathrm{co}(-A)=\mathrm{co}(A)-\mathrm{co}(A)$.

If $D=\infty$, then the conclusion is clear, so assume that $D \in [0,\infty)$.  Now, let $\mathscr{PS}(\A)$ denote the pure states of $\A$.  First, assume that $\mu,\nu \in \StateSpace(\A)$ such that $\mu,\nu\in \mathrm{co}(\mathscr{PS}(\A))$. In particular, we have $\mu-\nu \in \mathrm{co}(\mathscr{PS}(\A))-\mathrm{co}(\mathscr{PS}(\A))=\mathrm{co}(\mathscr{PS}(\A)-\mathscr{PS}(\A))$ by the first paragraph. Hence, there exist $n \in \N$ and  $\phi_0, \ldots, \phi_n,\psi_0, \ldots, \psi_n \in \mathscr{PS}(\A)$ and $r_0, \ldots, r_n \in [0,1]$ with $1=\sum_{j=0}^n r_j $ and $\mu-\nu=\sum_{j=0}^n r_j (\phi_j-\psi_j)$. Fix $a \in \dom{\Lip}, \Lip(a) \leq 1$:
\begin{align*}
|\mu(a) -\nu(a)|& =|(\mu-\nu)(a)| =\left| \sum_{j=0}^n r_j (\phi_j-\psi_j)(a) \right|\\
& =\left| \sum_{j=0}^n r_j (\phi_j(a)-\psi_j(a)) \right| \leq   \sum_{j=0}^n r_j \left| (\phi_j(a)-\psi_j(a)) \right| \leq \sum_{j=0}^n r_j\cdot D = D.
\end{align*}
Now, let $\mu,\nu \in \StateSpace(\A)$.  Since $\A$ is unital, we have that $\mu,\nu \in \overline{\mathrm{co}(\mathscr{PS}(\A))}^{weak^*}$ by \cite[Corollary 5.1.10]{Murphy90}. Thus, there exist nets $(\mu_\lambda)_{\lambda \in \Delta}, (\nu_\gamma)_{\gamma\in \Gamma}\subseteq \mathrm{co}(\mathscr{PS}(\A))$ that converge to $\mu,\nu$, respectively, in the weak* topology.  However, by the above, we have $|\mu_\lambda(a)-\nu_\gamma(a)|\leq D$ for all $\lambda \in \Delta, \gamma \in \Gamma$.  Hence, we gather that $|\mu(a) -\nu(a)|\leq D$ by definiton of the weak* topology.  Since $a \in \dom{\Lip}, \Lip(a) \leq 1$ was arbitrary, we have $\Kantorovich{\Lip}(\mu,\nu) \leq D$, which completes the proof since $\mu,\nu \in \StateSpace(\A)$ were arbitrary.
\end{proof}

We can now combine these results to provide upper bounds for the diameter of the Monge-Kantorovich metric for our seminorms on $C(X,\A)$.
\begin{proposition}\label{mk-metric-diam-lemma}
Let $(X, \mathsf{d}_X)$ be a compact metric space and  $\A$ be a unital C*-algebra. Let $\varphi \in \StateSpace(C(X,\A)).$    

Using notation from Defintion (\ref{homog-lipschitz-def}), if  $\Vert \cdot \Vert_\mathsf{n}$ is any norm on $\A$ over $\R$ or $\C$ equivalent to $\A$ so that there exist $M>0, N>0$ such that $M \Vert \cdot \Vert_{\mathsf{n}} \leq \Vert \cdot \Vert_\A \leq N \Vert \cdot \Vert_\mathsf{n}$,  then if:
\begin{enumerate}
\item
\begin{enumerate}
\item $q=C(X)$, then $\diam{\StateSpace(C(X, \A))}{\Kantorovich{\Lip_{\mathsf{d}_X}^{(\mathsf{n}),q}}} \leq 2+N\cdot \diam{X}{ \mathsf{d}_X} $

\item and if $\A=\C$, then $\diam{\StateSpace(C(X, \A))}{\Kantorovich{\Lip_{\mathsf{d}_X}^{(\mathsf{n}),q}}} \leq N\cdot \diam{X}{ \mathsf{d}_X};$
\end{enumerate}
\item $q=\C$ or $q=\varphi$, then $\diam{\StateSpace(C(X, \A))}{\Kantorovich{\Lip_{\mathsf{d}_X}^{(\mathsf{n}),q}}} \leq 2.$
\end{enumerate}
\end{proposition}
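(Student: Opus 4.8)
The plan is to reduce the computation to pure states and then use the explicit description of pure states on $C(X,\A)$. Since $\Vert\cdot\Vert_{\mathsf{n}}$ is equivalent to $\Vert\cdot\Vert_\A$, Lemma (\ref{dom-dense-lemma}) tells us $\dom{\Lip_{\mathsf{d}_X}^{(\mathsf{n}),q}}$ is dense in $\sa{C(X,\A)}$, so the hypotheses of Lemma (\ref{pure-mk-metric-lemma}) hold for $\Lip_{\mathsf{d}_X}^{(\mathsf{n}),q}$; hence it suffices to bound $\Kantorovich{\Lip_{\mathsf{d}_X}^{(\mathsf{n}),q}}(\mu,\nu)$ for pure states $\mu,\nu$ on $C(X,\A)$. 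By Lemma (\ref{pure-state-lemma}) I may write $\mu=\phi_x$ and $\nu=\psi_y$ with $x,y\in X$ and $\phi,\psi$ pure states on $\A$. So fix $a\in\dom{\Lip_{\mathsf{d}_X}^{(\mathsf{n}),q}}$ with $\Lip_{\mathsf{d}_X}^{(\mathsf{n}),q}(a)\leq 1$; the whole proof is to estimate $|\phi(a(x))-\psi(a(y))|$, and since we only seek upper bounds, self-adjointness of $a$ and non-attainment of quotient infima will be harmless.

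For case (2), if $q=\varphi$ then $\Lip_{\mathsf{d}_X}^{(\mathsf{n}),q}(a)\leq 1$ forces $\Vert a-\varphi(a)1_{C(X,\A)}\Vert_{C(X,\A)}\leq 1$, and since $\mu,\nu$ are unital the scalar $\varphi(a)$ cancels, so $|\mu(a)-\nu(a)|=|\mu(a-\varphi(a)1_{C(X,\A)})-\nu(a-\varphi(a)1_{C(X,\A)})|\leq 2\Vert a-\varphi(a)1_{C(X,\A)}\Vert_{C(X,\A)}\leq 2$, using that states have norm one. If $q=\C$ the same runs after choosing, for each $\varepsilon>0$, a scalar $\lambda$ with $\Vert a-\lambda 1_{C(X,\A)}\Vert_{C(X,\A)}\leq 1+\varepsilon$ and then letting $\varepsilon\to 0$.

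For case (1), $q=C(X)$, I would split $\phi(a(x))-\psi(a(y))=\bigl(\phi(a(x))-\phi(a(y))\bigr)+\bigl(\phi(a(y))-\psi(a(y))\bigr)$. The first bracket is handled by the Lipschitz part of the seminorm: $|\phi(a(x)-a(y))|\leq\Vert a(x)-a(y)\Vert_\A\leq N\Vert a(x)-a(y)\Vert_{\mathsf{n}}\leq N\,l_{\mathsf{d}_X}^{(\mathsf{n})}(a)\,\mathsf{d}_X(x,y)\leq N\diam{X}{\mathsf{d}_X}$. For the second bracket, given $\varepsilon>0$ choose $c\in C(X,\C1_\A)$ with $\Vert a-c\Vert_{C(X,\A)}\leq 1+\varepsilon$; writing $c(y)=\lambda 1_\A$, unitality of $\phi$ and $\psi$ gives $\phi(a(y))-\psi(a(y))=\phi(a(y)-c(y))-\psi(a(y)-c(y))$, of modulus at most $2\Vert a-c\Vert_{C(X,\A)}\leq 2(1+\varepsilon)$. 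Summing and letting $\varepsilon\to 0$ gives $|\mu(a)-\nu(a)|\leq 2+N\diam{X}{\mathsf{d}_X}$, which is (1)(a). For (1)(b), when $\A=\C$ we have $C(X,\C1_\A)=C(X,\A)$, so the quotient term is identically $0$ and $\Lip_{\mathsf{d}_X}^{(\mathsf{n}),q}=l_{\mathsf{d}_X}^{(\mathsf{n})}$; moreover the pure states of $C(X)$ are point evaluations (Lemma (\ref{pure-state-lemma}) with $\A=\C$), so $|\mu(a)-\nu(a)|=|a(x)-a(y)|\leq N\diam{X}{\mathsf{d}_X}$ by the first-bracket estimate alone. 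In every case Lemma (\ref{pure-mk-metric-lemma}) promotes the bound from pure states to all of $\StateSpace(C(X,\A))$, and taking the supremum over admissible $a$ bounds the diameter.

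I do not expect a real obstacle here: the argument is essentially bookkeeping. The only mild technical point is that the quotient norms appearing in the definition of $\Lip_{\mathsf{d}_X}^{(\mathsf{n}),q}$ are infima that need not be attained, which I handle uniformly by an $\varepsilon$-approximation and a limit. The one genuinely structural observation is the role of the subalgebra in the $q=C(X)$ case: quotienting by $C(X,\C1_\A)$ (and not by all of $C(X,\A)$) is exactly what forces the "off-diagonal in $\A$" discrepancy $\phi(a(y))-\psi(a(y))$ to collapse to something of size $2$, using only that $\phi,\psi$ are unital on $\A$, while when $\A=\C$ this discrepancy vanishes entirely, which explains why the additive $2$ drops in (1)(b).
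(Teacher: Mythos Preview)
Your proof is correct and follows essentially the same approach as the paper: reduce to pure states via Lemmas (\ref{pure-state-lemma}) and (\ref{pure-mk-metric-lemma}), split $|\phi_x(a)-\psi_y(a)|$ into a ``same-point'' piece controlled by the quotient term and a ``same-state'' piece controlled by the Lipschitz term, and handle the quotient norm by an $\varepsilon$-approximation. The only noteworthy difference is in case (2): the paper does not go through pure states there but simply invokes \cite[Proposition 1.6]{Rieffel98a} together with the observation $\|a+\C1_{C(X,\A)}\|_{C(X,\A)/\C1_{C(X,\A)}}\leq\Lip_{\mathsf{d}_X}^{(\mathsf{n}),q}(a)$, whereas you give the direct two-line argument for arbitrary states (your computation uses only unitality, not purity). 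Both routes are equally short; yours is more self-contained, the paper's is a cleaner citation.
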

\begin{proof}
We begin by noting that as $\dom{\Lip_{\mathsf{d}_X}^{(\mathsf{n}),q}}$ is dense in $\sa{C(X,\A)}$ by Lemma (\ref{dom-dense-lemma}), we have that $\Kantorovich{\Lip_{\mathsf{d}_X}^{(\mathsf{n}),q}}$ is a metric on $\StateSpace(C(X, \A))$ (possibly taking value $+\infty$) --- see Remark (\ref{mk-metric-dense-remark}). 

For (1)(a) and (1)(b), assume that $q=C(X)$, we will combine both Lemma (\ref{pure-state-lemma}) and Lemma (\ref{pure-mk-metric-lemma}) to achieve the result. First, we establish (1)(a). Thus, let $\mu, \nu$ be pure states on $C(X,\A)$.  By Lemma (\ref{pure-state-lemma}), there exist $x,y\in X$ and pure states $\phi, \psi$ on $\A$ such that $\mu=\phi_x$ and $\nu=\psi_y$.  Fix $a \in \dom{\Lip_{\mathsf{d}_X}^{(\mathsf{n}),q}}, \Lip_{\mathsf{d}_X}^{(\mathsf{n}),q}(a)\leq 1$.  Let $\varepsilon>0$.   Since $q=C(X)$, we have that there exists $b \in C\left(X, \C1_\A\right)$ such that $\|a-b\|_{C(X,A)} \leq 1+\varepsilon$.  Therefore, as $b(x) \in \C1_\A$, we have  $|\phi_x(b)-\psi_x(b)|= |\phi(b(x))-\psi(b(x))|=0$ since $\phi,\psi$ are states on $\A$ and agree on scalars of $\A$.  Therefore:
\begin{equation}\label{quo-lip-eq}
\begin{split}
|\phi_x(a)-\psi_x(a) |&=|\phi_x(a)-\phi_x(b)+\psi_x(b)-\psi_x(a)|\\
& \leq |\phi_x(a-b)| +|\psi_x(a-b)| \leq 2 \cdot \|a-b\|_\A  \leq 2(1+\varepsilon)=2+2\varepsilon.
\end{split}
\end{equation}
Since $\varepsilon>0$ was arbitrary, we have that $|\phi_x(a)-\psi_x(a)|\leq 2$.  Furthermore, since $\Lip_{\mathsf{d}_X}^{(\mathsf{n}),q}(a)\leq 1$, we gather:
\begin{align*}
|\psi_x(a)-\psi_y(a) |& =|\psi(a(x))-\psi(a(y))| = |\psi(a(x)-a(y))|\leq \|a(x)-a(y)\|_\A \\
& \leq N \cdot \|a(x)-a(y)\|_\mathsf{n}  \leq N \cdot \mathsf{d}_X(x,y) \leq N \cdot \diam{X}{\mathsf{d}_X},
\end{align*}
and:
\begin{align*}
|\mu(a)-\nu(a)|& = |\phi_x(a) - \psi_y(a)|\\
& \leq |\phi_x(a) -\psi_x(a)|+|\psi_x(a)-\psi_y(a)| \leq 2+N \cdot \diam{X}{\mathsf{d}_X}.
\end{align*}
Since $a \in \dom{\Lip_{\mathsf{d}_X}^{(\mathsf{n}),q}}, \Lip_{\mathsf{d}_X}^{(\mathsf{n}),q}(a)\leq 1$ was arbitrary, we have that $\Kantorovich{\Lip_{\mathsf{d}_X}^{(\mathsf{n}),q}}(\mu,\nu) \leq 2+N \cdot \diam{X}{\mathsf{d}_X}.$  Thus (1)(a) is proven by Lemma (\ref{pure-state-lemma}) as $\mu,\nu$ were arbitrary pure states on $C(X,\A)$.  For (1)(b), if $\A=\C$, then note that Expression (\ref{quo-lip-eq}) would be $0$ as there is only one state on $\C$.  This then proves (1)(b).

For (2), the case $q=\C$ is immediate from \cite[Proposition 1.6]{Rieffel98a} and that $\|a+\C1_\A \|_{\A/\C1_\A} \leq \Lip(a)$ for all  $a \in \sa{\A}$.

 For the case $q=\varphi$, the fact that:
\begin{align*} \left\|a+\C1_{C(X,\A)}\right\|_{C(X,\A)/\C1_{C(X,\A)}}&=\inf_{b \in \C1_{C(X,\A)}} \|a-b\|_{C(X,\A)}  \leq \left\| a-\varphi(a)1_{C(X,\A)}\right\|_{C(X,\A)}
\end{align*} for all $a \in \sa{\A}$ completes the proof again by the argument of case $q=\C$.
\end{proof}

Now, we are prepared  to prove our main result of this section, which is that the seminorms of Definition (\ref{homog-lipschitz-def}) will be quasi-Leibniz Lip-norms for $C(X,\A)$ if and only if $\A$ is finite-dimensional, and we note that this is the first result that not only assumes finite-dimensionality of $\A$, but also requires finite-dimensionality of $\A$.
\begin{theorem}\label{fd-thm}
Let $(X, \mathsf{d}_X)$ be a compact metric space and let $\A$ be a unital C*-algebra and let $\mu \in \StateSpace(C(X,\A))$ be a state.  

 Using notation from Defintion (\ref{homog-lipschitz-def}),  the following three statements are equivalent:
 \begin{itemize}
 \item[(i)]  the pair $\left(C(X, \A) , \Lip_{\mathsf{d}_X}^{(\mathsf{n}),q} \right)$ is a quasi-Leibniz  compact quantum metric space;
 \item[(ii)]  $\A$ is finite- dimensional; 
 \item[(iii)]  $C(X,\A)$ is a standard homogeneous C*-algebra of Definition (\ref{standard-h-def}).
 \end{itemize}
 
 Furthermore, if $\A$ is finite-dimensional and   $M>0, N>0$ such that $M \cdot \Vert \cdot \Vert_\mathsf{n} \leq \Vert \cdot \Vert_\A \leq N\cdot \Vert \cdot \Vert_\mathsf{n}$, then:
\begin{enumerate}
\item if $q$ is either $C(X)$ or  $\C$, then  $\Lip_{\mathsf{d}_X}^{(\mathsf{n}),q}$ is a $\left(N/M,0\right)$-quasi-Leibniz Lip-norm;
\item if $q=\mu$, then $\Lip_{\mathsf{d}_X}^{(\mathsf{n}),q}$ is a $\left(\max \{N/M, 2\},0\right)$-quasi-Leibniz Lip-norm.
\end{enumerate}
\end{theorem}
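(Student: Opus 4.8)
The plan is to establish the cycle of equivalences together with the final quasi-Leibniz bounds. The equivalence (ii)$\iff$(iii) is immediate from Definition \ref{standard-h-def}, since a standard homogeneous C*-algebra is by definition of the form $C(X,\A)$ with $X$ compact metric and $\A$ finite-dimensional, and conversely; so the content lies in (i)$\iff$(ii), which I would prove as (ii)$\Rightarrow$(i) directly and (i)$\Rightarrow$(ii) by contraposition. Once (ii)$\Rightarrow$(i) is in hand, the ``furthermore'' clause is essentially free: the quasi-Leibniz constants $(N/M,0)$ and $(\max\{N/M,2\},0)$ were already produced in Proposition \ref{Leibniz-homog-norm-prop} under the sole hypothesis that $\Vert\cdot\Vert_\mathsf{n}$ is equivalent to the C*-norm, which is automatic when $\dim\A<\infty$.

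For (ii)$\Rightarrow$(i) I would verify the hypotheses of characterization (2) of Theorem \ref{Rieffel-thm}. Proposition \ref{Leibniz-homog-norm-prop} gives $\ker\Lip_{\mathsf{d}_X}^{(\mathsf{n}),q}=\C1_{C(X,\A)}$ and that $\dom{\Lip_{\mathsf{d}_X}^{(\mathsf{n}),q}}$ is a dense unital subspace, Lemma \ref{dom-dense-lemma} gives lower semi-continuity, and Proposition \ref{mk-metric-diam-lemma} gives that $\Kantorovich{\Lip_{\mathsf{d}_X}^{(\mathsf{n}),q}}$ is bounded. It then remains to exhibit $r>0$ for which $B_r=\{a\in\dom{\Lip_{\mathsf{d}_X}^{(\mathsf{n}),q}}:\Lip_{\mathsf{d}_X}^{(\mathsf{n}),q}(a)\le 1,\ \Vert a\Vert_{C(X,\A)}\le r\}$ is totally bounded in $C(X,\A)$, and here I would invoke a vector-valued Arzel\`a--Ascoli argument. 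For $a\in B_r$ one has $l_{\mathsf{d}_X}^{(\mathsf{n})}(a)\le\Lip_{\mathsf{d}_X}^{(\mathsf{n}),q}(a)\le 1$, so $\Vert a(x)-a(y)\Vert_\A\le N\Vert a(x)-a(y)\Vert_\mathsf{n}\le N\,\mathsf{d}_X(x,y)$ for all $x,y\in X$; thus $B_r$ is equi-Lipschitz, hence equicontinuous, it is uniformly bounded by $r$, and for each $x\in X$ the orbit $\{a(x):a\in B_r\}$ lies in the closed ball of radius $r$ of $\A$, which is compact precisely because $\A$ is finite-dimensional. The Arzel\`a--Ascoli theorem for continuous maps from the compact space $X$ into the complete space $\A$ then yields that $B_r$ is totally bounded, and together with the preceding facts characterization (2) of Theorem \ref{Rieffel-thm} is met.

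For (i)$\Rightarrow$(ii) I would argue by contraposition: assuming $\dim\A=\infty$, I would show the set in characterization (3) of Theorem \ref{Rieffel-thm} is not totally bounded, which — since a compact quantum metric space satisfies the standing hypotheses of that theorem by Definition \ref{Monge-Kantorovich-def}, hence its characterization (3) — prevents $(C(X,\A),\Lip_{\mathsf{d}_X}^{(\mathsf{n}),q})$ from being a compact quantum metric space. The key is to restrict to the unital C*-subalgebra $\A_c\subseteq C(X,\A)$ of constant functions, which is isometrically $\ast$-isomorphic to $\A$. On a self-adjoint constant $c$ one has $l_{\mathsf{d}_X}^{(\mathsf{n})}(c)=0$, so $\Lip_{\mathsf{d}_X}^{(\mathsf{n}),q}(c)$ reduces to its ``$q$-term'', and I would check that for $q\in\{C(X),\C\}$ this equals $\Vert c+\C1_\A\Vert_{\A/\C1_\A}=\Vert c+\R1_\A\Vert_{\sa{\A}/\R1_\A}$ (for a constant function every competitor in $C(X,\C1_\A)$ is pointwise no closer than the best scalar multiple of $1_\A$), while for $q=\mu$ it equals $\Vert c-\phi(c)1_\A\Vert_\A$ with $\phi=\mu|_{\A_c}$ a state on $\A$. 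Since the quotient map $\sa{C(X,\A)}\to\sa{C(X,\A)}/\R1_{C(X,\A)}$ sends a self-adjoint constant $c$ to an element of norm $\Vert c+\R1_\A\Vert_{\sa{\A}/\R1_\A}$, the space $\sa{\A}/\R1_\A$ embeds isometrically into $\sa{C(X,\A)}/\R1_{C(X,\A)}$. For $q\in\{C(X),\C\}$ the image of $\{a+\R1_{C(X,\A)}:\Lip_{\mathsf{d}_X}^{(\mathsf{n}),q}(a)\le 1\}$ then contains the whole unit ball of the infinite-dimensional normed space $\sa{\A}/\R1_\A$, hence is not totally bounded; for $q=\mu$ it contains the image of the unit ball of the infinite-dimensional closed hyperplane $\ker\phi\cap\sa{\A}$ under the quotient map, which — since $\R1_\A\cap\ker\phi=\{0\}$ (as $\phi(1_\A)=1$) and this map is a continuous linear bijection of Banach spaces onto $\sa{\A}/\R1_\A$, hence a topological isomorphism by the open mapping theorem — carries the non-totally-bounded unit ball to a non-totally-bounded set. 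In every case characterization (3) of Theorem \ref{Rieffel-thm} fails, forcing $\A$ to be finite-dimensional.

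I expect the two compactness analyses to be the crux. On the (ii)$\Rightarrow$(i) side the point is to arrange the Arzel\`a--Ascoli argument so that finite-dimensionality of $\A$ is precisely what makes the pointwise orbits relatively compact and therefore a norm-bounded Lipschitz ball totally bounded; on the (i)$\Rightarrow$(ii) side the delicate points are identifying the $q$-term of $\Lip_{\mathsf{d}_X}^{(\mathsf{n}),q}$ on constant functions with the right quotient norm and, when $q=\mu$, transferring non-total-boundedness of the unit ball of $\ker\phi\cap\sa{\A}$ through the quotient map via the open mapping theorem. By contrast the algebraic and lower-semicontinuity bookkeeping, the density of the domain, the quasi-Leibniz constants, and the boundedness of the Monge--Kantorovich metric are all already available from Proposition \ref{Leibniz-homog-norm-prop}, Lemma \ref{dom-dense-lemma}, and Proposition \ref{mk-metric-diam-lemma}, so the genuinely new work is confined to these two arguments.
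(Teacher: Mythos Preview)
Your proposal is correct and the (ii)$\Rightarrow$(i) direction is identical to the paper's: both use Proposition~\ref{Leibniz-homog-norm-prop}, Lemma~\ref{dom-dense-lemma}, Proposition~\ref{mk-metric-diam-lemma}, and characterization~(2) of Theorem~\ref{Rieffel-thm} together with a vector-valued Arzel\`a--Ascoli argument. (One small misattribution: density of the domain is Lemma~\ref{dom-dense-lemma}, not Proposition~\ref{Leibniz-homog-norm-prop}; the latter only gives that it is a unital $\ast$-subalgebra.)

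For (i)$\Rightarrow$(ii) the paper takes a shorter route than your contrapositive via characterization~(3). It argues directly with characterization~(2): knowing that $\mathcal{B}_{E,F}=\{a\in\sa{C(X,\A)}:\Lip_{\mathsf{d}_X}^{(\mathsf{n}),q}(a)\le E,\ \Vert a\Vert_{C(X,\A)}\le F\}$ is totally bounded for all $E,F>0$, the paper simply observes that on a constant $c$ one has $l_{\mathsf{d}_X}^{(\mathsf{n})}(c)=0$ and bounds the $q$-term crudely by $\Vert c\Vert_{C(X,\A)}$ (or $2\Vert c\Vert_{C(X,\A)}$ when $q=\mu$), so the closed unit ball of $\sa{\A}$, viewed as constants, sits inside $\mathcal{B}_{1,1}$ (respectively $\mathcal{B}_{2,1}$) and is therefore totally bounded, forcing $\dim\A<\infty$. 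This avoids computing the $q$-term on constants exactly, avoids passing to the quotient $\sa{\A}/\R1_\A$, and in particular avoids the open mapping theorem manoeuvre you need for $q=\mu$. Your approach has the merit of identifying precisely what the seminorm does on constants, but the paper's crude upper bound is enough and keeps the argument uniform and very short.
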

\begin{proof}
For (ii)$\implies$(i), assume that $\A$ is finite-dimensional, we begin by showing that the pair $\left(C(X, \A) , \Lip_{\mathsf{d}_X}^{(\mathsf{n}),q} \right)$ is a quasi-Leibniz  compact quantum metric space. By Proposition (\ref{Leibniz-homog-norm-prop}) and  Lemma (\ref{dom-dense-lemma}), all that remains is to show that the Monge-Kantorovich metric metrizes the weak* topology of the state space. 
Also, by Proposition (\ref{mk-metric-diam-lemma}) and  equivalence (2) of Theorem (\ref{Rieffel-thm}), we only need to verify that there exists some $D>0$ such that the set:
\begin{equation*}
\mathcal{B}_{1,D}= \left\{ a \in \sa{C(X, \A)} : \Lip_{\mathsf{d}_X}^{(\mathsf{n}),q}(a) \leq 1\text{ and } \Vert a \Vert_{C(X, \A)}\leq D\right\}
\end{equation*}
is totally bounded. 

Let $D \in (0, \infty)$.  The set $\mathcal{B}_{1,D}$ is equicontinuous since $l_{\mathsf{d}_X}^{(\mathsf{n})} (a) \leq 1$ for all $a\in \mathcal{B}_{1,D}$ and $\Vert \cdot \Vert_\mathsf{n}$ is equivalent to $\Vert \cdot \Vert_\A$ by finite-dimensionality.  Next, fix $x \in X$, the set $\{a(x) \in \A : a \in \mathcal{B}_{1,D}\} \subseteq \{ b \in \A : \Vert b \Vert_\A \leq D \}$.   Since $\A$ is finite dimensional, the set $\{a(x) \in \A : a \in \mathcal{B}_{1,D}\}$ is totally bounded  for each $x \in X$. Therefore, by a generalization of the Arzela-Ascoli Theorem \cite[Theorem 7.47.1]{Munkres} and a characterizaion of the topology on $C(X,\A)$ \cite[Theorem 7.46.7 and 7.46.8]{Munkres}, the set $\mathcal{B}_{1,D}$ is totally bounded in $C(X, \A)$  since $X$ is compact. Thus, by Theorem (\ref{Rieffel-thm}), this direction is complete.

For (i)$\implies$(ii), assume that the pair $\left(C(X, \A) , \Lip_{\mathsf{d}_X}^{(\mathsf{n}),q} \right)$ is a quasi-Leibniz  compact quantum metric space.  By Theorem (\ref{Rieffel-thm}), the set $\mathcal{B}_{1,D}$ is totally bounded for some $D \in (0, \infty)$.  Note that it follows that $\mathcal{B}_{E,F}$ is also totally bounded for all $E,F \in (0,\infty)$ by scaling. Now, the space of constant functions of $C(X, \A)$ denoted by $K(X, \A)$ is canonically *-isomorphic to $\A$.  Denote this *-isomorphism by:
\begin{equation*}\kappa : a \in \A \longmapsto (x \mapsto a)\in K(X, \A).
\end{equation*}  
Assume that $a\in \A$ such that $\Vert a \Vert_\A \leq 1$.  Then, we have that $\Vert\kappa(a) \Vert_{C(X, \A)} \leq 1$ and  $l_{\mathsf{d}_X}^{(\mathsf{n})}(\kappa(a))=0$ since $\kappa (a)$ is constant.  First consider when $q=C(X)$ or $q=\C$, then since quotient norms are bounded above by the norm that they are induced by, we have:
\begin{equation*}\Lip_{\mathsf{d}_X}^{(\mathsf{n}),q} (\kappa(a)) \leq \Vert\kappa(a) \Vert_{C(X, \A)} \leq 1.
\end{equation*}  
Therefore, the element $\kappa(a) \in \mathcal{B}_{1,1}$. In particular, the set:
\begin{equation*}
\kappa\left(\{a\in \A : \Vert a \Vert_\A \leq 1\}\right) \subseteq \mathcal{B}_{1,1},
\end{equation*} and thus $\kappa\left(\{a\in \A : \Vert a \Vert_\A \leq 1\}\right)$ is totally bounded, which implies that  $ \{a\in \A : \Vert a \Vert_\A \leq 1\}$ is totally bounded since $\kappa$ is a *-isomorphism.   Now, if $q=\mu$, then we would have that: 
\begin{equation*}\Lip_{\mathsf{d}_X}^\A (\kappa(a)) =\left\Vert \kappa(a)-\mu(\kappa(a))1_{C(X, \A)} \right\Vert_{C(X,\A)} \leq 2\Vert\kappa(a) \Vert_{C(X, \A)} \leq 2
\end{equation*} 
for $a \in \A$ such that $\Vert a \Vert_\A \leq 1$.  Thus, since $\mathcal{B}_{2,1}$ is also totally bounded, the same argument shows that  the unit ball of $\A$  is totally bounded.
 Thus, in either case, the C*-algebra $\A$ is finite-dimensional by \cite[1.9$(d)$]{Rudin91}.  (ii)$\iff$(iii) is given by definition. Statements (1) and (2) immdeitalely follow by Proposition (\ref{Leibniz-homog-norm-prop}).
\end{proof}
\begin{remark}
Theorem (\ref{fd-thm}) does not imply that there cannot be quanutm metric structure on $C(X, \A)$ for $\A$ infinite dimensional.  It simply implies that the family of seminorms of Definition  (\ref{homog-lipschitz-def}) only provide quantum metric structure on $C(X, \A)$, when $\A$ is finite dimensional, and this, in turn,  strongly suggests that our seminorms are a natural choice for the quantum metric structure on $C(X, \A)$, when $\A$ is finite dimensional.
\end{remark}
The following corollary focuses on a particular case of interest and a motivating idea for this paper.  It shows that we can still recover the classical structure of $\left(C(X), \Lip_{\mathsf{d}_X}\right)$ within $\left(C(X, \A) , \Lip_{\mathsf{d}_X}^{(\mathsf{n}),q} \right)$ for any finite-dimensional C*-algebra $\A$ and a particular choice of $\mathsf{n}$ and $q$.
\begin{corollary}\label{C(X)-cor}
If $(X, \mathsf{d}_X)$ is a compact metric space and $\A$ is a finite-dimensional C*-algebra, then using notation from Defintion (\ref{homog-lipschitz-def}), the pair $\left(C(X, \A),\Lip_{\mathsf{d}_X}^{(\A),C(X)}\right)$ is a Leibniz  compact quantum metric space such that $\Lip_{\mathsf{d}_X}^{(\A),C(X)}$ recovers  the standard Lipschitz seminorm on $C(X)$ denoted by $\Lip_{\mathsf{d}_X}$ from the canonical *-isomorphism of $C(X)$ onto $C(X, \C1_\A)$ given by:
\begin{equation*}
\mathfrak{c}_X : f \in C(X) \longmapsto (x \in X \mapsto f(x)1_\A) \in C(X, \C1_\A).
\end{equation*}
Hence, we have  $\Lip_{\mathsf{d}_X} =\Lip_{\mathsf{d}_X}^{(\A),C(X)} \circ  \mathfrak{c}_X$, and thus 
\begin{equation*}
\qpropinquity{}\left(\left(C(X), \Lip_{\mathsf{d}_X}\right),\left(C(X, \C1_\A), \Lip_{\mathsf{d}_X}^{(\A),C(X)}\right)\right)=0
\end{equation*}
In particular $\left(C(X), \Lip_{\mathsf{d}_X}\right)=\left(C(X, \A), \Lip_{\mathsf{d}_X}^{(\A),C(X)}\right)$, when $\A=\C$.
\end{corollary}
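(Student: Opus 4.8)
The plan is to reduce the statement to the already-established Theorem (\ref{fd-thm}) together with one short computation showing that $\mathfrak{c}_X$ intertwines the two seminorms. Observe first that $\Lip_{\mathsf{d}_X}^{(\A),C(X)}$ is the member of the family of Definition (\ref{homog-lipschitz-def}) obtained by taking $\Vert\cdot\Vert_{\mathsf{n}}=\Vert\cdot\Vert_\A$ to be the C*-norm itself (so that one may take $M=N=1$) and $q=C(X)$. Since $\A$ is finite-dimensional, part (1) of Theorem (\ref{fd-thm}) applies with $N/M=1$ and gives at once that $\Lip_{\mathsf{d}_X}^{(\A),C(X)}$ is a $(1,0)$-quasi-Leibniz Lip-norm, i.e.\ a Leibniz Lip-norm, so $\left(C(X,\A),\Lip_{\mathsf{d}_X}^{(\A),C(X)}\right)$ is a Leibniz compact quantum metric space.

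Next I would carry out the intertwining computation. One checks (routinely) that $\mathfrak{c}_X$ is a $*$-isomorphism of $C(X)$ onto $C(X,\C1_\A)$: it is plainly $*$-homomorphic, it is isometric because $\Vert f(x)1_\A\Vert_\A=|f(x)|$ (as $\Vert 1_\A\Vert_\A=1$), and it is onto $C(X,\C1_\A)$ since any $a\in C(X,\C1_\A)$ has the form $x\mapsto\lambda(x)1_\A$ with $\lambda\in C(X)$. Then, for $f\in C(X)$, set $a=\mathfrak{c}_X(f)\in C(X,\C1_\A)$. The quotient term $\Vert a+C(X,\C1_\A)\Vert_{C(X,\A)/C(X,\C1_\A)}$ vanishes because $a$ already lies in $C(X,\C1_\A)$, while
\[
l_{\mathsf{d}_X}^{(\A)}(a)=\sup_{x\neq y}\frac{\Vert f(x)1_\A-f(y)1_\A\Vert_\A}{\mathsf{d}_X(x,y)}=\sup_{x\neq y}\frac{|f(x)-f(y)|}{\mathsf{d}_X(x,y)}=\Lip_{\mathsf{d}_X}(f).
\]
Hence $\Lip_{\mathsf{d}_X}^{(\A),C(X)}(\mathfrak{c}_X(f))=\max\{\Lip_{\mathsf{d}_X}(f),0\}=\Lip_{\mathsf{d}_X}(f)$, which is exactly the claimed identity $\Lip_{\mathsf{d}_X}=\Lip_{\mathsf{d}_X}^{(\A),C(X)}\circ\mathfrak{c}_X$.

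Finally I would conclude as follows. By Theorem (\ref{fd-thm}) applied with $\A$ replaced by $\C$ (equivalently, by the classical facts recalled in the introduction), $\left(C(X),\Lip_{\mathsf{d}_X}\right)$ is a Leibniz compact quantum metric space. Restricting $\Lip_{\mathsf{d}_X}^{(\A),C(X)}$ to the C*-subalgebra $C(X,\C1_\A)$, the previous paragraph says that $\mathfrak{c}_X$ is a $*$-isomorphism with $\Lip_{\mathsf{d}_X}^{(\A),C(X)}\circ\mathfrak{c}_X=\Lip_{\mathsf{d}_X}$, so $\left(C(X),\Lip_{\mathsf{d}_X}\right)$ and $\left(C(X,\C1_\A),\Lip_{\mathsf{d}_X}^{(\A),C(X)}\right)$ are quantum isometric; in particular the latter is itself a Leibniz compact quantum metric space, and part (5) of Theorem-Definition (\ref{def-thm}) yields $\qpropinquity{}\left(\left(C(X),\Lip_{\mathsf{d}_X}\right),\left(C(X,\C1_\A),\Lip_{\mathsf{d}_X}^{(\A),C(X)}\right)\right)=0$. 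For the very last assertion, when $\A=\C$ one has $C(X,\C1_\A)=C(X,\A)=C(X)$ and the quotient $C(X,\A)/C(X,\C1_\A)$ is trivial, so $\Lip_{\mathsf{d}_X}^{(\A),C(X)}=l_{\mathsf{d}_X}^{(\A)}=\Lip_{\mathsf{d}_X}$ and the two pairs literally coincide.

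I do not expect a genuine obstacle here: the substantive work is already done in Theorem (\ref{fd-thm}) and Theorem-Definition (\ref{def-thm}). The only points requiring mild care are the bookkeeping that the quotient seminorm term is identically zero on $C(X,\C1_\A)$, and the observation that the restriction of $\Lip_{\mathsf{d}_X}^{(\A),C(X)}$ to $C(X,\C1_\A)$ is again the data of a (Leibniz) compact quantum metric space — which, rather than being checked directly, is read off immediately from the quantum isometry with $\left(C(X),\Lip_{\mathsf{d}_X}\right)$ furnished by $\mathfrak{c}_X$.
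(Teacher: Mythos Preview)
Your proposal is correct and follows essentially the same approach as the paper: invoke Theorem (\ref{fd-thm}) with $M=N=1$ for the Leibniz compact quantum metric space structure, then verify $\Lip_{\mathsf{d}_X}^{(\A),C(X)}\circ\mathfrak{c}_X=\Lip_{\mathsf{d}_X}$ via the observation that the quotient term vanishes on $C(X,\C1_\A)$ and $\Vert f(x)1_\A-f(y)1_\A\Vert_\A=|f(x)-f(y)|$. The paper's proof is terser, leaving the propinquity conclusion and the $\A=\C$ case implicit, whereas you spell these out explicitly via Theorem-Definition (\ref{def-thm})(5); this is a harmless (indeed helpful) elaboration rather than a different route.
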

\begin{proof}
Fix $f \in C(X)$, we have:
\begin{align*}
\Lip_{\mathsf{d}_X}^{(\A),C(X)} \left(\alg{c}_X(f)\right)&= \max \left\{ l^{(\A)}_{\mathsf{d}_X}\left(\alg{c}_X(f)\right), \left\Vert \alg{c}_X(f)+C(X, \C1_\A) \right\Vert_{C(X, \A)/C(X, \C1_\A)}\right\}  \\
& =\max \left\{ l^{(\A)}_{\mathsf{d}_X}\left(\alg{c}_X(f)\right), 0\right\}\\
&= \sup \left\{ \frac{\left\Vert \alg{c}_X(f)(x) - \alg{c}_X(f)(y) \right\Vert_\A}{\mathsf{d}_X(x,y)}: x,y\in X, x \neq y \right\}\\
& = \sup \left\{ \frac{\left\Vert f(x)1_\A - f(y) 1_\A \right\Vert_\A}{\mathsf{d}_X(x,y)}: x,y\in X, x \neq y \right\}\\
& = \sup \left\{ \frac{\left\vert f(x) - f(y)\right\vert \cdot \left\Vert 1_\A \right\Vert_\A}{\mathsf{d}_X(x,y)}: x,y\in X, x \neq y \right\}=\Lip_{\mathsf{d}_X}(f).
\end{align*}
This along with Theorem (\ref{fd-thm}) completes the proof.
\end{proof}

The last result of this section shows that when $\A$ is finite dimensional, then any two seminorms  of Definition (\ref{homog-lipschitz-def}) are equivalent regardless of choice of $\mathsf{n}$ or $q$ (this is quite surprising since the quotient norms associated to $q=C(X)$ and $q=\C$ have different kernels on $C(X, \A)$ as long as $\A\not\cong \C$ and thus cannot be equivalent), and the map $\alg{c}_X$ of the above corollary is bi-Lipschitz.   Note that the following  Proposition (\ref{C(X)-prop}) rests mainly on a result of \Latremoliere  \ in \cite{Latremoliere16b}.  
First, a definition:
\begin{definition}\label{bi-lipschitz-def}
Let $(X, \mathsf{d}_X)$ and $(Y, \mathsf{d}_Y)$ be two pseudo-metric spaces, where pseudo means that distance $0$ need no provide equal elements.  

Fix $N >0$. A map $\pi : (X, \mathsf{d}_X) \longrightarrow (Y, \mathsf{d}_Y)$ is {\em $N$-Lipschitz} if $\mathsf{d}_Y(\pi(a),\pi(b)) \leq N\cdot  \mathsf{d}_X(a,b)$ for all $a,b \in X$.

The map $\pi$ is {\em bi-Lipschitz}, if furthermore, there exists $M>0$ such that $M\cdot \mathsf{d}_X (a,b) \leq \mathsf{d}_Y(\pi(a),\pi(b)) \leq N\cdot  \mathsf{d}_X(a,b)$ for all $a,b \in X$.
\end{definition} 
\begin{proposition}\label{C(X)-prop}
Let $(X, \mathsf{d}_X)$ be a compact metric space and $\A$ be a finite-dimensional C*-algebra. Let $\mu \in \StateSpace(C(X))$. 

Using notation from Defintion (\ref{homog-lipschitz-def}), if $\Vert \cdot \Vert_{\mathsf{n}}$ and $\Vert \cdot \Vert_{\mathsf{m}}$ are norms over $\R$ or $\C$ on $\A$ and $q$ and $p$ are either $C(X), \C$ or $\mu$, then the seminorms $\Lip_{\mathsf{d}_X}^{(\mathsf{n}),q}$ and $\Lip_{\mathsf{d}_X}^{(\mathsf{m}),p}$ are equivalent.  

Futhermore, the map $\alg{c}_X$ of Corollary (\ref{C(X)-cor}) is bi-Lipschitz with respect to any $\Lip_{\mathsf{d}_X}^{(\mathsf{n}),q}$.
\end{proposition}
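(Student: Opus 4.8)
The plan is to reduce the whole proposition to the equivalence theorem for Lip-norms sharing a common domain, which is the result of \Latremoliere\ in \cite{Latremoliere16b} referred to in the statement; the bi-Lipschitz assertion will then fall out of Corollary (\ref{C(X)-cor}).

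First I would pin down the domains. Since $\A$ is finite-dimensional, every norm on $\A$ is equivalent to the C*-norm, so $l^{(\mathsf{n})}_{\mathsf{d}_X}(a)<\infty$ if and only if $l^{(\mathsf{m})}_{\mathsf{d}_X}(a)<\infty$ for any $a\in C(X,\A)$; moreover the ``$q$-term'' appearing in every case of Definition (\ref{homog-lipschitz-def}) is a norm, or quotient norm, of an actual element and is therefore automatically finite on all of $C(X,\A)$. Hence $\dom{\Lip_{\mathsf{d}_X}^{(\mathsf{n}),q}}_{C(X,\A)}=\{a\in C(X,\A):l^{(\A)}_{\mathsf{d}_X}(a)<\infty\}$ is one and the same set $\D$ for every choice of $\mathsf{n}$ and $q$. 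By Theorem (\ref{fd-thm}) each pair $\left(C(X,\A),\Lip_{\mathsf{d}_X}^{(\mathsf{n}),q}\right)$ is a quasi-Leibniz compact quantum metric space, so each $\Lip_{\mathsf{d}_X}^{(\mathsf{n}),q}$ is in particular a lower semi-continuous Lip-norm (Lemma (\ref{dom-dense-lemma})) with domain $\D$.

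Next I would invoke the main theorem of \cite{Latremoliere16b}: two lower semi-continuous Lip-norms on the same unital C*-algebra that have the same domain are equivalent. (The underlying mechanism is the closed graph theorem: lower semi-continuity makes $\D$ complete for each graph norm $\|\cdot\|_{C(X,\A)}+\Lip_{\mathsf{d}_X}^{(\mathsf{n}),q}(\cdot)$, the identity is then a continuous bijection between these Banach spaces, and the $\|\cdot\|_{C(X,\A)}$-term is removed from the resulting comparison by the usual normalization via a state, using that the kernel is $\C1_{C(X,\A)}$ and that the relevant balls are norm-bounded by Theorem (\ref{Rieffel-thm}).) Applying this with $\Lip_1=\Lip_{\mathsf{d}_X}^{(\mathsf{n}),q}$ and $\Lip_2=\Lip_{\mathsf{d}_X}^{(\mathsf{m}),p}$ gives the asserted equivalence on $\D$; since both seminorms equal $+\infty$ off $\D$, the two-sided bound holds on all of $C(X,\A)$.

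Finally, for the bi-Lipschitz statement, Corollary (\ref{C(X)-cor}) already gives $\Lip_{\mathsf{d}_X}^{(\A),C(X)}\circ\mathfrak{c}_X=\Lip_{\mathsf{d}_X}$. Combining this with the equivalence of $\Lip_{\mathsf{d}_X}^{(\mathsf{n}),q}$ and $\Lip_{\mathsf{d}_X}^{(\A),C(X)}$ just obtained, there are $M,N>0$ with $M\,\Lip_{\mathsf{d}_X}(f)=M\,\Lip_{\mathsf{d}_X}^{(\A),C(X)}(\mathfrak{c}_X(f))\leq \Lip_{\mathsf{d}_X}^{(\mathsf{n}),q}(\mathfrak{c}_X(f))\leq N\,\Lip_{\mathsf{d}_X}^{(\A),C(X)}(\mathfrak{c}_X(f))=N\,\Lip_{\mathsf{d}_X}(f)$ for every $f\in C(X)$ (the inequalities persisting when $\Lip_{\mathsf{d}_X}(f)=\infty$); by linearity this is exactly the statement that $\mathfrak{c}_X$ is bi-Lipschitz for the pseudo-metrics induced by these seminorms in the sense of Definition (\ref{bi-lipschitz-def}). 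The only genuine content lies in the cited theorem from \cite{Latremoliere16b}; the work on the paper's side is just checking its hypotheses, and the one mildly delicate point is confirming that $\D$ is independent of $q$ as well as of $\mathsf{n}$ — in particular that the $q=C(X)$ and $q=\C$ versions, whose kernels on $C(X,\A)$ genuinely differ when $\A\not\cong\C$, nonetheless have the same full domain.
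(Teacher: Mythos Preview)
Your proof is correct and follows essentially the same route as the paper: establish that all the seminorms share a common domain (using finite-dimensionality of $\A$ for the equivalence of the $l^{(\mathsf{n})}_{\mathsf{d}_X}$ terms and finiteness of every $q$-term), invoke Theorem (\ref{fd-thm}) together with \cite[Corollary 2.5]{Latremoliere16b} to get equivalence, and then combine with Corollary (\ref{C(X)-cor}) for the bi-Lipschitz claim. Your parenthetical sketch of the closed-graph mechanism and the explicit chain of inequalities for $\mathfrak{c}_X$ are helpful elaborations, but they do not depart from the paper's argument.
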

\begin{proof}
Since $\A$ is finite-dimensional, the norms $\Vert \cdot \Vert_{\mathsf{n}}$ and $\Vert \cdot \Vert_{\mathsf{m}}$ are equivalent.  Hence, the domains $\dom{\Lip_{\mathsf{d}_X}^{(\mathsf{n}),q}}=\dom{\Lip_{\mathsf{d}_X}^{(\mathsf{m}),p}}$ since the terms determined by $q$ and $p$ are finite on all of $C(X, \A)$.  Therefore, by Theorem (\ref{fd-thm}) and   \cite[Corollary 2.5]{Latremoliere16b}, the seminorms $\Lip_{\mathsf{d}_X}^{(\mathsf{n}),q}$ and $\Lip_{\mathsf{d}_X}^{(\mathsf{m}),p}$ are equivalent.  The last statement of this proposition follows from this and Corollary (\ref{C(X)-cor}).
\end{proof}

\begin{remark}
Corollary (\ref{C(X)-cor}) and Proposition (\ref{C(X)-prop}) may suggest that the Lip-norm of Corollary (\ref{C(X)-cor}) is the best choice from our family of seminorms for a Lip-norm on $C(X, \A)$ for $X$ compact metric and $\A$ finite-dimensional.  However, we argue that it can depend on the situation.  First we notice the advantage of a different norm on $\A$ than that of the C*-norm $\Vert \cdot \Vert_\A$  throughout this paper, which is the max norm defined in Lemma (\ref{norm-equiv-lemma}) (see Remark (\ref{max-norm-remark}), the proof of Proposition (\ref{any-state-lip-prop}), and Corollary (\ref{main-cor}), for instances when the max norm is used or mentioned).  Thus, allowing for flexibility on the norm of $\A$ seems advantageous.  

In regards to why we consider other $q$'s aside from $q=C(X)$, we first consider $q=\C$.  Let $Y$ be a compact metric space and $\B$ be a finite-dimensional C*-algebra.  Note that any unital *-monomorphism $\pi : C(X, \A) \longmapsto C(Y, \B)$, will be an isometry with respect to the quotient norms associated to $q=\C$ but not necessarily preserve the quotient norms associated to $q=C(X)$. So, if  $q=C(X)$, then it would be much more difficult to verify if $\pi$ were contractive with respect to the Lip-norms of $C(X, \A)$  and $C(Y, \A)$ than in the case of $q=\C$.  And, contractivity is vital to results pertaining to the Gromov-Hausdorff propinquity (see \cite[Proposition 8.5]{Rieffel00} and \cite[Theorem 3.5]{Aguilar-Latremoliere15}, for example).  The reason to consider $q=\mu$ is similar to $q=\C$, except that a unital *-monomorphism need not preserves all states, but it does occur often enough --- especially in the setting of inductive sequences and inductive limits ---, and the quantity given by $q=\mu$ in the Lip-norm can be much easier to calculate than a quotient norm.
\end{remark}

\section{Isometries from metrics into quantum metrics}\label{mk-metric-section}
In order for our seminorms on $C(X,\A)$ of the previous section to be a suitable generalization of the Lipschitz seminorm on $C(X)$, we claim that there should be a natural embedding of $X$ into the state space of $C(X,\A)$ which captures some of the metric structure of $X$ using the Monge-Kantorovich metric and not just the topological structure. This is because this happens in the classical case when $\A=\C$ as dicussed in the introduction.  When $\A$ is finite-dimensional, the purpose of this section to verify that we can accomplish this claim  for our entire family seminorms of Definition (\ref{homog-lipschitz-def}) by providing a bi-Lipschitz embedding that can be strengthened to an isometry in many intuitive cases that are still valid for all finite-dimensional $\A$ and compact metric $X$. 

For the remainder of this paper, we restrict our attention to finite-dimensional C*-algebras that are {\em equal}  to finite direct sums of complex-valued matrix algebras.  This is so that we can provide explicit estimates and the results of this section are still true for any finite-dimensional C*-algebra $\A$ in $C(X,\A)$, which is explained in the following remark.
\begin{remark}\label{fd-remark}
  Since every finite-dimenisonal C*-algebra $\B$ is *-isomorphic to a  finite direct sums of complex-valued matrix algebras \cite[Theorem III.1.1]{Davidson}, we assume that every finite-dimensional C*-algebra is equal to $\oplus_{k=0}^n M_{m_k}(\C)$ for some $n \in \N, m_k \in \N \setminus \{0\}$ for $ k \in \{0, \ldots,n \} $ for the rest of this paper. This will cause no loss of generality for the results of this section.  Indeed, let $(X,\mathsf{d}_X)$ be a compact metric space,  and assume $\pi: \B \longrightarrow \oplus_{k=0}^n M_{m_k}(\C)$ is a *-isomorphism, then the map:
\begin{equation*}
\Pi: b \in  C(X, \B) \longmapsto \left(x \in X \mapsto \pi(b(x))\right) \in C\left(X, \oplus_{k=0}^n M_{m_k}(\C)\right)
\end{equation*}
is a unital *-isomorphism, and $\Lip^{(\mathsf{n}),q}_{\mathsf{d}_X} \circ \Pi$ would define a Lip-norm for  $ C(X, \B) $ with the same properties of $\Lip$, and furthermore, the dual map:
\begin{equation*}\Pi^*: \mu \in \StateSpace\left(C\left(X, \oplus_{k=0}^n M_{m_k}(\C)\right)\right) \mapsto \mu\circ \Pi \in \StateSpace(C(X,\B))
\end{equation*} is an isometry between the associated Monge-Kantorovich metrics of $\Lip^{(\mathsf{n}),q}_{\mathsf{d}_X}$ and $\Lip^{(\mathsf{n}),q}_{\mathsf{d}_X} \circ \Pi$ by \cite[Thoerem 6.2]{Rieffel00}.
\end{remark}

\begin{definition}\label{matrix-units-def}
Let $\A=\oplus_{k=0}^n M_{m_k}(\C)$ be a finite-dimensional C*-algebra for some $n \in \N$ and $m_k \in \N \setminus \{0\}$ for $k \in \{0, \ldots, n\}$.  
Fix $k \in \{0, \ldots, n\}$ and $p,q \in \{1, \ldots, m_k\}$.  Let $e_{k,(p,q)} \in \A$ denote the {\em matrix unit} such that $ e_{k,(p,q)}=\left(a^1,\ldots, a^n \right)\in \A$ and $a^j=0$ for $j \in \{0, \ldots, n\} \setminus \{k\}$ and $a^k$ is the matrix in $M_{m_k}(\C)$ that is $1$ in the $p$-row, $q$-column entry and $0$ elsewhere.  We will use $e_{k,(p,q)}$ to denote this element in $\A$ as well as its projection onto $M_{m_k}(\C)$, which is $a^k$.
\end{definition}

\begin{notation}\label{fd-states-notation}
Let $(X, \mathsf{d}_X)$ be a compact metric space.  Assume that $\A$ is finite-dimensional and that there exist $n \in \N$ and $m_k \in \N \setminus \{0\}$ for $k \in \{0, \ldots, n\}$ such that $\A=\oplus_{k=0}^n M_{m_k}(\C)$. Let $ \A \otimes C(X)$ denote the C*-algebra formed over the algebraic tensor product of $\A$ and $C(X)$. By \cite[Proposition 2.4.2 and Proposition 3.6.12]{Brown-Ozawa}, we do not need to decorate the tensor (see the proof of Lemma (\ref{pure-state-lemma}) for details).  The map:
\begin{equation*}
\pi_\otimes^{\A,X}  : a\in  C(X, \A) \longmapsto \sum_{k=0}^n \sum_{p,q=1}^{m_k} e_{k,(p,q)} \otimes a^k_{p,q} \in \A \otimes C(X)
\end{equation*}
is the canoncial *-isomorphism, where $a^k_{p,q} \in C(X)$ and for each $x\in X$, the element $a^k_{p,q}(x) \in \C$ is the projection of $a(x)$ onto the $p$-row, $q$-column coordinate of $M_{m_k}(\C)$  (continuity of $a^k_{p,q}$ follows by finite-dimensionality and definiton of product topology).  That $\pi_\otimes^{\A,X}$ is a *-isomorphism follows from the observation that it is the inverse of the map $T$ used in the proof of Lemma (\ref{pure-state-lemma}).

For $\mu \in \StateSpace(\A), \nu \in \StateSpace(C(X))$, let $\mu \otimes \nu : \A \otimes C(X) \longrightarrow \C$ denote the state on $\A \otimes C(X)$ such that on elementary tensors $(\mu \otimes \nu) (a \otimes f)=\mu(a)\nu(f)$ for all $a \in \A, f \in C(X)$ \cite[Proposition 3.4.6]{Brown-Ozawa}.  For $x \in X$, let:
\begin{equation*}
\delta_x : f \in C(X) \longmapsto f(x) \in \C
\end{equation*}
denote the Dirac point mass at $x$.  The map $\delta_x \in \StateSpace(C(X))$, and for a state $\mu \in \StateSpace(\A)$, denote: 
\begin{equation}\label{fd-states-eq}
\begin{split}
&\text{(1) } \ \ \ \ k_\mu^\A=\sum_{k=0}^n \sum_{p,q=1}^{m_k} \left\vert \mu\left(e_{k,(p,q)}\right) \right\vert,\\
&\text{(2) } \ \ \ \ \mu_x = \left(\mu \otimes \delta_x \right)\circ \pi_\otimes^{\A,X} \in \StateSpace(C(X, \A)), \text{ and }\\
&\text{(3) }\ \ \ \ \Delta_\mu^\A : x \in  (X, \mathsf{d}_X) \longmapsto \mu_x \in \StateSpace(C(X, \A)).
\end{split}
\end{equation}
\end{notation}

Now, it is easy to show that the map $\Delta_\mu^\A$ is a homeomorphism onto its image with respect to the weak* topology for any state $\mu \in \StateSpace(\A)$ (we, in fact, verify this in Proposition (\ref{any-state-lip-prop})).  However, when $X$ is metric we will show that $\Delta_\mu^\A$ captures some metric structure when $\mu \in \StateSpace(\A)$ is any state by way of the Monge-Kantorovich metric (quantum metric) induced by the seminorms of Definition (\ref{homog-lipschitz-def}), and for particular choices of states, we will be capable of isometrically embedding $X$ into $\StateSpace (C(X, \A))$, which thus generalizes the classical case of $\A=\C$ to the case  when $\A$ is finite-dimensional, where the case of $\A=\C$ was discuss in the introduction.  This next Proposition (\ref{any-state-lip-prop}) also shows that for any state $\mu$ and any of the seminorms of  Definition (\ref{homog-lipschitz-def}), we can embed $X$ homeomorphically into $\StateSpace(C(X,\A))$ with a $k$-Lipschitz map, where the constant $k$ depends on $\mu$, the matrix units of Defintion (\ref{matrix-units-def}), and the choice of norm on the finite-dimensional C*-algebra $\A$. First, we state the following lemma about a certain standard norm on the underlying vector space of a finite-dimensional C*-algebra, which will prove useful later as well.
\begin{lemma}\label{norm-equiv-lemma}
Fix $n \in \N$ and $m_k \in \N \setminus \{0\}$ for $k \in \{0, \ldots, n\}$.  Let $\A=\oplus_{k=0}^n M_{m_k}(\C)$ be a finite-dimensional C*-algebra.  Let $a=(a^k)_{k=0}^n \in \A$, where $a^k \in M_{m_k}(\C)$ for all $k \in \{0, \ldots, n\}$. Fix $k \in \{0, \ldots, n\}$ and let $a^k_{p,q}$ denote the $p$-row, $q$-column entry of the matrix $a^k \in M_{m_k}(\C)$.  Let $m_\A = \max_{k \in \{0, \ldots, n\}} m_k$.

 If  $\Vert \cdot \Vert_{\infty, \A}$ denotes the norm on $\A$ defined by $\Vert a \Vert_{\infty, \A}=\max_{k \in \{0, \ldots, n\}} \left\Vert a^k \right\Vert_{\infty}$, where $\left\Vert a^k \right\Vert_{\infty} = \max_{p,q\in \{1, \ldots, m_k\}} \left\vert \left(a^k\right)_{p,q} \right\vert$ for each $k \in \{0, \ldots, n\}$, then  this norm is equivalent to $\Vert \cdot \Vert_\A$  by:
\begin{equation*}
\frac{1}{m_\A}\Vert \cdot \Vert_\A \leq \Vert \cdot \Vert_{\infty, \A} \leq  \Vert \cdot \Vert_\A.
\end{equation*}
\end{lemma}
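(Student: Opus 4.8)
The plan is to reduce the two-sided estimate to a single matrix block and then pass to the direct sum by taking maxima over $k$. Recall that the C*-norm on $\A=\oplus_{k=0}^n M_{m_k}(\C)$ is $\Vert a\Vert_\A=\max_{k\in\{0,\ldots,n\}}\Vert a^k\Vert_{\mathrm{op}}$, where $\Vert\cdot\Vert_{\mathrm{op}}$ denotes the operator norm on $M_{m_k}(\C)$ acting on $\C^{m_k}$ with its standard inner product. Since also $\Vert a\Vert_{\infty,\A}=\max_k\Vert a^k\Vert_\infty$, it suffices to show, for each fixed $k\in\{0,\ldots,n\}$, that $\Vert a^k\Vert_\infty\leq\Vert a^k\Vert_{\mathrm{op}}\leq m_k\Vert a^k\Vert_\infty$.

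For the first inequality, write $a^k_{p,q}=\langle a^k\varepsilon_q,\varepsilon_p\rangle$ for the standard orthonormal basis $(\varepsilon_1,\ldots,\varepsilon_{m_k})$ of $\C^{m_k}$; by Cauchy--Schwarz, $\vert a^k_{p,q}\vert\leq\Vert a^k\varepsilon_q\Vert\cdot\Vert\varepsilon_p\Vert\leq\Vert a^k\Vert_{\mathrm{op}}$, and taking the maximum over $p,q\in\{1,\ldots,m_k\}$ gives $\Vert a^k\Vert_\infty\leq\Vert a^k\Vert_{\mathrm{op}}$. For the second inequality, I would bound the operator norm by the Hilbert--Schmidt norm: $\Vert a^k\Vert_{\mathrm{op}}\leq\bigl(\sum_{p,q=1}^{m_k}\vert a^k_{p,q}\vert^2\bigr)^{1/2}\leq\bigl(m_k^2\Vert a^k\Vert_\infty^2\bigr)^{1/2}=m_k\Vert a^k\Vert_\infty$. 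Taking maxima over $k$ then yields $\Vert a\Vert_{\infty,\A}=\max_k\Vert a^k\Vert_\infty\leq\max_k\Vert a^k\Vert_{\mathrm{op}}=\Vert a\Vert_\A$ on the one hand, and $\Vert a\Vert_\A=\max_k\Vert a^k\Vert_{\mathrm{op}}\leq\max_k m_k\Vert a^k\Vert_\infty\leq m_\A\max_k\Vert a^k\Vert_\infty=m_\A\Vert a\Vert_{\infty,\A}$ on the other, and the latter rearranges to $\tfrac{1}{m_\A}\Vert a\Vert_\A\leq\Vert a\Vert_{\infty,\A}$, completing the proof.

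There is no substantive obstacle here; this is a routine finite-dimensional comparison of norms. The only points requiring a little care are keeping the per-block operator norm $\Vert\cdot\Vert_{\mathrm{op}}$ distinct from the global C*-norm $\Vert\cdot\Vert_\A$, and observing that the constant appearing on the left is $m_\A=\max_k m_k$ rather than $m_k$ precisely because, upon passing to the direct sum, each block must be estimated by the worst (largest) block size.
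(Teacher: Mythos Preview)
Your proof is correct and entirely self-contained. The paper, by contrast, does not give an argument at all: its proof consists solely of a citation to the table of matrix-norm equivalences in Horn's \emph{Matrix Analysis}. So your route is genuinely different in that you actually supply the estimates, reducing to a single block and using the Cauchy--Schwarz inequality for $\Vert a^k\Vert_\infty\leq\Vert a^k\Vert_{\mathrm{op}}$ and the Hilbert--Schmidt bound for $\Vert a^k\Vert_{\mathrm{op}}\leq m_k\Vert a^k\Vert_\infty$, before taking maxima over blocks. The advantage of your approach is that it is elementary and transparent, requiring no external reference; the paper's approach simply delegates to a standard source, which is terser but leaves the reader to look up the constants.
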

\begin{proof}
The conclusion follows  from the table of norm equivalences given in  \cite[page 314]{Horn}.
\end{proof}
\begin{remark}\label{max-norm-remark}
If, in Corollary (\ref{C(X)-cor}), we used the norm $\Vert \cdot \Vert_{\infty, \A}$ of Lemma (\ref{norm-equiv-lemma}) instead of $\Vert \cdot \Vert_\A$, then the map $\alg{c}_X$ would still be an isometry of the Lip-norms $\Lip_{\mathsf{d}_X}$ and  $\Lip_{\mathsf{d}_X}^{(\infty,\A),C(X)} $ by the same argument of the proof of Corollary (\ref{C(X)-cor})  since the norms $\Vert \cdot \Vert_{\infty, \A}$ and $\Vert \cdot \Vert_{\A}$  agree on scalars, but the associated  compact quantum metric space would not be Leibniz but $\left(m_\A,0\right)$-quasi-Leibniz by Theorem (\ref{fd-thm}) and Lemma (\ref{norm-equiv-lemma}).
\end{remark}
\begin{proposition}\label{any-state-lip-prop}
Let $(X, \mathsf{d}_X)$ be a compact metric space and let $\A=\oplus_{k=0}^n M_{m_k}(\C)$ be a finite dimensional C*-algebra, where such that $n \in \N$ and $m_k \in \N \setminus \{0\}$ for $k \in \{0, \ldots, n\}$.

Using Expression (\ref{fd-states-eq}), if $\mu \in \StateSpace(\A)$, then the map $
\Delta_\mu^\A$
is a homeomorphism onto its image with respect to the weak* topology on $\StateSpace(C(X, \A))$.

Furthermore, using notation from Definition (\ref{homog-lipschitz-def}) and Definition (\ref{Monge-Kantorovich-def}), if $\Vert \cdot \Vert_\mathsf{n}$ is a norm on $\A$ over $\R$ or $\C$ and $M,N>0$ such that $M\cdot \Vert \cdot \Vert_{\mathsf{n}} \leq \Vert \cdot \Vert_\A \leq N \cdot \Vert \cdot \Vert_{\mathsf{n}}$, then  for all $x,y \in X$, we have that:
\begin{equation*}
\Kantorovich{\Lip_{\mathsf{d}_X}^{(\mathsf{n}),q}}\left(\Delta_\mu^\A(x), \Delta_\mu^\A(y)\right) \leq N\cdot k_\mu^\A  \cdot \mathsf{d}_X (x,y),
\end{equation*}
and thus $\Delta_\mu^\A$ is $N\cdot  k_\mu^\A$-Lipschitz.
\end{proposition}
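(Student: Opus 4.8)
The plan is to prove the two assertions in turn, treating the homeomorphism statement first and then the metric estimate, which refines it. For the homeomorphism claim I would use that $X$ is compact while the weak* topology on $\StateSpace(C(X,\A))$ is Hausdorff, so it suffices to check that $\Delta_\mu^\A$ is continuous and injective. Continuity is a statement about pointwise convergence of functionals: expanding $\pi_\otimes^{\A,X}(a)=\sum_{k,p,q}e_{k,(p,q)}\otimes a^k_{p,q}$ yields $\mu_x(a)=\sum_{k,p,q}\mu(e_{k,(p,q)})\,a^k_{p,q}(x)$, a finite $\C$-linear combination of the continuous functions $a^k_{p,q}\in C(X)$, so $x\mapsto\mu_x(a)$ is continuous for every $a\in C(X,\A)$. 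Injectivity I would get by testing against scalar functions: for $f\in C(X)$ one has $\mu_x(f\cdot 1_{C(X,\A)})=\mu(1_\A)f(x)=f(x)$, so $\mu_x=\mu_y$ forces $f(x)=f(y)$ for all $f\in C(X)$ and hence $x=y$, since $C(X)$ separates the points of $X$. A continuous bijection from a compact space onto a Hausdorff space is a homeomorphism onto its image.

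For the Lipschitz estimate I would fix $x,y\in X$ and an arbitrary $a\in\dom{\Lip_{\mathsf{d}_X}^{(\mathsf{n}),q}}$ with $\Lip_{\mathsf{d}_X}^{(\mathsf{n}),q}(a)\leq 1$, and bound $|\mu_x(a)-\mu_y(a)|$ before passing to the supremum defining $\Kantorovich{\Lip_{\mathsf{d}_X}^{(\mathsf{n}),q}}$. From $l_{\mathsf{d}_X}^{(\mathsf{n})}(a)\leq 1$ I get $\|a(x)-a(y)\|_\mathsf{n}\leq\mathsf{d}_X(x,y)$, hence $\|a(x)-a(y)\|_\A\leq N\,\mathsf{d}_X(x,y)$. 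Since each matrix entry of a block is dominated by the operator norm of that block, and hence by the C*-norm of the whole element (equivalently, $|b^k_{p,q}|\leq\|b\|_{\infty,\A}\leq\|b\|_\A$ via Lemma (\ref{norm-equiv-lemma})), it follows that $|a^k_{p,q}(x)-a^k_{p,q}(y)|=|(a(x)-a(y))^k_{p,q}|\leq N\,\mathsf{d}_X(x,y)$ for every $k,p,q$. Substituting into $\mu_x(a)-\mu_y(a)=\sum_{k,p,q}\mu(e_{k,(p,q)})\,(a^k_{p,q}(x)-a^k_{p,q}(y))$ and applying the triangle inequality produces exactly the factor $\sum_{k,p,q}|\mu(e_{k,(p,q)})|=k_\mu^\A$, giving $|\mu_x(a)-\mu_y(a)|\leq N\,k_\mu^\A\,\mathsf{d}_X(x,y)$; taking the supremum over all admissible $a$ yields $\Kantorovich{\Lip_{\mathsf{d}_X}^{(\mathsf{n}),q}}(\Delta_\mu^\A(x),\Delta_\mu^\A(y))\leq N\,k_\mu^\A\,\mathsf{d}_X(x,y)$, and $N\,k_\mu^\A$-Lipschitzness of $\Delta_\mu^\A$ is then immediate.

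I do not expect a genuine obstacle here: once the concrete formula $\mu_x(a)=\sum_{k,p,q}\mu(e_{k,(p,q)})\,a^k_{p,q}(x)$ is in hand, the argument is almost entirely bookkeeping. The two points that need a little care are the chain of norm comparisons passing from $\|\cdot\|_\mathsf{n}$ through $\|\cdot\|_\A$ to the entrywise modulus --- which is precisely where the constant $N$ enters --- and the observation that the coefficient produced by the triangle inequality is exactly the quantity $k_\mu^\A$ of Notation (\ref{fd-states-notation}). It is also worth noting that the estimate is completely uniform in the choice of $q$, since only the $l_{\mathsf{d}_X}^{(\mathsf{n})}$-part of $\Lip_{\mathsf{d}_X}^{(\mathsf{n}),q}$ is used; the $q$-dependent term plays no role in this bound.
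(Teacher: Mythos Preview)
Your proposal is correct and follows essentially the same route as the paper: both arguments expand $\mu_x(a)$ via $\pi_\otimes^{\A,X}$ into the finite sum $\sum_{k,p,q}\mu(e_{k,(p,q)})\,a^k_{p,q}(x)$, use this for weak*-continuity, and then bound each $|a^k_{p,q}(x)-a^k_{p,q}(y)|$ by $N\,\mathsf{d}_X(x,y)$ through the chain $\|\cdot\|_{\infty,\A}\leq\|\cdot\|_\A\leq N\|\cdot\|_\mathsf{n}$ (Lemma~(\ref{norm-equiv-lemma})) to produce $N\,k_\mu^\A$ after the triangle inequality. One small point in your favor: you explicitly verify injectivity of $\Delta_\mu^\A$ by testing against scalar functions, whereas the paper invokes the compact-to-Hausdorff principle without separately recording this step.
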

\begin{proof}
To show that $\Delta_\mu^\A$ is a homeomorphism onto its image, we do not need to first show the Lipscitz condition and we do not need that $X$ is  metric; however, we still rely on $X$ being Hausdorff.  Let $(x_\lambda)_{\lambda \in \Lambda} \subseteq X$ be a net that converges to $x \in X$.  Thus, for any $f \in C(X)$, we have that $\left(f\left(x_\lambda\right)_{\lambda \in \Lambda} \right) \subset \C$ converges to $f(x)\in \C$.  Let $ a\in C(X, \A)$.  Fix $\lambda \in \Lambda$:
\begin{equation}\label{state-eval-eq}
\begin{split}
\Delta_\mu^\A\left(x_\lambda\right) (a) &=  \mu_{x_\lambda}(a)=\left(\mu \otimes \delta_{x_\lambda}\right) \left(\sum_{k=0}^n \sum_{p,q=1}^{m_k} e_{k,(p,q)} \otimes a^k_{p,q}   \right)\\
&=\sum_{k=0}^n \sum_{p,q=1}^{m_k}\mu\left( e_{k,(p,q)}\right)\delta_{x_\lambda}\left(a^k_{p,q} \right)=\sum_{k=0}^n \sum_{p,q=1}^{m_k}\mu\left( e_{k,(p,q)}\right)a^k_{p,q} \left( x_{\lambda}\right).
\end{split}
\end{equation}
Since this sum is finite and $\lambda \in \Lambda$ was arbitrary, we have that  $\left(\Delta_\mu^\A\left(x_\lambda\right) (a)\right)_{\lambda\in \Lambda} \subset \C$ converges to  $\Delta_\mu^\A\left(x\right) (a)\in \C$ .  Thus, the net $\left(\Delta_\mu^\A\left(x_\lambda\right)\right)_{\lambda\in \Lambda} \subset \StateSpace(C(X, \A))$ converges to $\Delta_\mu^\A\left(x\right) \in \StateSpace(C(X, \A))$ with respect to the weak* topology since $a \in C(X, \A)$ was arbitrary.  Hence, since $X$ is compact and the weak* topology is Hausdorff, we have that $\Delta_\mu^\A$ is a homeomorphism onto its image.

Next, fix $x,y \in X$ and let $a \in \dom{\Lip_{\mathsf{d}_X}^{(\mathsf{n}),q}}$ such that $\Lip_{\mathsf{d}_X}^{(\mathsf{n}),q}(a) \leq 1$.  Using Lemma (\ref{norm-equiv-lemma}), we have:
\begin{equation*}
\frac{1}{N} \Vert a(x)- a(y) \Vert_{\infty, \A} \leq \frac{1}{N} \Vert a(x)- a(y) \Vert_{ \A}  \leq \Vert a(x)-a(y) \Vert_\mathsf{n} \leq \mathsf{d}_X(x,y),
\end{equation*}
which provides that $\Vert a(x)- a(y) \Vert_{\infty, \A} \leq N \mathsf{d}_X (x,y)$.  Hence, following Expression (\ref{state-eval-eq}), we gather that:
\begin{align*}
&\left\vert \Delta^\A_\mu(x)(a) - \Delta^\A_\mu(y)(a) \right\vert  =\left\vert \mu_x(a) - \mu_y(a) \right\vert \\
&  =\left\vert \sum_{k=0}^n \sum_{p,q=1}^{m_k}\mu\left( e_{k,(p,q)}\right)a^k_{p,q} \left( x\right) - \sum_{k=0}^n \sum_{p,q=1}^{m_k}\mu\left( e_{k,(p,q)}\right)a^k_{p,q} \left( y\right)  \right\vert\\
& \leq \sum_{k=0}^n \sum_{p,q=1}^{m_k} \left\vert   \mu\left( e_{k,(p,q)}\right)\right\vert \left\vert a^k_{p,q} \left( x\right) - a^k_{p,q} \left( y\right)  \right\vert \\
& \leq N \left(\sum_{k=0}^n \sum_{p,q=1}^{m_k} \left\vert   \mu\left( e_{k,(p,q)}\right)\right\vert\right) \mathsf{d}_X (x,y) = Nk^\A_\mu \mathsf{d}_X (x,y),
\end{align*}
which implies that $\Kantorovich{\Lip_{\mathsf{d}_X}^{(\mathsf{n}),q}}\left(\Delta_\mu^\A(x), \Delta_\mu^\A(y)\right) \leq N k_\mu^\A \mathsf{d}_X (x,y)$  for all $x,y \in X$ by Definition (\ref{Monge-Kantorovich-def}), and the proof is complete.
\end{proof}

In order to produce bi-Lipschitz maps and isometries, we want to know more information about the expression $k_\mu^\A$, which requires us to focus our attention on particular states.  We will consider the family of tracial states of $\A$.   We note that the following theorem does include all tracial states on $\A$ by \cite[Example IV.5.4]{Davidson}.

\begin{theorem}\label{trace-bi-lip-thm}
Let $(X, \mathsf{d}_X)$ be a compact metric space and let  $\A=\oplus_{k=0}^n M_{m_k}(\C)$ be a finite dimensional C*-algebra, where such that $n \in \N$ and $m_k \in \N \setminus \{0\}$ for $k \in \{0, \ldots, n\}$. Let $\Vert \cdot \Vert_\mathsf{n}$ be a norm on $\A$ over $\R$ or $\C$ and let $M, N>0$ such that $M\cdot \Vert \cdot \Vert_\mathsf{n}\leq \Vert \cdot \Vert_\A \leq N\cdot \Vert \cdot \Vert_\mathsf{n}$ and let $\mu \in \StateSpace(C(X, \A))$.  Let $v=(v_0, \ldots, v_n) \in \R^{n+1}$ such that $v_k \in [0,1]$ for all $k \in \{0, \ldots, n\}$ and $1=\sum_{k=0}^n v_k$.

If we let $\mathsf{tr}_v^\A= \oplus_{k=0}^n v_k\mathsf{tr}_{m_k} : \A \longrightarrow \C$, where $\mathsf{tr}_{m_k}=\frac{1}{m_k}\mathsf{Tr}$ is the unique tracial state on $M_{m_k}(\C)$ for all $k \in \{0, \ldots, n\}$, then using notation from Definition (\ref{homog-lipschitz-def}), Definition (\ref{Monge-Kantorovich-def}), and Expression (\ref{fd-states-eq}), we have that: 
\begin{equation*}\Delta_{\mathsf{tr}_v^\A}^\A: \left(X, \mathsf{d}_X\right) \longrightarrow \left(\StateSpace(C(X, \A))\Kantorovich{\Lip_{\mathsf{d}_X}^{(\mathsf{n}),q}}\right), 
\end{equation*} is bi-Lipschitz, and in particular:
\begin{enumerate}
\item if $q=C(X)$, then for all $x,y \in X$:
\begin{equation*}
M\cdot \mathsf{d}_X(x,y) \leq \Kantorovich{\Lip_{\mathsf{d}_X}^{(\mathsf{n}),C(X)}}\left(\Delta_{\mathsf{tr}_v^\A}^\A(x), \Delta_{\mathsf{tr}_v^\A}^\A(y)\right) \leq N \cdot \mathsf{d}_X (x,y);
\end{equation*}
\item if $q=\C$, then:
\begin{enumerate} 
\item if $M\cdot \diam{X}{\mathsf{d}_X}\leq 1$, then for all $x,y \in X$:
 \begin{equation*}
M\cdot \mathsf{d}_X(x,y) \leq \Kantorovich{\Lip_{\mathsf{d}_X}^{(\mathsf{n}),\C}}\left(\Delta_{\mathsf{tr}_v^\A}^\A(x), \Delta_{\mathsf{tr}_v^\A}^\A(y)\right) \leq N \cdot  \mathsf{d}_X (x,y);
\end{equation*}
\item if $M\cdot \diam{X}{\mathsf{d}_X}>1$, then for all $x,y \in X$:
\begin{equation*}
\frac{1}{\diam{X}{\mathsf{d}_X}} \cdot \mathsf{d}_X(x,y) \leq \Kantorovich{\Lip_{\mathsf{d}_X}^{(\mathsf{n}),\C}}\left(\Delta_{\mathsf{tr}_v^\A}^\A(x), \Delta_{\mathsf{tr}_v^\A}^\A(y)\right) \leq N  \cdot \mathsf{d}_X (x,y);
\end{equation*}
\end{enumerate}
\item if $\mu \in \StateSpace(C(X, \A))$ and $q=\mu$, then:
\begin{enumerate} 
\item if $2M\cdot \diam{X}{\mathsf{d}_X}\leq 1$, then for all $x,y \in X$:
\begin{equation*}
M\cdot \mathsf{d}_X(x,y) \leq \Kantorovich{\Lip_{\mathsf{d}_X}^{(\mathsf{n}),\mu}}\left(\Delta_{\mathsf{tr}_v^\A}^\A(x), \Delta_{\mathsf{tr}_v^\A}^\A(y)\right) \leq N\cdot   \mathsf{d}_X (x,y);
\end{equation*}
\item if $2M\cdot \diam{X}{\mathsf{d}_X}>1$, then for all $x,y \in X$:
\begin{equation*}
\frac{1}{2\cdot \diam{X}{\mathsf{d}_X}}\cdot  \mathsf{d}_X(x,y) \leq\Kantorovich{\Lip_{\mathsf{d}_X}^{(\mathsf{n}),\mu}}\left(\Delta_{\mathsf{tr}_v^\A}^\A(x), \Delta_{\mathsf{tr}_v^\A}^\A(y)\right) \leq N \cdot  \mathsf{d}_X (x,y).
\end{equation*}
\end{enumerate}
\end{enumerate}
\end{theorem}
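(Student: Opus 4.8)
The plan is to get the uniform upper bound $\Kantorovich{\Lip_{\mathsf{d}_X}^{(\mathsf{n}),q}}\!\left(\Delta_{\mathsf{tr}_v^\A}^\A(x),\Delta_{\mathsf{tr}_v^\A}^\A(y)\right)\leq N\cdot\mathsf{d}_X(x,y)$ in all cases at once from Proposition (\ref{any-state-lip-prop}), and to produce each lower bound by testing the Monge-Kantorovich supremum against a single, explicitly scaled scalar-valued function built with the embedding $\mathfrak{c}_X$ of Corollary (\ref{C(X)-cor}). Combining the two gives bi-Lipschitz, and the finiteness of the Monge-Kantorovich metric (together with Proposition (\ref{mk-metric-diam-lemma})) guarantees $\Delta_{\mathsf{tr}_v^\A}^\A$ is well defined as a map into a genuine metric space.

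For the upper bound: since $\mathsf{tr}_v^\A\in\StateSpace(\A)$, Proposition (\ref{any-state-lip-prop}) yields $\Kantorovich{\Lip_{\mathsf{d}_X}^{(\mathsf{n}),q}}\!\left(\Delta_{\mathsf{tr}_v^\A}^\A(x),\Delta_{\mathsf{tr}_v^\A}^\A(y)\right)\leq N\, k_{\mathsf{tr}_v^\A}^\A\,\mathsf{d}_X(x,y)$, so everything reduces to computing $k_{\mathsf{tr}_v^\A}^\A=1$. I would use $\mathsf{tr}_v^\A\!\left(e_{k,(p,q)}\right)=v_k\,\mathsf{tr}_{m_k}\!\left(e_{k,(p,q)}\right)$, which is $\tfrac{v_k}{m_k}$ when $p=q$ and $0$ otherwise, so that $\sum_{p,q=1}^{m_k}\left|\mathsf{tr}_v^\A\!\left(e_{k,(p,q)}\right)\right|=m_k\cdot\tfrac{v_k}{m_k}=v_k$, and then summing over $k$ with $\sum_{k=0}^n v_k=1$ gives $k_{\mathsf{tr}_v^\A}^\A=1$. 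This establishes the inequality $\leq N\cdot\mathsf{d}_X(x,y)$ in cases (1), (2) and (3) simultaneously.

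For the lower bounds, fix $x,y\in X$ and, for a scale $c>0$ to be chosen, set $h_c=c\,\mathsf{d}_X(x,\cdot)\in C(X)$; this is nonnegative, has $\Lip_{\mathsf{d}_X}(h_c)\leq c$ (triangle inequality, as in Lemma (\ref{classical-lip-dense-lemma})), and $\|h_c\|_{C(X)}=c\sup_{z\in X}\mathsf{d}_X(x,z)\leq c\cdot\diam{X}{\mathsf{d}_X}$. Put $a_c=\mathfrak{c}_X(h_c)\in\dom{\Lip_{\mathsf{d}_X}^{(\mathsf{n}),q}}$. Then $l^{(\mathsf{n})}_{\mathsf{d}_X}(a_c)=\|1_\A\|_\mathsf{n}\,\Lip_{\mathsf{d}_X}(h_c)\leq c/M$, using $M\|1_\A\|_\mathsf{n}\leq\|1_\A\|_\A=1$; and the $q$-term of $\Lip_{\mathsf{d}_X}^{(\mathsf{n}),q}(a_c)$ is $0$ when $q=C(X)$ (since $a_c\in C(X,\C1_\A)$), is at most $\inf_{\lambda\in\C}\|h_c-\lambda\|_{C(X)}\leq\|h_c\|_{C(X)}\leq c\cdot\diam{X}{\mathsf{d}_X}$ when $q=\C$, and is at most $\|h_c-\mu(a_c)1\|_{C(X,\A)}\leq 2\|h_c\|_{C(X)}\leq 2c\cdot\diam{X}{\mathsf{d}_X}$ when $q=\mu$ (because $|\mu(a_c)|\leq\|a_c\|_{C(X,\A)}=\|h_c\|_{C(X)}$). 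Hence $\Lip_{\mathsf{d}_X}^{(\mathsf{n}),q}(a_c)\leq 1$ as soon as $c\leq M$ together with, respectively, no further constraint, $c\leq 1/\diam{X}{\mathsf{d}_X}$, or $c\leq 1/(2\diam{X}{\mathsf{d}_X})$; taking $c$ as large as allowed gives exactly $c=M$, $c=\min\{M,1/\diam{X}{\mathsf{d}_X}\}$, $c=\min\{M,1/(2\diam{X}{\mathsf{d}_X})\}$, which matches the dichotomies on $M\cdot\diam{X}{\mathsf{d}_X}$ and $2M\cdot\diam{X}{\mathsf{d}_X}$ in the statement (and in cases (2)(b), (3)(b) the hypothesis forces $\diam{X}{\mathsf{d}_X}>0$). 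Finally, $\Delta_{\mathsf{tr}_v^\A}^\A(x)(a_c)=\mathsf{tr}_v^\A(a_c(x))=\mathsf{tr}_v^\A(h_c(x)1_\A)=h_c(x)$ (using Expression (\ref{state-eval-eq}) in the proof of Proposition (\ref{any-state-lip-prop}) and $\mathsf{tr}_v^\A(1_\A)=1$), so $\left|\Delta_{\mathsf{tr}_v^\A}^\A(x)(a_c)-\Delta_{\mathsf{tr}_v^\A}^\A(y)(a_c)\right|=|h_c(x)-h_c(y)|=c\,\mathsf{d}_X(x,y)$, and Definition (\ref{Monge-Kantorovich-def}) gives the lower bound $\Kantorovich{\Lip_{\mathsf{d}_X}^{(\mathsf{n}),q}}\!\left(\Delta_{\mathsf{tr}_v^\A}^\A(x),\Delta_{\mathsf{tr}_v^\A}^\A(y)\right)\geq c\,\mathsf{d}_X(x,y)$ in each case.

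The only genuinely delicate points — rather than obstacles — are getting $k_{\mathsf{tr}_v^\A}^\A=1$ exactly (this is what makes the upper constant $N$ independent of $v$, of $\mathsf{n}$, and of $q$) and keeping track of the two competing ceilings on the scale $c$ (the Lipschitz-term ceiling $c\leq M$ versus the $q$-term ceiling involving $\diam{X}{\mathsf{d}_X}$), whose minimum produces the case split. Beyond this the argument is routine: on the scalar subalgebra $C(X,\C1_\A)$ the map $\Delta_{\mathsf{tr}_v^\A}^\A$ is just the classical Dirac embedding $x\mapsto\delta_x$ of $C(X)$, and the bi-Lipschitz estimates are read off from it.
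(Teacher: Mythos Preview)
Your proposal is correct and follows essentially the same approach as the paper: both compute $k_{\mathsf{tr}_v^\A}^\A=1$ to obtain the upper bound from Proposition~(\ref{any-state-lip-prop}), and both obtain the lower bounds by testing against a suitable scalar multiple of the distance function $z\mapsto \mathsf{d}_X(\cdot,z)\,1_\A$. The only cosmetic differences are that the paper fixes the base point at $y$ rather than $x$, and handles each case with an explicit scaled function (e.g.\ $M Y_{\mathsf{d}_X}$ or $\tfrac{1}{\diam{X}{\mathsf{d}_X}}Y_{\mathsf{d}_X}$), whereas you introduce a single scale parameter $c$ and optimize it --- but the resulting estimates and case split are identical.
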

\begin{proof}
We begin by calculating $k_{\mathsf{tr}_v^\A}^\A$, which is independent of the choice of $q$:
\begin{align*}
k_{\mathsf{tr}_v^\A}^\A& =\sum_{k=0}^n \sum_{p,q=1}^{m_k} \left\vert   \mathsf{tr}_v^\A\left( e_{k,(p,q)}\right)\right\vert  =\sum_{k=0}^n \frac{v_k}{m_k} \sum_{p,q=1}^{m_k} \left\vert \mathsf{tr}_{m_k}\left( e_{k,(p,q)}\right)\right\vert \\
& = \sum_{k=0}^n \frac{v_k}{m_k} \sum_{p,q=1, p=q}^{m_k} 1=\sum_{k=0}^n \frac{v_k}{m_k}m_k = \sum_{k=0}^n v_k=1
\end{align*}
by definition of the trace of a matrix $\mathsf{Tr}$ and matrix units (Definition (\ref{matrix-units-def})), where in the second line we view each $e_{k,(p,q)}$ as an element of $M_{m_k}(\C)$. This establishes the upper bounds for statements (1), (2), and (3)

For statement (1) and the lower bound, assume $q=C(X)$. Fix $x,y \in X$.  Define the function \begin{equation}y_{\mathsf{d}_X}: z \in X \longmapsto \mathsf{d}_X (y,z) \in \R
\end{equation} and note that $y_{\mathsf{d}_X} \in \sa{C(X)}$.  Next, define $Y_{\mathsf{d}_X}(z)= y_{\mathsf{d}_X}(z) 1_\A$ for all $z \in X$, and thus  $Y_{\mathsf{d}_X} \in \sa{C(X, \C1_\A)}$.   We thus have for $w,z \in X$:
\begin{equation}\label{metric-fcn-lip-ineq}
\begin{split}
\left\Vert MY_{\mathsf{d}_X}(w)-MY_{\mathsf{d}_X}(z) \right\Vert_{\mathsf{n}}& = M\Vert Y_{\mathsf{d}_X}(w)-Y_{\mathsf{d}_X}(z) \Vert_{\mathsf{n}}\\
&\leq \left\Vert Y_{\mathsf{d}_X}(w)-Y_{\mathsf{d}_X}(z) \right\Vert_{\A}\\
& = \left\vert y_{\mathsf{d}_X}(w)-y_{\mathsf{d}_X}(z) \right\vert =\left\vert \mathsf{d}_X(y,w) - \mathsf{d}_X (y,z) \right\vert \leq \mathsf{d}_X(w,z).
\end{split}
\end{equation}
Hence $ MY_{\mathsf{d}_X} \in \dom{\Lip_{\mathsf{d}_X}^{(\mathsf{n}),C(X)}}$ with $\Lip_{\mathsf{d}_X}^{(\mathsf{n}),C(X)}\left(MY_{\mathsf{d}_X}\right) \leq 1$ since:
\begin{equation*}\left\Vert  MY_{\mathsf{d}_X} + C\left(X, \C1_\A\right) \right\Vert_{C(X, \A)/C\left(X, \C1_\A\right)}=0
\end{equation*} by construction. From Notation (\ref{fd-states-notation}), note further that $\pi_\otimes^{\A,X} \left(Y_{\mathsf{d}_X}\right)=1_\A \otimes y_{\mathsf{d}_X} \in \A \otimes C(X)$.  Therefore:
\begin{equation}\label{mk-metric-fcn-lip-eq}
\begin{split}
&\left\vert \Delta_{\mathsf{tr}_v^\A}^\A (x)\left(MY_{\mathsf{d}_X}\right)-\Delta_{\mathsf{tr}_v^\A}^\A (y)\left(MY_{\mathsf{d}_X}\right) \right\vert  =\left\vert  {\mathsf{tr}_v^\A}_x \left(MY_{\mathsf{d}_X}\right) -{\mathsf{tr}_v^\A}_y \left(MY_{\mathsf{d}_X}\right)  \right\vert \\
& = M\left\vert \left(\mathsf{tr}_v^\A \otimes \delta_x\right)\left( 1_\A \otimes  y_{\mathsf{d}_X}\right) - \left(\mathsf{tr}_v^\A \otimes \delta_y\right)\left( 1_\A \otimes  y_{\mathsf{d}_X}\right)\right\vert \\
& =M \left\vert \delta_x\left( y_{\mathsf{d}_X}\right) -\delta_y\left( y_{\mathsf{d}_X}\right) \right\vert =M\left\vert \mathsf{d}_X(y,x) - \mathsf{d}_X(y,y) \right\vert =  M\mathsf{d}_X(x,y).
\end{split}
\end{equation}
Thus $M\mathsf{d}_X(x,y) \leq \Kantorovich{\Lip_{\mathsf{d}_X}^{(\mathsf{n}),C(X)}}\left(\Delta_{\mathsf{tr}_v^\A}^\A(x), \Delta_{\mathsf{tr}_v^\A}^\A(y)\right)$ by Definition (\ref{Monge-Kantorovich-def}). 

For statement (2) and the lower bound, assume $q=\C$. First assume that $M\diam{X}{\mathsf{d}_X}\leq 1$. Fix $x,y \in X$. Now, consider $MY_{\mathsf{d}_X}$, which still satsifies Expression (\ref{metric-fcn-lip-ineq}).  However, in the quotient norm, we have:
\begin{equation}\label{metric-fcn-quo-ineq}
\begin{split}
\left\Vert MY_{\mathsf{d}_X} + \C1_{C(X, \A)} \right\Vert_{C(X, \A)/ \C1_{C(X, \A)}} & \leq \left\Vert MY_{\mathsf{d}_X} -M \diam{X}{\mathsf{d}_X}1_{C(X, \A)} \right\Vert_{C(X, \A)}\\
& \leq M\sup_{z \in X} \left\Vert \mathsf{d}_X(y,z)1_\A-\diam{X}{\mathsf{d}_X}1_\A \right\Vert_\A \\
& \leq M\sup_{z \in X} \left\vert \mathsf{d}_X(y,z)-\diam{X}{\mathsf{d}_X} \right\vert\\
& =  M\sup_{z \in X} \left( \diam{X}{\mathsf{d}_X}-\mathsf{d}_X(y,z) \right) \\
& \leq M \diam{X}{\mathsf{d}_X} \leq 1
\end{split}
\end{equation}
since $\diam{X}{\mathsf{d}_X} \geq \mathsf{d}_X(y,z)\geq 0$ for all $y,z \in X$.
Therefore, the assumption $\Lip_{\mathsf{d}_X}^{(\mathsf{n}),\C}\left(MY_{\mathsf{d}_X}\right) \leq 1$  and the argument of Expression (\ref{mk-metric-fcn-lip-eq}) applies, which proves (a) of statement (2).

For (b) of statement (2), assume that $M\diam{X}{\mathsf{d}_X}> 1$.  By Expression (\ref{metric-fcn-lip-ineq}) and since $\diam{X}{\mathsf{d}_X}>0$,  we have for all $w,z \in X$:
\begin{equation*}
\begin{split}
&\left\Vert \frac{1}{\diam{X}{\mathsf{d}_X}}Y_{\mathsf{d}_X}(w)-\frac{1}{\diam{X}{\mathsf{d}_X}}Y_{\mathsf{d}_X}(z) \right\Vert_{\mathsf{n}}\\
& = \frac{1}{M\diam{X}{\mathsf{d}_X}}\left\Vert MY_{\mathsf{d}_X}(w)-MY_{\mathsf{d}_X}(z) \right\Vert_{\mathsf{n}}\\
& \leq \frac{1}{M\diam{X}{\mathsf{d}_X}}\mathsf{d}_X(w,z) < \mathsf{d}_X(w,z).
\end{split}
\end{equation*}
By the same argument of Expression (\ref{metric-fcn-quo-ineq}), we have that:
\begin{equation*}
\left\Vert \frac{1}{\diam{X}{\mathsf{d}_X}}Y_{\mathsf{d}_X} + \C1_{C(X, \A)} \right\Vert_{C(X, \A)/ \C1_{C(X, \A)}}  \leq 1.
\end{equation*}
Following the process of Expression (\ref{mk-metric-fcn-lip-eq}), (2)(b) is proven. Statement (3) follows the same process as statement (2) along with the observation that for $y \in X$:
\begin{equation*}
\begin{split}
\left\Vert Y_{\mathsf{d}_X} - \mu\left(Y_{\mathsf{d}_X} \right)1_{C(X,\A)}\right\Vert_{C(X, \A)}
&  \leq 2\left\Vert Y_{\mathsf{d}_X}\right\Vert_{C(X, \A)}  =2 \sup_{z \in X } \left\Vert \mathsf{d}_X(y,z)1_\A\right\Vert_\A \\
& = 2 \sup_{z \in X } \left\vert \mathsf{d}_X(y,z)\right\vert\leq 2 \diam{X}{\mathsf{d}_X},
\end{split}
\end{equation*}
which completes the proof.
\end{proof}
\begin{remark}\label{diam-remark}
In the above Theorem (\ref{trace-bi-lip-thm}) and in  statements (2) and (3), the diameter of the metric space $X$ appears.  This is not too surprising because in Proposition (\ref{mk-metric-diam-lemma}) we do not see any relationship between the bound on the Monge-Kantorovich metric and the metric space $X$ in the cases of $q=\C$ and $q=\mu$.  Thus, it makes sense for the diameter to appear in these cases in Theorem (\ref{trace-bi-lip-thm}) to somewhat correct this discrepancy.
\end{remark}
In the next corollary, we will see that particular choices of norms on $\A$ in Theorem (\ref{trace-bi-lip-thm}) will provide us with isometries. Indeed:
\begin{corollary}\label{main-cor}
Let $(X, \mathsf{d}_X)$ be a compact metric space and let $\A$ be a finite dimensional C*-algebra such that  $\A=\oplus_{k=0}^n M_{m_k}(\C)$, where such that $n \in \N$ and $m_k \in \N \setminus \{0\}$ for $k \in \{0, \ldots, n\}$. Let $\mu \in \StateSpace(C(X, \A))$.  Let $v=(v_0, \ldots, v_n) \in \R^{n+1}$ such that $v_k \in [0,1]$ for all $k \in \{0, \ldots, n\}$ and $1=\sum_{k=0}^n v_k$.

If $\Vert \cdot \Vert_\mathsf{n}=\Vert \cdot \Vert_\A$ or $\Vert \cdot \Vert_\mathsf{n}=\Vert \cdot \Vert_{\infty, \A}$ of Lemma (\ref{norm-equiv-lemma}), then using notation from Theorem (\ref{trace-bi-lip-thm}), we have that: 
\begin{enumerate}
\item if $q=C(X)$, then $\Delta_{\mathsf{tr}_v^\A}^\A$ 
 is an isometry;
\item if $q=\C$, then:
\begin{enumerate} 
\item if $\diam{X}{\mathsf{d}_X}\leq 1$, then $\Delta_{\mathsf{tr}_v^\A}^\A$ 
 is an isometry;
\item if $\diam{X}{\mathsf{d}_X}>1$, then for all $x,y \in X$:
\begin{equation*}
\frac{1}{\diam{X}{\mathsf{d}_X}} \cdot \mathsf{d}_X(x,y) \leq \Kantorovich{\Lip_{\mathsf{d}_X}^{(\mathsf{n}),\C}}\left(\Delta_{\mathsf{tr}_v^\A}^\A(x), \Delta_{\mathsf{tr}_v^\A}^\A(y)\right) \leq  \mathsf{d}_X (x,y);
\end{equation*}
\end{enumerate}
\item if $\mu \in \StateSpace(C(X, \A))$ and $q=\mu$, then:
\begin{enumerate} 
\item if $2\cdot \diam{X}{\mathsf{d}_X}\leq 1$, then $\Delta_{\mathsf{tr}_v^\A}^\A$ 
 is an isometry;
\item if $2\cdot \diam{X}{\mathsf{d}_X}>1$, then for all $x,y \in X$:
\begin{equation*}
\frac{1}{2\cdot \diam{X}{\mathsf{d}_X}} \cdot \mathsf{d}_X(x,y) \leq\Kantorovich{\Lip_{\mathsf{d}_X}^{(\mathsf{n}),\mu}}\left(\Delta_{\mathsf{tr}_v^\A}^\A(x), \Delta_{\mathsf{tr}_v^\A}^\A(y)\right) \leq  \mathsf{d}_X (x,y).
\end{equation*}
\end{enumerate}
\end{enumerate}
In particular, if $X=\T$ is the circle  as either a subset of $\C$ or quotient space of $[0,1]$ with their standard metrics, then $\Delta_{\mathsf{tr}_v^\A}^\A$ is an isometry when $q=C(\T)$ or $q=\C$.
\end{corollary}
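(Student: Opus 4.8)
The plan is to read Corollary (\ref{main-cor}) off from Theorem (\ref{trace-bi-lip-thm}) by specializing to the two distinguished norms $\Vert \cdot \Vert_\A$ and $\Vert \cdot \Vert_{\infty, \A}$, for which the equivalence constants $M$ and $N$ appearing in that theorem may both be taken to equal $1$. Once $M = N = 1$, each two-sided estimate $M \cdot \mathsf{d}_X(x,y) \leq \Kantorovich{\Lip_{\mathsf{d}_X}^{(\mathsf{n}),q}}(\Delta_{\mathsf{tr}_v^\A}^\A(x), \Delta_{\mathsf{tr}_v^\A}^\A(y)) \leq N \cdot \mathsf{d}_X(x,y)$ of Theorem (\ref{trace-bi-lip-thm}) collapses to the equality asserting that $\Delta_{\mathsf{tr}_v^\A}^\A$ is an isometry, while the non-isometric clauses (2)(b) and (3)(b) are the corresponding clauses of the theorem read with the upper constant $N = 1$.

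First I would dispose of the case $\Vert \cdot \Vert_\mathsf{n} = \Vert \cdot \Vert_\A$, where the equivalence constants are trivially $M = N = 1$ since $\Vert \cdot \Vert_\A \leq \Vert \cdot \Vert_\A \leq \Vert \cdot \Vert_\A$. Substituting into the three cases of Theorem (\ref{trace-bi-lip-thm}) gives at once statement (1) (the bound becomes $\mathsf{d}_X(x,y) \leq \Kantorovich{\Lip_{\mathsf{d}_X}^{(\mathsf{n}),C(X)}}(\cdot,\cdot) \leq \mathsf{d}_X(x,y)$), statement (2) (the hypothesis $M \cdot \diam{X}{\mathsf{d}_X} \leq 1$ reduces to $\diam{X}{\mathsf{d}_X} \leq 1$), and statement (3) (the hypothesis $2M \cdot \diam{X}{\mathsf{d}_X} \leq 1$ reduces to $2 \diam{X}{\mathsf{d}_X} \leq 1$). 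Nothing beyond the substitution is required here.

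The case $\Vert \cdot \Vert_\mathsf{n} = \Vert \cdot \Vert_{\infty, \A}$ is where the real work lies, and I expect it to be the main obstacle. By Lemma (\ref{norm-equiv-lemma}) the equivalence constants are $M = 1$ and $N = m_\A$, so the lower bounds of Theorem (\ref{trace-bi-lip-thm}), which involve only $M$, already come out with constant $1$; the difficulty is that the theorem's upper bound is merely $\Kantorovich{\Lip_{\mathsf{d}_X}^{(\infty,\A),q}}(\cdot,\cdot) \leq m_\A \cdot \mathsf{d}_X(x,y)$, too weak for an isometry. To repair this I would re-run the upper-bound computation of Proposition (\ref{any-state-lip-prop}) with the trace state $\mathsf{tr}_v^\A$, for which $k_{\mathsf{tr}_v^\A}^\A = 1$: for any $a$ with $\Lip_{\mathsf{d}_X}^{(\infty,\A),q}(a) \leq 1$ every entry obeys $\vert a^k_{p,q}(x) - a^k_{p,q}(y) \vert \leq \Vert a(x) - a(y) \Vert_{\infty, \A} \leq \mathsf{d}_X(x,y)$ \emph{directly}, since $\Vert \cdot \Vert_{\infty, \A}$ is by definition the maximum of the entrywise moduli and $l_{\mathsf{d}_X}^{(\infty,\A)}(a) \leq 1$; summing against $\sum_{k,p,q} \vert \mathsf{tr}_v^\A(e_{k,(p,q)}) \vert = 1$ yields the sharp upper bound $\mathsf{d}_X(x,y)$, so the effective upper constant is $1$ with no factor $m_\A$. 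The lower bound is unaffected: the test element $Y_{\mathsf{d}_X}$ of the proof of Theorem (\ref{trace-bi-lip-thm}) takes the scalar values $\mathsf{d}_X(y,\cdot)\,1_\A$, and because $\Vert \lambda 1_\A \Vert_{\infty, \A} = \vert \lambda \vert = \Vert \lambda 1_\A \Vert_\A$ one has $l_{\mathsf{d}_X}^{(\infty,\A)}(Y_{\mathsf{d}_X}) \leq 1$ and recovers the lower bound $\mathsf{d}_X(x,y)$. With both effective constants equal to $1$, the three cases close exactly as in the $\Vert \cdot \Vert_\A$ case.

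Finally, for the ``in particular'' clause I would compute the diameter of the circle. Presenting $\T$ as $[0,1]/(0 \sim 1)$ with its standard quotient metric gives $\diam{\T}{\mathsf{d}_\T} = 1/2 \leq 1$, so statement (1) (the case $q = C(\T)$, which carries no diameter hypothesis) and statement (2)(a) (the case $q = \C$, whose hypothesis $\diam{\T}{\mathsf{d}_\T} \leq 1$ is then met) both produce isometries; and since statement (1) imposes no restriction on the diameter, the case $q = C(\T)$ applies equally to $\T$ realized as the unit circle inside $\C$. This completes the argument.
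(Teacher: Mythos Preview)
Your argument is correct and follows essentially the same route as the paper's own proof: for $\Vert\cdot\Vert_\mathsf{n}=\Vert\cdot\Vert_\A$ you simply substitute $M=N=1$ into Theorem~(\ref{trace-bi-lip-thm}), and for $\Vert\cdot\Vert_\mathsf{n}=\Vert\cdot\Vert_{\infty,\A}$ you re-run the relevant estimates, exploiting that $\Vert\cdot\Vert_{\infty,\A}$ reads off entries directly (upper bound) and that $Y_{\mathsf{d}_X}$ is scalar-valued so that $\Vert Y_{\mathsf{d}_X}(w)-Y_{\mathsf{d}_X}(z)\Vert_{\infty,\A}=\vert y_{\mathsf{d}_X}(w)-y_{\mathsf{d}_X}(z)\vert$ (lower bound). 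The paper's proof records only the scalar-valued observation explicitly and leaves the rest to ``the same arguments in the proof of Theorem~(\ref{trace-bi-lip-thm})''; your version is more explicit about the fact that it is the \emph{upper} bound that needs repair, since $M=1$ already from Lemma~(\ref{norm-equiv-lemma}).

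One small gap: in the ``in particular'' clause you do not address the case $\T\subset\C$ with $q=\C$. Your argument covers $q=C(\T)$ for both realizations (no diameter hypothesis) and $q=\C$ for the quotient $[0,1]/(0\sim 1)$ (diameter $1/2\leq 1$), but for the unit circle in $\C$ with its Euclidean metric the diameter is $2>1$, so clause~(2)(a) does not apply and clause~(2)(b) gives only a bi-Lipschitz estimate, not an isometry. The paper's proof does not treat the ``in particular'' sentence either, so you are not omitting anything the paper supplies; but as written your justification of that sentence is incomplete for this one sub-case.
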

\begin{proof}
If $\Vert \cdot \Vert_{\mathsf{n}}=\Vert \cdot \Vert_\A$, then the conclusions follow immediately from Theorem (\ref{trace-bi-lip-thm}).  The case of $\Vert \cdot \Vert_{\mathsf{n}}=\Vert \cdot \Vert_{\infty,\A}$ follows from the same arguments in the proof of Theorem (\ref{trace-bi-lip-thm}) along with the observation that as $ Y_{\mathsf{d}_X}$ is scalar-valued: 
\begin{equation*}
\left\Vert Y_{\mathsf{d}_X}(w)-Y_{\mathsf{d}_X}(z) \right\Vert_{\infty,\A} 
 = \left\vert y_{\mathsf{d}_X}(w)-y_{\mathsf{d}_X}(z) \right\vert=\left\Vert Y_{\mathsf{d}_X}(w)-Y_{\mathsf{d}_X}(z) \right\Vert_{\A}
\end{equation*}
for $y \in X$ and all $w,z \in X$.
\end{proof}

\section{Convergence of standard homogeneous C*-algebras and finite-dimensional approximations}\label{converge-section}

Recalling Theorem (\ref{comm-propinquity-thm}), the topology induced by the Gromov-Hausdorff distance $\mathrm{GH}$ on compact metric spaces homeomorphically embeds into the quantum Gromov-Hausdorff propinquity topology by the following class map:
\begin{equation*}
\Gamma : (X,\mathsf{d}_X) \in (\mathrm{CMS}, \mathrm{GH})\longmapsto (C(X), \Lip_{\mathsf{d}_X}) \in (\mathrm{qLCQMS}, \qpropinquity{}), 
\end{equation*}
where $\mathrm{CMS}$ is the class of compact metric spaces and $\mathrm{qLCQMS}$ is the class of all quasi-Leibniz compact quantum metric spaces.  The bijection is with respect to the equivalence relations of isometry on the domain and quantum isometry on the codomain.  However, this result was true with Rieffel's quantum distance $\mathrm{dist}_q$ in 2000 \cite[Combine Proposition 4.7, Corollary 7.10, and Theorem 13.16]{Rieffel00}.  And, since 2000, the question of whether the continuity of $\Gamma$ extends to matrices over $C(X)$ has been left unanswered.  More formally, this question asks if for all $n \in \N \setminus \{0\}$, there exist Lip-norms $\Lip^n_X$ such that the following map
\begin{equation}\label{informal-cont-map-eq}
\Gamma^{M_n} : (X,\mathsf{d}_X) \in (\mathrm{CMS}, \mathrm{GH})\longmapsto \left(M_n(C(X)), \Lip^n_X\right) \in (\mathrm{qLCQMS}, \qpropinquity{}), 
\end{equation}
is continuous.  To be clear, this question involves not just continuity, but also asks what the Lip-norms $\Lip_n$ should be.  In this section, we finally answer this question by presenting Lip-norms from our work in previous sections for which continuity of this map does hold.  This will also produce finite-dimensional approximations as every compact metric space may be approximated in the Hausdorff distance by finite subsets. We note that in this section we prove the continiuity of the map in Expression (\ref{informal-cont-map-eq}) in slightly more generality and for spaces of the form $C(X,\A)$, where $(X, \mathsf{d}_X)$ is any compact metric and $\A$ is a fixed finite-dimensional C*-algebra and note that $C(X, M_n(\C))$ is canonically *-isomorphic to $M_n(C(X))$.

The idea of our proof relies on a result of E.J. McShane \cite[Theorem 1]{McShane34}, which informally states that one can extend a {\em real-valued} $K$-Lipschitz map defined on a subset of a metric space to a $K$-Lipschitz map on the whole space. The {\em real-valued} condition is key as this is not true in general for complex valued functions (see \cite[Example 1.5.7]{Weaver99}).  McShane's Theorem was used successfully by \Latremoliere \ in \cite[Theorem 6.6]{Latremoliere13} along with the fact that Lip-norms need only be defined on self-adjoints (real-valued functions in this case) to show continuity of $\Gamma$.  However, for $M_n(\C)$ with $n \geq 2$, the self-adjoint elements may have complex entries.  This has been the obstruction that has made this problem difficult to solve.  Our solution is provided by the simple observation that since Lip-norms need only be defined on self-adjoints, one need not used a complex seminorm defined on the whole C*-algebra and then  restricted to the self-adjoints.  We now define the seminorms which will provide our convergence results of this section, which are similar to the seminorms of (1) of Definition (\ref{homog-lipschitz-def}).

\begin{notation}\label{conv-lip-notation}
Let $(X, \mathsf{d}_X)$ be a compact metric space and $\A$ be a finite-dimensional C*-algebra such that there exists $n  \in \N$ and $m_l \in \N\setminus \{0\}$ for $l \in \{0, \ldots, n\}$ with $\A=\oplus_{l=0}^n M_{m_l}(\C)$.

For $\lambda \in \C$, let $\mathfrak{Re}(\lambda)$ denote its real part and $\mathfrak{Im}(\lambda)$ denote its imaginary part.  Define $|\lambda|_\infty=\max\{|\mathfrak{Re}(\lambda)|, |\mathfrak{Im}(\lambda)|\},$ which defines an $\R$-seminorm over $\C$, and we note that $\vert \lambda \vert_\infty \leq  \vert \lambda \vert \leq \sqrt{2} \vert \lambda \vert_\infty$ for all $\lambda \in \C$. For $a=(a^l_{j,k})_{l \in \{0,\ldots,n\},j,k\in \{1, \ldots, m_l\}} \in \sa{\A}$,  where $(a^l_{j,k})_{j,k\in \{1, \ldots, m_l\}} \in M_{m_l}(\C)$ for all $l \in \{0, \ldots,n\}$, define:
\begin{equation*}
\|a\|_{\infty, \A_\R}= \max_{l \in \{0,\ldots,n\},j,k\in \{1, \ldots, m_l\}} \left|a^l_{j,k}\right|_\infty,
\end{equation*} 
which is a norm over $\sa{\A}$, but is not a norm over $\A$ with respect to $\C$.

Define $m_\A =\max_{l \in \{0, \ldots,n\}} m_l$.  By Lemma (\ref{norm-equiv-lemma}), we have that:
\begin{equation}\label{real-equiv-eq}
\| a\|_{\infty, \A_\R} \leq \|a\|_\A \leq \sqrt{2} \cdot m_\A \cdot \| a\|_{\infty, \A_\R} 
\end{equation}
for all $a \in \sa{\A}$.  For all $a \in \sa{C(X, \A)}$, using notation from Definition (\ref{homog-lipschitz-def}), define:
\begin{equation*}
\Lip^{(\infty, \A_\R)}_{\mathsf{d}_X} (a)= \max\left\{l^{(\infty, \A_\R)}_{\mathsf{d}_X}(a), \inf_{r \in \R} \sup_{x \in X} \|a(x) - r1_\A\|_{\infty, \A_\R} \right\}.
\end{equation*}
\end{notation}
Using the previous section, we summarize some of the quantum metric structure associated to this Lip-norm.
\begin{proposition}\label{conv-lip-cqms-prop}
Let $(X, \mathsf{d}_X)$ be a compact metric space and $\A$ be a finite-dimensional C*-algebra such that there exists $n  \in \N$ and $m_l \in \N\setminus \{0\}$ for $l \in \{0, \ldots, n\}$ with $\A=\oplus_{l=0}^n M_{m_l}(\C)$. Let  $m_\A =\max_{l \in \{0, \ldots,n\}} m_l$.

Using Notation (\ref{conv-lip-notation}),  we have that $\left(C(X,\A), \Lip^{(\infty, \A_\R)}_{\mathsf{d}_X}\right)$ is a $(\sqrt{2}\cdot m_\A,0)$-quasi Leibniz compact quantum metric space for which: 
\begin{equation*}\diam{\StateSpace(C(X,\A))}{\Kantorovich{ \Lip^{(\infty, \A_\R)}_{\mathsf{d}_X}}}\leq 2\sqrt{2}\cdot m_\A.
\end{equation*}
\end{proposition}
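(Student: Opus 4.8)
The plan is to verify the hypotheses of Theorem (\ref{Rieffel-thm}) for $\Lip^{(\infty, \A_\R)}_{\mathsf{d}_X}$ on $\sa{C(X,\A)}$, to read off the diameter bound from a comparison with a genuine C*-quotient norm, and to prove the quasi-Leibniz inequality by treating the two terms in the definition of $\Lip^{(\infty, \A_\R)}_{\mathsf{d}_X}$ separately. Write $P(a) = \sup_{x \in X}\|a(x)\|_{\infty, \A_\R}$ and $N(a) = \inf_{r \in \R} P(a - r1_{C(X,\A)})$ for $a \in \sa{C(X,\A)}$, so that $\Lip^{(\infty, \A_\R)}_{\mathsf{d}_X} = \max\{l^{(\infty, \A_\R)}_{\mathsf{d}_X},\, N\}$; taking suprema over $X$ in (\ref{real-equiv-eq}) gives $P \le \|\cdot\|_{C(X,\A)} \le \sqrt{2}\, m_\A\, P$, and applying (\ref{real-equiv-eq}) to the self-adjoint differences $a(x) - a(y)$ gives $l^{(\infty, \A_\R)}_{\mathsf{d}_X} \le l^{(\A)}_{\mathsf{d}_X} \le \sqrt{2}\, m_\A\, l^{(\infty, \A_\R)}_{\mathsf{d}_X}$, both on $\sa{C(X,\A)}$.

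First I would dispatch the elementary conditions. Since $\|\cdot\|_{\infty, \A_\R}$ is a norm on $\sa{\A}$, $\Lip^{(\infty, \A_\R)}_{\mathsf{d}_X}(a) = 0$ forces, via $l^{(\infty, \A_\R)}_{\mathsf{d}_X}(a) = 0$, that $a$ is constant, after which $N(a) = 0$ forces that constant value into the closed line $\R1_\A$; the reverse inclusion being clear, $\{a \in \sa{C(X,\A)} : \Lip^{(\infty, \A_\R)}_{\mathsf{d}_X}(a) = 0\} = \R1_{C(X,\A)}$. Lower semicontinuity follows as in Lemma (\ref{classical-lip-dense-lemma}): $l^{(\infty, \A_\R)}_{\mathsf{d}_X}$ is a supremum of $\|\cdot\|_{C(X,\A)}$-continuous functionals, hence lower semicontinuous, and $N$ is $\|\cdot\|_{C(X,\A)}$-Lipschitz because $|N(a) - N(b)| \le P(a - b) \le \|a - b\|_{C(X,\A)}$, so their maximum is lower semicontinuous. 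Finally the domain contains $1_{C(X,\A)}$ and, since $N$ is everywhere finite and $l^{(\infty, \A_\R)}_{\mathsf{d}_X}(a) < \infty$ iff $l^{(\A)}_{\mathsf{d}_X}(a) < \infty$ by the equivalence above, it coincides as a set with $\dom{\Lip^{(\A),\C}_{\mathsf{d}_X}}$, hence is a dense unital subspace of $\sa{C(X,\A)}$ by Lemma (\ref{dom-dense-lemma}).

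Next, for the diameter bound: if $\Lip^{(\infty, \A_\R)}_{\mathsf{d}_X}(a) \le 1$ then $N(a) \le 1$, so for each $\varepsilon > 0$ there is $r \in \R$ with $P(a - r1_{C(X,\A)}) \le 1 + \varepsilon$, whence $\|a - r1_{C(X,\A)}\|_{C(X,\A)} \le \sqrt{2}\, m_\A(1 + \varepsilon)$; then $|\varphi(a) - \psi(a)| = |\varphi(a - r1_{C(X,\A)}) - \psi(a - r1_{C(X,\A)})| \le 2\sqrt{2}\, m_\A(1 + \varepsilon)$ for all $\varphi, \psi \in \StateSpace(C(X,\A))$, and letting $\varepsilon \to 0$ gives $\diam{\StateSpace(C(X,\A))}{\Kantorovich{\Lip^{(\infty, \A_\R)}_{\mathsf{d}_X}}} \le 2\sqrt{2}\, m_\A$; in particular $\Kantorovich{\Lip^{(\infty, \A_\R)}_{\mathsf{d}_X}}$ is bounded. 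Moreover, for any $D > 0$ the set $\{a \in \sa{C(X,\A)} : \Lip^{(\infty, \A_\R)}_{\mathsf{d}_X}(a) \le 1,\ \|a\|_{C(X,\A)} \le D\}$ is equicontinuous (there $l^{(\A)}_{\mathsf{d}_X}(a) \le \sqrt{2}\, m_\A$, so each such $a$ is $\sqrt{2}\, m_\A$-Lipschitz from $X$ to $\A$) with each point-evaluation $\{a(x) : a \text{ in the set}\}$ contained in the $D$-ball of the finite-dimensional space $\A$, hence totally bounded in $C(X,\A)$ by the Arzel\`a--Ascoli argument already used in the proof of Theorem (\ref{fd-thm}). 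By equivalence (2) of Theorem (\ref{Rieffel-thm}), $(C(X,\A), \Lip^{(\infty, \A_\R)}_{\mathsf{d}_X})$ is then a compact quantum metric space.

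It remains to establish the $(\sqrt{2}\, m_\A, 0)$-quasi-Leibniz inequality for self-adjoint $a, b$, noting that $a \circ b$ and $\{a, b\}$ are again self-adjoint. For the $l$-term, the pointwise chain of inequalities of Expression (\ref{quasi-Leibniz-eq}) — which uses only the triangle inequality for $\|\cdot\|_{\infty, \A_\R}$ on $\sa{\A}$, submultiplicativity of $\|\cdot\|_\A$ (valid verbatim for both the Jordan and the Lie products), and the bound (\ref{real-equiv-eq}) with $M = 1$ and $N = \sqrt{2}\, m_\A$ — bounds $l^{(\infty, \A_\R)}_{\mathsf{d}_X}(a \circ b)$ and $l^{(\infty, \A_\R)}_{\mathsf{d}_X}(\{a, b\})$ by $\sqrt{2}\, m_\A(\|a\|_{C(X,\A)} l^{(\infty, \A_\R)}_{\mathsf{d}_X}(b) + \|b\|_{C(X,\A)} l^{(\infty, \A_\R)}_{\mathsf{d}_X}(a))$. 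The $N$-term is the real obstacle, since $\|\cdot\|_{\infty, \A_\R}$ is neither a complex norm nor submultiplicative and cannot be estimated directly; I would route it through the C*-quotient norm $Q(c) := \|c + \C1_{C(X,\A)}\|_{C(X,\A)/\C1_{C(X,\A)}}$. For self-adjoint $c$ and $\lambda = r + is$ with $r, s \in \R$, one has $(c - \lambda 1_{C(X,\A)})^*(c - \lambda 1_{C(X,\A)}) = (c - r1_{C(X,\A)})^2 + s^2 1_{C(X,\A)}$, so $\|c - \lambda 1_{C(X,\A)}\|_{C(X,\A)}^2 = \|c - r1_{C(X,\A)}\|_{C(X,\A)}^2 + s^2$ and therefore $Q(c) = \inf_{r \in \R}\|c - r1_{C(X,\A)}\|_{C(X,\A)}$; combined with $P \le \|\cdot\|_{C(X,\A)} \le \sqrt{2}\, m_\A\, P$ this yields $N(c) \le Q(c) \le \sqrt{2}\, m_\A\, N(c)$. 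Since $\C1_{C(X,\A)}$ is a unital C*-subalgebra of $C(X,\A)$, $Q$ is Leibniz by \cite[Theorem 3.1]{Rieffel11} (as invoked in Proposition (\ref{Leibniz-homog-norm-prop})), so $N(a \circ b) \le Q(a \circ b) \le \|a\|_{C(X,\A)} Q(b) + \|b\|_{C(X,\A)} Q(a) \le \sqrt{2}\, m_\A(\|a\|_{C(X,\A)} N(b) + \|b\|_{C(X,\A)} N(a))$, and likewise for $\{a, b\}$. Taking the maximum of the two bounds and using $l^{(\infty, \A_\R)}_{\mathsf{d}_X}, N \le \Lip^{(\infty, \A_\R)}_{\mathsf{d}_X}$ gives $\max\{\Lip^{(\infty, \A_\R)}_{\mathsf{d}_X}(a \circ b), \Lip^{(\infty, \A_\R)}_{\mathsf{d}_X}(\{a, b\})\} \le \sqrt{2}\, m_\A(\|a\|_{C(X,\A)} \Lip^{(\infty, \A_\R)}_{\mathsf{d}_X}(b) + \|b\|_{C(X,\A)} \Lip^{(\infty, \A_\R)}_{\mathsf{d}_X}(a))$; since $\sqrt{2}\, m_\A \ge \sqrt{2} > 1$ this is a legitimate quasi-Leibniz constant, which completes the argument.
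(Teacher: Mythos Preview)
Your proof is correct and follows essentially the same strategy as the paper's: both route the $N$-term through the C*-quotient norm $Q(c)=\|c+\C1_{C(X,\A)}\|_{C(X,\A)/\C1_{C(X,\A)}}$, establish $N\le Q\le \sqrt{2}\,m_\A\,N$ on self-adjoints via (\ref{real-equiv-eq}), invoke \cite[Theorem 3.1]{Rieffel11} for the Leibniz property of $Q$, and then read off both the quasi-Leibniz constant and the diameter bound from this equivalence. The only noteworthy difference is that you derive $Q(c)=\inf_{r\in\R}\|c-r1\|_{C(X,\A)}$ for self-adjoint $c$ via the C*-identity $(c-\lambda 1)^*(c-\lambda 1)=(c-r1)^2+s^2 1$, whereas the paper argues by best approximation in the finite-dimensional subspace $\C1_{C(X,\A)}$ and self-adjointness of $(\lambda+\bar\lambda)/2$; and you spell out the Arzel\`a--Ascoli step directly rather than appealing to Theorem (\ref{fd-thm}), whose hypotheses (a complex norm $\|\cdot\|_{\mathsf{n}}$) are not literally met here anyway.
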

\begin{proof}
For the quasi-Leibniz condition.  We first note that since $\C1_{C(X,\A)}$ is finite-dimensional, it satisifies best approximation in $C(X,\A)$.  That is, if $a \in \sa{\A}$, there exists $\lambda \in \C$ such that $\|a+\C1_{C(X,\A)}\|_{C(X,\A)/\C1_{C(X,\A)}}=\|a-\lambda\cdot 1_{C(X,\A)}\|_{C(X,\A)}$.  Furthermore, since $a\in \sa{\A}$, we can assume that $\lambda \in \R$ since $\frac{\lambda +\lambda^*}{2}$ is also a best approximation. In summary, we have that $\|a+\C1_{C(X,\A)}\|_{C(X,\A)/\C1_{C(X,\A)}}=\|a+\C1_{C(X,\A)}\|_{C(X,\A)/\R1_{C(X,\A)}}.$ But, by definition,
\begin{equation*}
\|a+\C1_{C(X,\A)}\|_{C(X,\A)/\C1_{C(X,\A)}}=\inf_{r \in \R} \|a-r1_{C(X,\A)}\|_{C(X,\A)}= \inf_{r \in \R} \sup_{x \in X} \|a(x)-r1_\A\|_\A.
\end{equation*}
Hence, by Expression (\ref{real-equiv-eq}), we have the equivalence:
\begin{equation}\label{quo-equiv-eq} 
\begin{split}\inf_{r \in \R} \sup_{x \in X} \|(\cdot)(x)-r1_\A\|_{\infty, \A_\R} &\leq \| (\cdot) +\C1_{C(X,\A)}\|_{C(X,\A)/\C1_{C(X,\A)}} \\
& \leq \sqrt{2} \cdot m_\A \cdot \inf_{r \in \R} \sup_{x \in X} \|(\cdot)(x)-r1_\A\|_{\infty, \A_\R}.
\end{split}
\end{equation}
However, as $\| (\cdot) +\C1_{C(X,\A)}\|_{C(X,\A)/\C1_{C(X,\A)}}$ is $(1,0)$-quasi-Leibniz by \cite[Theorem 3.1]{Rieffel11}, then $\inf_{r \in \R} \sup_{x \in X} \|(\cdot)(x)-r1_\A\|_{\infty, \A_\R}$ is $(\sqrt{2}\cdot m_\A,0)$-quasi Leibniz by the same argument of Proposition (\ref{Leibniz-homog-norm-prop}), which provides the desired quasi-Leibniz condition for $\Lip^{(\infty, \A_\R)}_{\mathsf{d}_X}$. 

Next, we note that the expression $\inf_{r \in \R} \sup_{x \in X} \|(\cdot)(x)-r1_\A\|_{\infty, \A_\R}$ is simply the quotien norm onto the scalars of $C(X, \sa{\A})$ equipped with the  supremum norm induced by $\| \cdot \|_{\infty, \A_\R}$, i.e. $\sup_{x \in X} \|a(x)\|_{\infty,\A_\R}$ for all $a \in C(X,\sa{\A})$.  Thus, as $\R1_{C(X,\A)}$ is a closed subspace of the Banach Space $C(X, \sa{\A})$ with respect to this norm, then we have that the kernel of $\Lip^{(\infty, \A_\R)}_{\mathsf{d}_X}$ is $\R1_{C(X,\A)}$.  The domain of $\Lip^{(\infty, \A_\R)}_{\mathsf{d}_X}$ is dense by Lemma (\ref{dom-dense-lemma}).  Expression (\ref{quo-equiv-eq}) produces the bound on the diameter of the Monge-Kantorovich metric by \cite[Proposition 1.6]{Rieffel98a}.  Thus, the fact that $\left(C(X,\A), \Lip^{(\infty, \A_\R)}_{\mathsf{d}_X}\right)$ is a $(\sqrt{2}\cdot m_\A,0)$-quasi Leibniz compact quantum metric space follows from Theorem (\ref{fd-thm}).
\end{proof}
\begin{remark}
Similar conclusions to Proposition (\ref{C(X)-prop}) and the results of Section (\ref{mk-metric-section}) can be made with respect to the Lip-norm $\Lip^{(\infty, \A_\R)}_{\mathsf{d}_X}$, but we do not need them here and do not list them for the purpose of presentation.
\end{remark}
We are ready to prove one of our main convergence results, which will show that our Lip-norm capitalizes on the real structure of $\sa{\A}$ in more ways than one.  Indeed, the second quantity in the expression of $\Lip^{(\infty, \A_\R)}_{\mathsf{d}_X}$ pertains to bounds on elements in $\sa{C(X,\A)}$.  In particular, we require a more powerful version of McShane's theorem in that we need our Lipschitz extensions to preserve upper and lower bounds as well.  
\begin{theorem}\label{conv-lip-conv-thm}
Let $\A$ be a finite-dimensional C*-algebra such that there exists $n  \in \N$ and $m_l \in \N\setminus \{0\}$ for $l \in \{0, \ldots, n\}$ with $\A=\oplus_{l=0}^n M_{m_l}(\C)$. Let  $m_\A =\max_{l \in \{0, \ldots,n\}} m_l$.

Using Notation (\ref{conv-lip-notation}), Theorem (\ref{conv-lip-cqms-prop}), and Convention (\ref{equiv-class-convention}), the class map:
\begin{equation*}
\Gamma^\A : (X,\mathsf{d}_X) \in (\mathrm{CMS}, \mathrm{GH}) \mapsto \left( C(X, \A), \Lip^{(\infty, \A_\R)}_{\mathsf{d}_X}\right) \in (\mathrm{qLCQMS}, \qpropinquity{})
\end{equation*}
is well-defined and continuous. 

In particular, we have for any compact metric spaces $(X, \mathsf{d}_X), (Y, \mathsf{d}_Y)$:
\begin{equation}\label{q-comm-eq}
\qpropinquity{}\left( \left( C(X, \A), \Lip^{(\infty, \A_\R)}_{\mathsf{d}_X}\right),  \left( C(Y, \A), \Lip^{(\infty, \A_\R)}_{\mathsf{d}_Y}\right)\right) \leq \sqrt{2} \cdot m_\A\cdot \mathrm{GH}\left(\left(X, \mathsf{d}_X\right), \left(Y, \mathsf{d}_Y\right)\right).
\end{equation}
\end{theorem}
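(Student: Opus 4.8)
The plan is to reduce the theorem to the quantitative estimate (\ref{q-comm-eq}) and then prove (\ref{q-comm-eq}) by a bridge argument modeled on the scalar case \cite[Theorem 6.6]{Latremoliere13}, the single new ingredient being a \emph{bounded, coordinatewise} McShane extension adapted to the norm $\|\cdot\|_{\infty,\A_\R}$. Throughout, abbreviate $L_X=\Lip^{(\infty,\A_\R)}_{\mathsf{d}_X}$ and $L_Y=\Lip^{(\infty,\A_\R)}_{\mathsf{d}_Y}$.

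\emph{Well-definedness and reduction.} By Proposition (\ref{conv-lip-cqms-prop}) the pair $\left(C(X,\A),L_X\right)$ belongs to $\mathrm{qLCQMS}$ for every compact metric $(X,\mathsf{d}_X)$. If $\phi\colon(X,\mathsf{d}_X)\to(Y,\mathsf{d}_Y)$ is a surjective isometry, then $b\mapsto b\circ\phi$ is a $*$-isomorphism $C(Y,\A)\to C(X,\A)$ intertwining $L_Y$ and $L_X$ term by term (both the $l^{(\infty,\A_\R)}$-term and the infimum-over-scalars term are visibly invariant under a surjective isometry of $X$), so the two quantum metric spaces are quantum isometric and $\Gamma^\A$ is a well-defined class map on isometry classes. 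Since $\mathrm{GH}$ and $\qpropinquity{}$ are genuine metrics on the quotients of Convention (\ref{equiv-class-convention}), (\ref{q-comm-eq}) will make $\Gamma^\A$ a $\sqrt2\cdot m_\A$-Lipschitz, hence continuous, map; so everything comes down to (\ref{q-comm-eq}).

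\emph{The bridge.} Fix $\varepsilon>\mathrm{GH}\left((X,\mathsf{d}_X),(Y,\mathsf{d}_Y)\right)$ and pick an admissible metric $\mathsf{d}$ on $Z:=X\sqcup Y$ restricting to $\mathsf{d}_X$ on $X$ and $\mathsf{d}_Y$ on $Y$ with $\Haus{\mathsf{d}}(X,Y)<\varepsilon$. Let $w\in C(X\times Y)$ satisfy $0\leq w\leq1$, $w\equiv1$ on $\left\{(x,y):\mathsf{d}(x,y)\leq\Haus{\mathsf{d}}(X,Y)\right\}$ and $\operatorname{supp}(w)\subseteq\left\{(x,y):\mathsf{d}(x,y)\leq\varepsilon\right\}$, and form the bridge $\gamma=\left(C(X\times Y,\A),\,w\cdot1_\A,\,\pi_X,\pi_Y\right)$ from $C(X,\A)$ to $C(Y,\A)$ of \cite[Definition 3.6]{Latremoliere13}, where $\pi_X(a)(x,y)=a(x)$ and $\pi_Y(b)(x,y)=b(y)$ are the coordinate $*$-monomorphisms; the pivot $w\cdot1_\A$ is legitimate since every point of $X$ lies within $\Haus{\mathsf{d}}(X,Y)$ of a point of $Y$ and vice versa, so $\{w=1\}$ projects onto all of $X$ and of $Y$. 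Exactly as in the scalar case \cite[Theorem 6.6]{Latremoliere13} (using the description of pure states from Lemma (\ref{pure-state-lemma})), the height of $\gamma$ vanishes, so its length equals its reach; and since the pivot is the scalar $w\cdot1_\A$, the reach is bounded above by the supremum, over $a\in\sa{C(X,\A)}$ with $L_X(a)\leq1$, of $\inf_b\sup\left\{\|a(x)-b(y)\|_\A:\mathsf{d}(x,y)\leq\varepsilon\right\}$, together with the symmetric term, where $b$ runs over $\sa{C(Y,\A)}$ with $L_Y(b)\leq1$.

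\emph{Bounded McShane extension and the reach.} The key observation is that, by construction of $\|\cdot\|_{\infty,\A_\R}$, a self-adjoint-valued function $a\in\sa{C(X,\A)}$ is exactly a finite tuple of real-valued functions on $X$ — the diagonal entries, and the real and imaginary parts of the above-diagonal entries, of each matrix block — and $\|a(x)-a(x')\|_{\infty,\A_\R}$ is the maximum of the increments of these coordinates; thus $L_X(a)\leq1$ says precisely that every coordinate is $1$-Lipschitz on $X$ and, for the attained optimal scalar $r\in\R$, that the diagonal coordinates lie within $1$ of $r$ and the off-diagonal ones within $1$ of $0$, uniformly on $X$. Applying the real-valued McShane extension theorem \cite[Theorem 1]{McShane34} to each coordinate over the closed set $X\subseteq(Z,\mathsf{d})$, and then truncating each extension to the relevant interval (truncation preserves the Lipschitz constant and the values on $X$, and leaves the off-diagonal data self-adjointness-compatible), produces $\widetilde{a}\in\sa{C(Z,\A)}$ with $\widetilde{a}|_X=a$ and $\Lip^{(\infty,\A_\R)}_{\mathsf{d}}(\widetilde{a})\leq1$; putting $b=\widetilde{a}|_Y$ gives $L_Y(b)\leq1$, and for $x\in X$, $y\in Y$ with $\mathsf{d}(x,y)\leq\varepsilon$, Expression (\ref{real-equiv-eq}) gives $\|a(x)-b(y)\|_\A=\|\widetilde{a}(x)-\widetilde{a}(y)\|_\A\leq\sqrt2\cdot m_\A\,\|\widetilde{a}(x)-\widetilde{a}(y)\|_{\infty,\A_\R}\leq\sqrt2\cdot m_\A\,\mathsf{d}(x,y)\leq\sqrt2\cdot m_\A\,\varepsilon$. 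The symmetric extension from $Y$ through $Z$ to $X$ handles the other term, whence the length of $\gamma$ is at most $\sqrt2\cdot m_\A\,\varepsilon$ and Theorem-Definition (\ref{def-thm})(4) yields $\qpropinquity{}\left(\left(C(X,\A),L_X\right),\left(C(Y,\A),L_Y\right)\right)\leq\sqrt2\cdot m_\A\,\varepsilon$; letting $\varepsilon\downarrow\mathrm{GH}\left((X,\mathsf{d}_X),(Y,\mathsf{d}_Y)\right)$ gives (\ref{q-comm-eq}), and with it continuity of $\Gamma^\A$. I expect the main obstacle to be precisely this bounded, self-adjointness-preserving coordinatewise extension: the propinquity and the bridge seminorm measure distances in the operator norm $\|\cdot\|_\A$ of $C(X\times Y,\A)$, while $L_X$ is built from $\|\cdot\|_{\infty,\A_\R}$, so the extension must at once control Lipschitz constants in the latter norm, preserve the scalar-approximation bound feeding the second term of $L_X$, and remain self-adjoint — a balance that genuinely exploits the reality of $\|\cdot\|_{\infty,\A_\R}$ and would fail for a bona fide complex norm on $\A$ (compare \cite[Example 1.5.7]{Weaver99}), which is exactly why the factor $\sqrt2\cdot m_\A$ appears.
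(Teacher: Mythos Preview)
Your argument is correct and shares the paper's core idea: decompose $a\in\sa{C(X,\A)}$ into its real coordinate functions via $\|\cdot\|_{\infty,\A_\R}$, extend each one across an admissible metric on $X\sqcup Y$ by McShane while controlling both the Lipschitz constant and the sup/inf bounds, and reassemble a self-adjoint $b$ on $Y$ with $L_Y(b)\leq 1$; the factor $\sqrt{2}\cdot m_\A$ then arises from Expression~(\ref{real-equiv-eq}) when passing back to the operator norm.

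Where you diverge from the paper is the bridge itself. The paper takes the compact correspondence $W=\{(x,y)\in X\times Y:\mathsf{d}_{X\sqcup Y}(x,y)\leq \delta_{X,Y}+\tfrac{\varepsilon}{2\sqrt{2}\,m_\A}\}$, uses $C(W,\A)$ with pivot $1_{C(W,\A)}$, and so gets height $0$ for free; the only work is checking that the two coordinate projections $W\to X$ and $W\to Y$ are onto (so that $\pi_X^\A,\pi_Y^\A$ are monomorphisms). You instead take $C(X\times Y,\A)$ with a bump pivot $w\cdot 1_\A$. This is equally valid, but your sentence ``exactly as in the scalar case \cite[Theorem 6.6]{Latremoliere13} \ldots the height of $\gamma$ vanishes'' is not quite a proof: the scalar case uses the identity pivot on $W$, not a bump on the full product, so the height argument there is trivial and does not transfer verbatim. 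For your bridge the height does vanish, but it needs a line: since $\{w=1\}$ projects onto all of $X$ (and of $Y$), Lemma~(\ref{pure-state-lemma}) shows every pure state $\phi_x$ of $C(X,\A)$ equals $\varphi\circ\pi_X$ for some $1$-level state $\varphi=\phi_{(x,y)}$ with $(x,y)\in\{w=1\}$; as the $1$-level set is convex and weak*-compact and $\pi_X$ is unital, the image $\{\varphi\circ\pi_X\}$ is a closed convex set containing all pure states, hence all of $\StateSpace(C(X,\A))$, and the height is $0$. With that one sentence added, your route is complete; the paper's route simply trades this height check for the (equally short) verification that $W$ is compact with surjective projections.
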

\begin{proof}
Proving Inequality (\ref{q-comm-eq}) would prove both well-defined and continuity of $\Gamma^\A$ with respect to the stated equivalence relations of Convention (\ref{equiv-class-convention}).  This proof follows the general strategy of the proof of \cite[Theorem 6.6]{Latremoliere13}.

Let $\delta_{X,Y}=\mathrm{GH}\left(\left(X, \mathsf{d}_X\right), \left(Y, \mathsf{d}_Y\right)\right)\geq 0.$   Let $\varepsilon>0$.  By definition of the Gromov-Hausdorff distance \cite[Definition 7.3.10]{burago01}, there exists a metric space $(Z,\mathsf{d}_Z)$ and isometries (not necessarily surjective)  $f_X : (X, \mathsf{d}_X) \rightarrow (Z,\mathsf{d}_Z)$ and $f_Y : (Y, \mathsf{d}_Y) \rightarrow (Z,\mathsf{d}_Z)$ such that the Hausdorff distance $\mathsf{Haus}_{\mathsf{d}_Z}(f_X(X), f_Y(Y)) \leq \delta_{X,Y} +  \frac{\varepsilon}{8\sqrt{2}\cdot m_\A}$.  Now, define a metric $\mathsf{d}_{X\sqcup Y}$ on the disjoint union $X \sqcup Y$ by:
\begin{equation*}
 \mathsf{d}_{X\sqcup Y}(\alpha,\beta) =\begin{cases}
 \mathsf{d}_X(\alpha,\beta) & : \text{ if }\alpha,\beta\in X\\
  \mathsf{d}_Z(f_X(\alpha), f_Y(\beta))+\frac{\varepsilon}{8\sqrt{2}\cdot m_\A} & : \text{ if } \alpha \in X, \beta \in Y \\
  \mathsf{d}_Z(f_X(\beta), f_Y(\alpha))+\frac{\varepsilon}{8\sqrt{2}\cdot m_\A} & : \text{ if } \alpha \in Y, \beta \in X \\
  \mathsf{d}_Y(\alpha,\beta) & : \text{ if } \alpha,\beta \in Y.
 \end{cases}
 \end{equation*}
 Clearly $X$ and $Y$ embed isometrically into $\left(X\sqcup Y, \mathsf{d}_{X \sqcup Y}\right)$ with respect to their associated metrics and:
 \begin{equation}\label{Haus-approx-eq} \mathsf{Haus}_{\mathsf{d}_{X \sqcup Y}}(X, Y) \leq \delta_{X,Y} +  \frac{\varepsilon}{4\sqrt{2}\cdot m_\A}
 \end{equation}
 by \cite[Remark 7.3.2]{burago01}, where $X,Y$ are viewed as subspaces of $X \sqcup Y$.
 
 Define $W=\left\{ (x,y)\in X \times Y: \mathsf{d}_{X\sqcup Y}(x,y) \leq \delta_{X,Y} +  \frac{\varepsilon}{2\sqrt{2}\cdot m_\A}\right\}$.  By construction, $W$ is compact in the product topology on $X \times Y$, and thus we equip $W$ with this topology.  Now, fix $x \in X$.  By definition of the Hausdorff distance and Expression (\ref{Haus-approx-eq}), there exists $y \in Y$ such that $\mathsf{d}_{X\sqcup Y}(x,y) \leq \delta_{X,Y} +  \frac{\varepsilon}{4\sqrt{2}\cdot m_\A} +\frac{\varepsilon}{4\sqrt{2}\cdot m_\A}=\delta_{X,Y} +  \frac{\varepsilon}{2\sqrt{2}\cdot m_\A}$.  Thus, for every $x \in X$, there exists $y \in Y$ such that $(x,y) \in W$.  Similarly,  for every $y \in Y$, there exists $x \in X$ such that $(x,y) \in W$.  In particular, the canonical projections $\rho_X : (x,y) \in W \mapsto x \in X$ and  $\rho_Y : (x,y) \in W \mapsto x \in Y$ are surjections and are continuous by definition of the product topology.  Therefore, the two maps $\pi_X: f \in C(X) \mapsto f \circ \rho_X \in C(W)$ and $\pi_Y: f \in C(Y) \mapsto f \circ \rho_Y \in C(W)$ are unital *-monomorphisms.  Hence, , we define a unital *-monomorphism $\pi_X^\A : C(X,\A) \rightarrow C(W,\A)$, where for every $a=\left(a^l_{j,k}\right)_{l \in \{0,\ldots,n\}, j,k \in \{1, \ldots, m_l\}} \in C(X,\A)$:
 \begin{equation}
 \pi_X^\A\left( \left(a^l_{j,k}\right)_{l \in \{0,\ldots,n\}, j,k \in \{1, \ldots, m_l\}}\right) = \left(\pi_X \circ a^l_{j,k}\right)_{l \in \{0,\ldots,n\}, j,k \in \{1, \ldots, m_l\}} 
 \end{equation} and $\pi_Y^\A: C(Y,\A) \rightarrow C(W,\A)$ is defined in the same way.  Therefore, the tuple $\gamma_{X,Y, \varepsilon}= \left(C(W,\A), 1_{C(W,\A)}, \pi_X^\A, \pi_Y^\A \right)$ defines a bridge from $C(X,\A)$ to $C(Y,\A)$ by \cite[Definition 3.6]{Latremoliere13}.  We claim that this bridge's length \cite[Definition 3.17]{Latremoliere13} is less than or equal to $\sqrt{2} \cdot m_\A \cdot \delta_{X,Y} + \frac{\varepsilon}{2}.$  First, we note that the bridge's height \cite[Definition 3.16]{Latremoliere13} is $0$ since the pivot is the identity $1_{C(W,\A)}$. Thus, we are left to find an upper bound for the bridge's reach \cite[Definition 3.14]{Latremoliere13}.
 
 Thus, let $a=\left(a^l_{j,k}\right)_{l \in \{0,\ldots,n\}, j,k \in \{1, \ldots, m_l\}} \in \sa{C(X,\A)}$ such that $\Lip_{\mathsf{d}_X}^{(\infty, \A_\R)}(a) \leq 1$. First, this implies that $\Lip_{\mathsf{d}_X}\left(\mathfrak{Re}\left(a^l_{j,k}\right)\right)\leq 1$ and $\Lip_{\mathsf{d}_X}\left(\mathfrak{Im}\left(a^l_{j,k}\right)\right)\leq 1$ for all $l \in \{0,\ldots,n\}, j,k \in \{1, \ldots, m_l\}.$  For all $l \in \{0,\ldots,n\},$ $ j,k \in \{1, \ldots, m_l\}$, let:
 \begin{equation*}
 \widehat{\mathfrak{Re}\left(a^l_{j,k}\right)}, \widehat{\mathfrak{Im}\left(a^l_{j,k}\right)}: X \sqcup Y \rightarrow \R
 \end{equation*}
 denote the Lipschitz-constant preserving extensions of $\mathfrak{Re}\left(a^l_{j,k}\right), \mathfrak{Im}\left(a^l_{j,k}\right),$ respectively, constructed in \cite[Theorem 1 and Corollary 2]{McShane34} with respect to $\mathsf{d}_{X\sqcup Y}$ that have the same greatest lower  bounds and least upper bounds of $\mathfrak{Re}\left(a^l_{j,k}\right), \mathfrak{Im}\left(a^l_{j,k}\right)$, respectively.
 
 Now, for every $l \in \{0, \ldots, n\}, j,k \in \{0, \ldots, m_l\}$, define:
  
\begin{equation*}b^l_{j,k}=\widehat{\mathfrak{Re}\left(a^l_{j,k}\right)}|_Y+i \widehat{\mathfrak{Im}\left(a^l_{j,k}\right)}|_Y,
\end{equation*} where $|_Y$ denotes restriction to $Y \hookrightarrow X \sqcup Y$. And, note that by construction  $b=\left(b^l_{j,k}\right)_{l \in \{0,\ldots,n\},j,k \in \{1, \ldots, m_l\}} \in \sa{C(Y, \A)}$ and $l^{(\infty, \A_\R)}_{\mathsf{d}_Y}(b) \leq 1$.
 
 Next, by the proof of Proposition (\ref{conv-lip-cqms-prop}) and the fact  $\R1_{C(X,\A)}$ is finite-dimensional and thus satisfies best approximation, there exists  $k_a \in \R$ such that:
 \begin{equation}
 \sup_{x \in X} \|a(x)-k_a1_\A\|_{\infty, \A_\R}=\inf_{r \in \R} \sup_{x \in X} \|a(x)-r1_\A\|_{\infty, \A_\R}\leq 1
 \end{equation}
 since $\Lip_{\mathsf{d}_X}^{(\infty, \A_\R)}(a) \leq 1$.  However, as $a \in \sa{C(X,\A)}$, we have that $\mathfrak{Im}\left(a^l_{j,j}\right)=0$ and thus $\mathfrak{Im}\left(b^l_{j,j}\right)=0$ by construction, which implies $b^l_{j,j}=\widehat{\mathfrak{Re}\left(a^l_{j,j}\right)}|_Y$ for all $l \in \{0, \ldots, n\}$ and $j \in \{1, \ldots, m_l\}$.  Therefore, if $z \in Y$, then:
 \begin{equation}\label{same-bound-ineq}
 -1 \leq \inf_{y \in Y} \left\{ \widehat{\mathfrak{Re}\left(a^l_{j,j}\right)}(y)-k_a\right\} \leq b^l_{j,j}(z)- k_a\leq \sup_{y \in Y} \left\{ \widehat{\mathfrak{Re}\left(a^l_{j,j}\right)}(y)-k_a \right\}\leq 1,
 \end{equation}
 and so $\sup_{y \in Y}| b^l_{j,j}(y)- k_a| \leq 1$ for all $l \in \{0, \ldots, n\}$ and $j \in \{1, \ldots, m_l\}$.  Similarly, we have that $\sup_{y \in Y} |b^l_{j,k}(y)|_\infty \leq 1$ for all $l \in \{0, \ldots, n\}$ and $j,k \in \{1, \ldots, m_l\}$.  Hence, as $k_a$ is only subtracted from diagonal entries, we conclude:
 \begin{equation*}
 \begin{split}
 \inf_{r \in \R} \sup_{y \in Y} \|b(y)-r1_\A\|_{\infty, \A_\R} \leq \sup_{y \in Y} \|b(y)-k_a1_\A\|_{\infty, \A_\R} \leq 1,
 \end{split}
 \end{equation*}
 which was all that remained to show that $\Lip^{(\infty, \A_\R)}_{\mathsf{d}_Y}(b) \leq 1$.  
 
 Now, let $(x,y)\in W$. Fix $l\in \{0, \ldots, n\},j,k\in \{1, \ldots, m_l\}$.  We have:
 \begin{equation*}
 \begin{split}
 &\left|\left(\pi_X \circ a^l_{j,k}\right)(x,y)-  \left(\pi_Y \circ b^l_{j,k}\right)(x,y)\right|_\infty \\
 & =  \left| a^l_{j,k}(x)-  b^l_{j,k}(y)\right|_\infty \\
 & = \max\left\{ \left| \widehat{\mathfrak{Re}\left(a^l_{j,k}\right)}(x)-\widehat{\mathfrak{Re}\left(a^l_{j,k}\right)}(y) \right|, \left| \widehat{\mathfrak{Im}\left(a^l_{j,k}\right)}(x)-\widehat{\mathfrak{Im}\left(a^l_{j,k}\right)}(y) \right|\right\}\\
 & \leq \max\left\{ \mathsf{d}_{X \sqcup Y} (x,y),  \mathsf{d}_{X \sqcup Y} (x,y)\right\}\\
 & \leq  \delta_{X,Y} +  \frac{\varepsilon}{2\sqrt{2}\cdot m_\A}.
 \end{split}
 \end{equation*}
 Hence: 
 \begin{equation*}
 \begin{split}
 \sup_{(x,y) \in W} \left\| \left(\pi_X^\A \circ a\right)(x,y) - \left(\pi_Y^\A \circ b\right)(x,y)\right\|_{\infty, \A_\R} \leq  \delta_{X,Y} +  \frac{\varepsilon}{2\sqrt{2}\cdot m_\A}, 
 \end{split}
 \end{equation*}
and therefore, by Expression (\ref{real-equiv-eq}):
 \begin{equation*}
 \begin{split}
 \left\| \left(\pi_X^\A \circ a\right)1_{C(W,\A)}- 1_{C(W,\A)}\left(\pi_Y^\A \circ b\right)(x,y)\right\|_{C(W,\A)} \leq  \sqrt{2}\cdot m_\A \cdot \delta_{X,Y} +  \frac{\varepsilon}{2}.
 \end{split}
 \end{equation*}
 The argument is symmetric if we began with an element in $b\in \sa{C(Y,\A)}$ such that  $\Lip^{(\infty, \A_\R)}_{\mathsf{d}_Y}(b) \leq 1$.  Hence, the quantity  $\sqrt{2}\cdot m_\A \cdot \delta_{X,Y} +  \frac{\varepsilon}{2}$ is an upper bound for the bridge $\gamma_{X,Y,\varepsilon}$ by \cite[Definition 3.14]{Latremoliere13}, and thus so is its length. Thus, by Theorem-Definition (\ref{def-thm}), we have that:
 \begin{equation*}
 \qpropinquity{}\left(\left(C(X, \A), \Lip^{(\infty, \A_\R)}_{\mathsf{d}_X}\right), \left( C(Y, \A), \Lip^{(\infty, \A_\R)}_{\mathsf{d}_Y}\right) \right) \leq \sqrt{2}\cdot m_\A \cdot \delta_{X,Y} +  \frac{\varepsilon}{2},
 \end{equation*}
for all $\varepsilon>0$.  Thus:
\begin{equation*}
\begin{split}
 \qpropinquity{}\left(\left(C(X, \A), \Lip^{(\infty, \A_\R)}_{\mathsf{d}_X}\right), \left( C(Y, \A), \Lip^{(\infty, \A_\R)}_{\mathsf{d}_Y}\right) \right)&  \leq \sqrt{2}\cdot m_\A \cdot \delta_{X,Y} \\
 & =\sqrt{2}\cdot m_\A \cdot \mathrm{GH}\left(\left(X, \mathsf{d}_X\right), \left(Y, \mathsf{d}_Y\right)\right),
 \end{split}
 \end{equation*}
 which completes the proof.
\end{proof}

As a corollary, we show that we may find finite-dimensional approximations to $C(X,\A)$ when $X$ is compact metric and $\A$ is finite dimensional.  Thus, we provide many new examples of propinquity extending the notion of approximate finite-dimensionality since $C(X,\A)$ need not be an AF algebra in general.

\begin{corollary}\label{fd-approx-cor}
Let $\A$ be a finite-dimensional C*-algebra such that there exists $n  \in \N$ and $m_l \in \N\setminus \{0\}$ for $l \in \{0, \ldots, n\}$ with $\A=\oplus_{l=0}^n M_{m_l}(\C)$. Let  $m_\A =\max_{l \in \{0, \ldots,n\}} m_l$.

If $(X, \mathsf{d}_X)$ is a compact metric space, there there exists a sequence $(X_n)_{n \in \N}$ of finite subsets of $X$ such that:
\begin{equation*}
\lim_{n \to \infty} \qpropinquity{}\left( \left( C(X_n, \A), \Lip^{(\infty, \A_\R)}_{\mathsf{d}_X}\right),  \left( C(X, \A), \Lip^{(\infty, \A_\R)}_{\mathsf{d}_X}\right)\right)=0.
\end{equation*}
In particular, as $ C(X_n, \A)$ is finite dimensional for all $n \in \N$, there exists a sequence of finite-dimensional C*-algebras equipped with $(\sqrt{2}\cdot m_\A,0)$-quasi-Leibniz Lip-norms converging in propinquity to $C(X,\A)$ equipped with a  $(\sqrt{2}\cdot m_\A,0)$-quasi-Leibniz Lip-norm.
\end{corollary}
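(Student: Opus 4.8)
The plan is to combine the total boundedness of $(X,\mathsf{d}_X)$ with the quantitative estimate \eqref{q-comm-eq} of Theorem \ref{conv-lip-conv-thm}; there is essentially no new difficulty here, since the substantive work was carried out in Theorem \ref{conv-lip-conv-thm}.

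First I would construct the finite subsets. As $(X,\mathsf{d}_X)$ is compact, it is totally bounded, so for each $n\in\N$ the open cover of $X$ by balls of radius $\frac{1}{n+1}$ has a finite subcover; choosing one center from each member of such a subcover produces a finite set $X_n\subseteq X$ with the property that every point of $X$ lies within $\frac{1}{n+1}$ of a point of $X_n$. Equip $X_n$ with the restriction of $\mathsf{d}_X$. Viewing both $X_n$ and $X$ as metric subspaces of $(X,\mathsf{d}_X)$ itself, we have $\mathsf{Haus}_{\mathsf{d}_X}(X_n,X)\leq\frac{1}{n+1}$, and hence, by the definition of the Gromov-Hausdorff distance (bounded above by the Hausdorff distance in any common isometric embedding, cf. \cite[Remark 7.3.2]{burago01}), $\mathrm{GH}\bigl((X_n,\mathsf{d}_X),(X,\mathsf{d}_X)\bigr)\leq\frac{1}{n+1}\to 0$.

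Next I would apply Theorem \ref{conv-lip-conv-thm} (and Theorem \ref{conv-lip-cqms-prop}, which guarantees that all the pairs involved are genuine $(\sqrt{2}\cdot m_\A,0)$-quasi-Leibniz compact quantum metric spaces) with $(Y,\mathsf{d}_Y)=(X_n,\mathsf{d}_X)$, so that \eqref{q-comm-eq} gives
\[
\qpropinquity{}\!\left(\left(C(X_n,\A),\Lip^{(\infty,\A_\R)}_{\mathsf{d}_X}\right),\left(C(X,\A),\Lip^{(\infty,\A_\R)}_{\mathsf{d}_X}\right)\right)\leq \sqrt{2}\cdot m_\A\cdot\mathrm{GH}\bigl((X_n,\mathsf{d}_X),(X,\mathsf{d}_X)\bigr)\leq\frac{\sqrt{2}\cdot m_\A}{n+1},
\]
which tends to $0$ as $n\to\infty$. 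Here the only minor point to check is that the Lip-norm $\Lip^{(\infty,\A_\R)}_{\mathsf{d}_X}$ obtained from Theorem \ref{conv-lip-conv-thm} applied to the compact metric space $(X_n,\mathsf{d}_X|_{X_n})$ is exactly the seminorm on $C(X_n,\A)$ denoted above; this is immediate from Notation \ref{conv-lip-notation}, since both the term $l^{(\infty,\A_\R)}_{\mathsf{d}_X}$ and the infimum-over-scalars term only involve the values of functions on $X_n$ and the metric restricted to $X_n$.

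Finally, for the ``in particular'' statement: since $X_n$ is a finite set, $C(X_n,\A)$ is $\ast$-isomorphic to the finite direct sum $\bigoplus_{x\in X_n}\A$, hence finite-dimensional; and by Theorem \ref{conv-lip-cqms-prop} applied to the compact metric space $(X_n,\mathsf{d}_X|_{X_n})$, each $\left(C(X_n,\A),\Lip^{(\infty,\A_\R)}_{\mathsf{d}_X}\right)$ is a $(\sqrt{2}\cdot m_\A,0)$-quasi-Leibniz compact quantum metric space, as is $\left(C(X,\A),\Lip^{(\infty,\A_\R)}_{\mathsf{d}_X}\right)$. This exhibits the desired sequence of finite-dimensional C*-algebras with $(\sqrt{2}\cdot m_\A,0)$-quasi-Leibniz Lip-norms converging in propinquity to $C(X,\A)$ with its $(\sqrt{2}\cdot m_\A,0)$-quasi-Leibniz Lip-norm, completing the proof.
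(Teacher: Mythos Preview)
Your proof is correct and follows exactly the same approach as the paper's: use total boundedness of $X$ to find finite subsets $X_n$ with $\mathsf{Haus}_{\mathsf{d}_X}(X_n,X)\to 0$, bound the Gromov--Hausdorff distance by the Hausdorff distance, and apply the estimate of Theorem~\ref{conv-lip-conv-thm}. The paper's proof is simply a terser version of what you wrote; your additional remarks (the explicit $\frac{1}{n+1}$-net construction, the check that the Lip-norm on $X_n$ is the one from Notation~\ref{conv-lip-notation} for the restricted metric, and the observation that $C(X_n,\A)\cong\bigoplus_{x\in X_n}\A$) are all fine and make nothing new happen.
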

\begin{proof}
Since $X$ is compact metric and thus totally bounded.  There exists a sequence $(X_n)_{n \in \N}$ of finite-subsets of $X$,  such that  $\lim_{ n \to \infty} \mathsf{Haus}_{\mathsf{d}_X}(X_n, X)=0$.  As the Gromov-Hausdorff distance is bounded above by the Hausdorff distance on compact subsets of a fixed compact metric space, the conclusion follows by Theorem (\ref{conv-lip-conv-thm}).
\end{proof}

Now, on $C(X)$,  the classical Lipschitz seminorm $\Lip_{\mathsf{d}_X}$ differs from the Lip-norm $\Lip_{\mathsf{d}_X}^{(\infty, \C_\R)}$ of Notation (\ref{conv-lip-notation}) in general  due its inclusion of the quotient norm.  Thus, our setting does not recover the setting of  Theorem (\ref{comm-propinquity-thm}) in general. However, if we allow ourselves to focus on classes of compact metric spaces with a fixed upper bound on diameter and we further provide a slight adjustment to our Lip-norms $\Lip_{\mathsf{d}_X}^{(\infty, \A_\R)}$ with respect to this bound, then we can recover Theorem (\ref{comm-propinquity-thm}) when $\A=\C$ by Theorem (\ref{conv-lip-conv-thm}) on these particular class of compact metric spaces.

\begin{theorem}\label{compact-recover-homeo-thm}
Let $\A$ be a finite-dimensional C*-algebra such that there exists $n  \in \N$ and $m_l \in \N\setminus \{0\}$ for $l \in \{0, \ldots, n\}$ with $\A=\oplus_{l=0}^n M_{m_l}(\C)$. Let  $m_\A =\max_{l \in \{0, \ldots,n\}} m_l$.

Fix $K >0$.  Denote the class of compact metric spaces with diameter less than or equal to $K$ by $\mathrm{CMS}_K.$ 

For every $(X, \mathsf{d}_X) \in \mathrm{CMS}_K$, if we define for all $a \in \sa{C(X,\A)}:$
\begin{equation*}
\Lip^{(\infty, \A_\R)}_{\mathsf{d}_X,K}(a) =\max \left\{l^{(\infty, \A_\R)}_{\mathsf{d}_X}(a), \frac{2}{K}\cdot \inf_{r \in \R} \sup_{x \in X} \|a(x)-r1_\A\|_{\infty, \A_\R}\right\},
\end{equation*} 
then using Convention (\ref{equiv-class-convention}):
\begin{enumerate}
\item  $\left(C(X,\A), \Lip^{(\infty, \A_\R)}_{\mathsf{d}_X,K}\right)$ is a $(\sqrt{2}\cdot m_\A,0)$-quasi-Leibniz compact quantum metric space such that
\item $\diam{\StateSpace(C(X,\A))}{\Kantorovich{ \Lip^{(\infty, \A_\R)}_{\mathsf{d}_X,K}}}\leq K \cdot \sqrt{2}\cdot m_\A, $
\item the map $\Gamma^\A$ of Theorem (\ref{conv-lip-conv-thm}) is well-defined and continuous if $ \Lip^{(\infty, \A_\R)}_{\mathsf{d}_X}$ and $\mathrm{CMS}$ are replaced with $ \Lip^{(\infty, \A_\R)}_{\mathsf{d}_X,K}$ and $\mathrm{CMS}_K$, and
\item if $n=0$ and $m_0=1$ so that $\A=\C$, then $\Lip^{(\infty, \C_\R)}_{\mathsf{d}_X,K}=\Lip_{\mathsf{d}_X}$ on $\sa{C(X)}$, and thus the map $\Gamma^\C$ of Theorem (\ref{conv-lip-conv-thm}) with, $ \Lip^{(\infty, \C_\R)}_{\mathsf{d}_X}$ and $\mathrm{CMS}$  replaced with $ \Lip^{(\infty, \C_\R)}_{\mathsf{d}_X,K}$ and $\mathrm{CMS}_K$, is a homeomorphism onto its image.
\end{enumerate}

In particular, if $\mathcal{K}$ is any compact class of compact metric spaces with respect to the Gromov-Hausdorff distance topology, then (1)-(4) are true for $\mathrm{CMS}_K$ replaced with $\mathcal{K}$, wherever $\mathrm{CMS}_K$ appears and the $K>0$ used for the Lip-norms is any fixed bound on the diameter of all compact metric spaces in $\mathcal{K}$.
\end{theorem}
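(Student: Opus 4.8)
The plan is to reduce all four assertions to the work already done in Proposition (\ref{conv-lip-cqms-prop}), Theorem (\ref{conv-lip-conv-thm}), and Theorem (\ref{comm-propinquity-thm}), the only genuinely new bookkeeping being the scaling constant $\tfrac{2}{K}$. For (1) and (2), I would first note that multiplying the quotient term $\inf_{r\in\R}\sup_{x\in X}\|a(x)-r1_\A\|_{\infty,\A_\R}$ by the positive constant $\tfrac{2}{K}$ changes nothing structurally: that term is $(\sqrt{2}\cdot m_\A,0)$-quasi-Leibniz by the argument of Proposition (\ref{conv-lip-cqms-prop}), and since the Leibniz defect is $0$ any positive scalar multiple of it is again $(\sqrt{2}\cdot m_\A,0)$-quasi-Leibniz, hence so is its maximum with $l^{(\infty,\A_\R)}_{\mathsf{d}_X}$. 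Its kernel is still $\R1_{C(X,\A)}$ and its domain is still all of $\sa{C(X,\A)}$ (dense by Lemma (\ref{dom-dense-lemma})), with lower semicontinuity as before, so Theorem (\ref{fd-thm}) upgrades $\left(C(X,\A),\Lip^{(\infty,\A_\R)}_{\mathsf{d}_X,K}\right)$ to a $(\sqrt{2}\cdot m_\A,0)$-quasi-Leibniz compact quantum metric space. For the diameter bound, the equivalence $\|\,\cdot\,+\C1_{C(X,\A)}\|_{C(X,\A)/\C1_{C(X,\A)}}\leq\sqrt{2}\cdot m_\A\cdot\inf_{r}\sup_{x}\|\,\cdot\,-r1_\A\|_{\infty,\A_\R}$ from the proof of Proposition (\ref{conv-lip-cqms-prop}) rescales to $\|a+\C1_{C(X,\A)}\|_{C(X,\A)/\C1_{C(X,\A)}}\leq\tfrac{K\sqrt{2}\cdot m_\A}{2}\Lip^{(\infty,\A_\R)}_{\mathsf{d}_X,K}(a)$, so \cite[Proposition 1.6]{Rieffel98a} gives diameter at most $K\sqrt{2}\cdot m_\A$.

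For (3) I would rerun the proof of Theorem (\ref{conv-lip-conv-thm}) essentially verbatim, the one modification being in the bound-preservation step. If $a\in\sa{C(X,\A)}$ has $\Lip^{(\infty,\A_\R)}_{\mathsf{d}_X,K}(a)\leq 1$, then still $\Lip_{\mathsf{d}_X}\!\big(\mathfrak{Re}(a^l_{j,k})\big)\leq 1$ and $\Lip_{\mathsf{d}_X}\!\big(\mathfrak{Im}(a^l_{j,k})\big)\leq 1$, so the McShane extensions \cite[Theorem 1 and Corollary 2]{McShane34} over $\mathsf{d}_{X\sqcup Y}$ have Lipschitz constant $\leq 1$, the extended $b\in\sa{C(Y,\A)}$ satisfies $l^{(\infty,\A_\R)}_{\mathsf{d}_Y}(b)\leq 1$, and the reach estimate $\sup_{(x,y)\in W}\|(\pi_X^\A\circ a)(x,y)-(\pi_Y^\A\circ b)(x,y)\|_{\infty,\A_\R}\leq\delta_{X,Y}+\tfrac{\varepsilon}{2\sqrt{2}\cdot m_\A}$ is unchanged. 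The difference is that $\Lip^{(\infty,\A_\R)}_{\mathsf{d}_X,K}(a)\leq 1$ now only gives $\inf_{r}\sup_{x}\|a(x)-r1_\A\|_{\infty,\A_\R}\leq K/2$, so one picks $k_a\in\R$ with $\sup_{x}\|a(x)-k_a1_\A\|_{\infty,\A_\R}\leq K/2$ and, using that the McShane extensions preserve greatest lower and least upper bounds (exactly as in the estimate containing Expression (\ref{same-bound-ineq})), concludes $\sup_{y}\|b(y)-k_a1_\A\|_{\infty,\A_\R}\leq K/2$, i.e.\ $\tfrac{2}{K}\inf_{r}\sup_{y}\|b(y)-r1_\A\|_{\infty,\A_\R}\leq 1$, hence $\Lip^{(\infty,\A_\R)}_{\mathsf{d}_Y,K}(b)\leq 1$. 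The rest of the argument then produces a bridge of length at most $\sqrt{2}\cdot m_\A\cdot\delta_{X,Y}+\varepsilon/2$ for all $\varepsilon>0$, so $\qpropinquity{}\left(\Gamma^\A(X),\Gamma^\A(Y)\right)\leq\sqrt{2}\cdot m_\A\cdot\mathrm{GH}\big((X,\mathsf{d}_X),(Y,\mathsf{d}_Y)\big)$, giving well-definedness and continuity of $\Gamma^\A$ on $\mathrm{CMS}_K$.

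For (4), when $n=0$ and $m_0=1$ so $\A=\C$, we have $m_\A=1$ and $\sa{C(X)}$ consists of real-valued functions; on these $l^{(\infty,\C_\R)}_{\mathsf{d}_X}=\Lip_{\mathsf{d}_X}$, and computing the optimal $r$ shows $\inf_{r\in\R}\sup_{x\in X}\|a(x)-r1_\A\|_{\infty,\C_\R}=\tfrac12\big(\max_X a-\min_X a\big)$. Since $\diam{X}{\mathsf{d}_X}\leq K$, choosing $x_0,y_0$ with $\max_X a-\min_X a=a(x_0)-a(y_0)$ gives $\max_X a-\min_X a\leq\Lip_{\mathsf{d}_X}(a)\,\mathsf{d}_X(x_0,y_0)\leq K\,\Lip_{\mathsf{d}_X}(a)$, so $\tfrac{2}{K}\cdot\tfrac12\big(\max_X a-\min_X a\big)\leq\Lip_{\mathsf{d}_X}(a)$ and therefore $\Lip^{(\infty,\C_\R)}_{\mathsf{d}_X,K}(a)=\Lip_{\mathsf{d}_X}(a)$. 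Thus on $\mathrm{CMS}_K$ the map $\Gamma^\C$ coincides with the restriction of the map $\Gamma$ of Theorem (\ref{comm-propinquity-thm}), which is a homeomorphism onto its image, and restricting a homeomorphism onto its image to a subspace is again a homeomorphism onto its image, proving (4). Finally, for a compact class $\mathcal{K}\subseteq(\mathrm{CMS},\mathrm{GH})$, the functional $(X,\mathsf{d}_X)\mapsto\diam{X}{\mathsf{d}_X}$ is continuous (indeed $2$-Lipschitz for $\mathrm{GH}$), hence bounded on $\mathcal{K}$; taking $K$ equal to that bound gives $\mathcal{K}\subseteq\mathrm{CMS}_K$, and (1)--(4) for $\mathrm{CMS}_K$ restrict to $\mathcal{K}$.

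The only delicate point — and the reason the scaling constant is $\tfrac{2}{K}$ rather than $\tfrac1K$ — is the interplay between the \emph{oscillation} of a self-adjoint element (controlled by the quotient term) and its Lipschitz constant: in (3) one must carry the precise bound $K/2$ through the McShane extension, using that the extension preserves greatest lower and least upper bounds and not merely the Lipschitz constant, and in (4) one must recognize the quotient seminorm as exactly half the oscillation so that $\diam{X}{\mathsf{d}_X}\leq K$ forces the quotient term to be dominated by the Lipschitz term and $\Lip^{(\infty,\C_\R)}_{\mathsf{d}_X,K}$ collapses to the classical $\Lip_{\mathsf{d}_X}$. Everything else is a routine transcription of the proofs of Proposition (\ref{conv-lip-cqms-prop}) and Theorem (\ref{conv-lip-conv-thm}).
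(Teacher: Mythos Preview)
Your proposal is correct and follows essentially the same route as the paper: (1)--(2) are reduced to the earlier material with the harmless positive rescaling, (3) reruns Theorem (\ref{conv-lip-conv-thm}) with the bound $K/2$ in place of $1$ in the McShane step, and (4) recognizes the quotient term on $\sa{C(X)}$ as half the oscillation and bounds it by $\Lip_{\mathsf{d}_X}$ via $\diam{X}{\mathsf{d}_X}\leq K$. The one small divergence is in the last paragraph: the paper invokes the Gromov Compactness Theorem \cite[Theorem 7.4.15]{burago01} to bound the diameters in a compact class $\mathcal{K}$, whereas you use the more elementary fact that $(X,\mathsf{d}_X)\mapsto\diam{X}{\mathsf{d}_X}$ is $2$-Lipschitz for $\mathrm{GH}$; both are valid, and yours is arguably cleaner since it avoids appealing to the full compactness characterization.
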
 
\begin{proof}
The proof of (1) and (2) follow from the same methods of Section (\ref{cqms-section}).  The proof of (3) follows from the proof of Theorem (\ref{conv-lip-conv-thm}) along with the fact that $K$ is fixed. Thus, (4) remains.

We note that $l^{(\infty, \C_\R)}_{\mathsf{d}_X}=\Lip_{\mathsf{d}_X}$ on $\sa{C(X)}$. Thus, to show $\Lip^{(\infty, \C_\R)}_{\mathsf{d}_X,K}=\Lip_{\mathsf{d}_X}$ on $\sa{C(X)}$, we only need to verify that $\frac{2}{K}\cdot \inf_{r \in \R} \sup_{x \in X} \|f(x)-r1_\C\|_{\infty, \C_\R} \leq \Lip_{\mathsf{d}_X}(f)$ for all $f \in \sa{C(X)}$. However, in this setting,  the quantity $\inf_{r \in \R} \sup_{x \in X} \|(\cdot)(x)-r1_\C\|_{\infty, \C_\R}$ on $\sa{C(X)}$ is simply the quotien norm with  respect $\C1_{C(X)}$ and the supremum norm with respect to absolute value.  
Therefore, for all $ f\in \sa{C(X)}$, we have that:
\begin{equation}\label{quo-lip-ineq}
\frac{2}{K} \cdot \inf_{r \in \R} \sup_{x \in X} \|f(x)-r1_\C\|_{\infty, \C_\R}
= \frac{2}{K} \cdot \frac{|f(x_m)-f(x_M)|}{2},
\end{equation}
where $x_m$ achieves the minumum of $f$ on $X$ and $x_M$ achieves the maximum of $f$ on $X$. Now, if $\diam{X}{\mathsf{d}_X}=0$, then $C(X)=\C$, and so any Lip-norm is the $0$-seminorm on $\sa{C(X)}$, which completes this case.  So, for the remainder of the proof, we assume that $\diam{X}{\mathsf{d}_X}>0.$  Thus:
\begin{equation*}
\begin{split}
\frac{2}{K} \cdot  \frac{|f(x_m)-f(x_M)|}{2} & \leq  \sup_{x,y \in X} \left\{ \frac{|f(x)-f(y)|}{K}\right\} \leq   \sup_{x,y \in X} \left\{ \frac{|f(x)-f(y)|}{\diam{X}{\mathsf{d}_X}}\right\}\\
& =  \sup_{x,y \in X, x \neq y} \left\{ \frac{|f(x)-f(y)|}{\diam{X}{\mathsf{d}_X}}\right\} \\
& \leq  \sup_{x,y \in X, x \neq y} \left\{ \frac{|f(x)-f(y)|}{\mathsf{d}_X(x,y)}\right\} = \Lip_{\mathsf{d}_X}(f).
\end{split}
\end{equation*}
 Thus, it must be the case that $\frac{2}{K} \cdot \inf_{r \in \R} \sup_{x \in X} \|f(x)-r1_\C\|_{\infty, \C_\R} \leq \Lip_{\mathsf{d}_X}(f)$  by Expression (\ref{quo-lip-ineq}),  which completes the proof of (4).

For the remaining statement, we simply note that any compact class of compact metric spaces with respect to the Gromov-Hausdorff distance topology is a subclass of $\mathrm{CMS}_K$ for come $K>0$ by the Gromov Compactness Theorem \cite[Theorem 7.4.15]{burago01}.
\end{proof}
\begin{remark}
Similar conclusions to Proposition (\ref{C(X)-prop}) and the results of Section (\ref{mk-metric-section}) can be made with respect to the Lip-norm $\Lip^{(\infty, \A_\R)}_{\mathsf{d}_X,K}$ of Theorem (\ref{compact-recover-homeo-thm}), but we do not need them here and do not list them for the purpose of presentation.

Furthermore, we note that (1), (2), and (4) of Theorem (\ref{compact-recover-homeo-thm}) would still hold true if $K$ were replaced by $\diam{X}{\mathsf{d}_X}$.  However, for (3), the $K$ is used in a non-trivial yet subtle way since we must compare these spaces in propinquity and if $K$ were allowed to vary then the method used involving Expression (\ref{same-bound-ineq}) would fail.  And, the importance of (3) is to show that we truly are extending  the continuity of the map $\Gamma$ of Theorem (\ref{comm-propinquity-thm}) to the matricial case on the subclass $\mathrm{CMS}_K$.
\end{remark}
\bibliographystyle{plain}
\bibliography{thesis-a}

 \vfill 
\end{document}